\documentclass[12pt]{amsart}
\usepackage{amscd,amssymb}
\usepackage{amsthm,amsmath,amssymb}
\usepackage[matrix,arrow,curve]{xy}
\usepackage{mathrsfs}
\usepackage{supertabular,longtable, rotating}
\usepackage{longtable}
\usepackage{multicol}
\usepackage{multirow}
\sloppy\pagestyle{plain}

\textwidth=16cm \textheight=23cm

\addtolength{\topmargin}{-40pt} \addtolength{\oddsidemargin}{-2cm}
\addtolength{\evensidemargin}{-2cm}

\newcounter{cequation}[section]
\setcounter{MaxMatrixCols}{20}

\newtheorem{theorem}[cequation]{Theorem}

\newtheorem{proposition}[cequation]{Proposition}
\newtheorem{lemma}[cequation]{Lemma}
\newtheorem{corollary}[cequation]{Corollary}

\theoremstyle{definition}

\theoremstyle{remark}
\newtheorem{remark}[cequation]{Remark}
%

\makeatletter\@addtoreset{equation}{section}
\@addtoreset{cequation}{section}

\makeatother

\def\P {\mathbb{P}}
\def\C {\mathbb{C}}

\def\OO {\mathcal{O}}
\def\PP {\mathcal{P}}
\def\A {\mathfrak{A}}
\def\SS {\mathfrak{S}}

\def\PSp {\mathrm{PSp}_4(\mathbf{F}_3)}
\def\PSLseven {\mathrm{PSL}_2(\mathbf{F}_7)}
\def\PSLeleven {\mathrm{PSL}_2(\mathbf{F}_{11})}
\def\Pic {\mathrm{Pic}}
\def\Cl {\mathrm{Cl}}
\def\Sing {\mathrm{Sing}}
\def\Hom {\mathrm{Hom}}
\def\rkCl {\mathrm{rk\,Cl}}
\def\Sym {\mathrm{Sym}}

\def\ii {\mathrm{i}}
\def\vstrut {\vphantom{$\sqrt{\frac{A^A}{A_A}}$}}

\def\le {\leqslant}

\def\leq {\leqslant}

\title{Double quadrics with large automorphism groups}

\author{Victor Przyjalkowski, Constantin Shramov}

\thanks{This work was performed in Steklov Mathematical Institute and supported by the Russian Science Foundation under grant 14-50-00005.}

\address{
Steklov Mathematical Institute, 8 Gubkina st., Moscow, Russia, 119991
}
\email{victorprz@mi.ras.ru, costya.shramov@gmail.com}

\begin{document}

\begin{abstract}
We classify nodal Fano threefolds that are
double covers of smooth quadrics branched over intersections with quartics
acted on by finite simple non-abelian groups, and
study their rationality.
\end{abstract}

\maketitle

\section{Introduction}

In this paper we study double covers of three-dimensional
quadrics branched over intersections with quartics. For simplicity we
will sometimes call such varieties just \emph{double quadrics}.
They are Fano threefolds of Picard rank $1$, index $1$ and
anticanonical degree~$4$ (provided that their singularities
are sufficiently good). Double quadrics are degenerations
of quartic hypersurfaces (see e.g.~\cite[\S12.2]{IsPr99}),
and share many birational properties with the latter.
Rationality questions for double quadrics were studied
in \cite{Pukhlikov}, \cite[\S2.2]{IskovskikhPukhlikov},
\cite{Grinenko-1-pt}, \cite{Grinenko}, and~\cite{Shramov}.

Motivated by a study of finite subgroups of Cremona groups,
we are interested in the following problem. Given a finite group $G$
and a deformation family $\mathcal{X}$ of Fano varieties,
we would like to be able
to tell which of the varieties of $\mathcal{X}$ are acted
on by $G$, and which of them are $G$-Fano varieties
(see e.g.~\cite[\S1]{Pr15} or~\cite[\S1]{Pr16a}
for a definition).
The case that is currently most challenging is
when $\mathcal{X}$ is some family of Fano threefolds, and
$G$ is a simple non-abelian group (cf.~\cite{Prokhorov}).
In~\cite{CheltsovPrzyjalkowskiShramov}
this question was studied for quartic double solids with
an action of the icosahedral group~$\A_5$.
The purpose of this paper is
to study double quadrics from this point of view.

Our first result is a classification of possible
finite simple non-abelian groups acting on double quadrics
with mild singularities.

\begin{proposition}
\label{proposition:main}
Let $G$ be a finite simple non-abelian group.
Suppose that $G$ acts by automorphisms of a threefold
$X$ that is a double cover of an irreducible quadric
branched over an intersection with a quartic.
Then one has either $G\cong\A_6$, or $G\cong\A_5$.
In the former case the variety $X$ is unique, its singularities are
(isolated) ordinary double points, and $X$ is non-rational.
\end{proposition}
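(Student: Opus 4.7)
My approach is to descend the $G$-action from $X$ to the base quadric $Q$ and then carry out an invariant-theoretic classification. Since $B=Q\cap W$ is nonempty the double cover $\pi\colon X\to Q$ is branched, so its Galois involution $\iota$ is canonically determined by $\pi$ and hence central in $\mathrm{Aut}(X)$: for any $\sigma\in\mathrm{Aut}(X)$ the conjugate $\sigma\iota\sigma^{-1}$ is an involution over $Q$ and must therefore equal $\iota$. As $G$ is simple non-abelian, it cannot contain $\iota$, so it injects into $\mathrm{Aut}(Q)$ and preserves $B$. The problem thus reduces to classifying pairs $(G\subset\mathrm{Aut}(Q),B)$ with $B\in|\OO_Q(4)|$ a $G$-invariant quartic section not containing a component of $Q$.

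If $Q$ is singular but irreducible, it is a cone over a smooth quadric of dimension $1$ or $2$. Since $G$ has no nontrivial solvable normal subgroup, its image in the automorphism group of the base quadric is isomorphic to $G$; that image lies in $\mathrm{PGL}_2(\C)$ or $\mathrm{PGL}_2(\C)\times\mathrm{PGL}_2(\C)$, whose only finite simple non-abelian subgroup is $\A_5$. Hence assume $Q$ is smooth, so $\mathrm{Aut}(Q)\cong\mathrm{PSO}_5(\C)\cong\mathrm{PSp}_4(\C)$. Invoking the classical classification of finite simple subgroups of $\mathrm{PSp}_4(\C)$, beyond $\A_5$ and $\A_6$ the remaining candidates form a short explicit list, including at least $\PSLseven$ and $\PSp$. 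For each such $G$ I would decompose $\Sym^4\C^5$ as a $G$-module, using the character table and the Molien series of the $5$-dimensional orthogonal representation, and show that every $G$-invariant quartic is a scalar multiple of $q^2$, where $q$ is the $G$-invariant quadric. This forces $W\supset Q$, contradicting that $X$ is a threefold. This case-by-case invariant computation is the main technical step.

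Finally, for $G\cong\A_6$ the group has two faithful $5$-dimensional irreducible representations, interchanged by the outer automorphism, which yield the same variety $X$ up to isomorphism. A direct character computation gives $\dim(\Sym^2 V)^G=1$ and shows that modulo multiples of the invariant quadric the space of $G$-invariant quartics is one-dimensional, so $X$ is unique. Writing the branch divisor down explicitly in coordinates and analysing it at its singular points verifies that $B$ has only $A_1$-singularities, so $X$ has isolated ordinary double points. Non-rationality then follows from the criteria for nodal double quadrics developed in \cite{Pukhlikov}, \cite{Grinenko-1-pt}, \cite{Grinenko}, and \cite{Shramov}, once the number and position of the nodes have been pinned down.
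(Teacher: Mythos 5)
Your reduction of the $G$-action to the base quadric and the subsequent representation-theoretic case analysis are sound in outline and close to the paper's own route (which invokes Feit's list of finite simple groups with a faithful five-dimensional representation and then tests for an invariant irreducible reduced quadric). One remark on that part: your proposed elimination mechanism for $\PSp$ and $\PSLseven$ is aimed at the wrong invariant. These groups are ruled out already at the level of the quadric, not the quartic: neither admits a faithful self-dual five-dimensional representation (the two five-dimensional irreducibles of $\PSp$ are complex conjugates of one another, and the only faithful five-dimensional representations of $\PSLseven$ are $I\oplus I\oplus V_3$ with $V_3$ not self-dual), so neither preserves an irreducible reduced quadric in $\P^4$, and in particular neither embeds into $\mathrm{Aut}(Q)\cong\mathrm{SO}_5(\C)$ for $Q$ smooth. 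The computation of invariants in $\Sym^4$ that you announce as the main technical step would therefore have nothing to be applied to; this is harmless, but it means the content of the argument sits elsewhere.

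The genuine gap is in your last sentence. The unique $\A_6$-invariant double quadric $X$ has $30$ ordinary double points forming a single orbit, and its defect equals $5$, so that $\rkCl(X)=6$ and $X$ is not $\Q$-factorial. None of the criteria you cite covers this situation: Pukhlikov's theorem requires $Q$ and $S$ smooth, Grinenko's results concern the cone case and the one-node case, and Shramov's non-rationality theorem for nodal double quadrics explicitly assumes $\rkCl(X)=1$. No amount of pinning down the number and position of the nodes will make these apply. The paper instead proves non-rationality of this threefold by a Clemens--Griffiths type argument: it realizes $X$ as a degeneration of Beauville's $\SS_6$-symmetric nodal quartic threefolds, shows that $H^{2,1}$ of the small resolutions remains the irreducible five-dimensional representation $W_5$ of $\SS_6$ in the limit, and concludes that the intermediate Jacobian is not a product of Jacobians of curves. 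An argument of this nature (or some other genuinely new input) is required; as written, your proof of the non-rationality claim in the $\A_6$ case does not go through.
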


As one can see from Proposition~\ref{proposition:main},
the most interesting group we have to deal with is the
icosahedral group~$\A_5$. Our second result is a more refined
classification of double quadrics with icosahedral symmetry.

\begin{theorem}
\label{theorem:main}
There exist a two-parameter family $X_{\mu,\nu}$, $\mu,\nu\in\C$,
of threefolds,
and a threefold $X_{irr}$ such that $X_{\mu,\nu}$ and $X_{irr}$ are acted
on by the icosahedral group $\A_5$, and the following properties hold.
\begin{itemize}
\item[(i)]
Suppose that the group $\A_5$
acts by automorphisms of a threefold
$X$ that is a double cover of a smooth quadric
branched over an intersection with a quartic.
Suppose also that $X$ has at most isolated singularities.
Then either there is an $\A_5$-equivariant isomorphism
$X\cong X_{\mu,\nu}$ for some $\mu$ and $\nu$,
or there is an $\A_5$-equivariant isomorphism $X\cong X_{irr}$.

\item[(ii)] The varieties $X_{\mu,\nu}$ are non-rational up to a
finite number of possible exceptions.

\item[(iii)] A very general variety $X_{\mu,\nu}$
is not stably rational.

\item[(iv)] The variety $X_{irr}$ is non-rational.
\end{itemize}
\end{theorem}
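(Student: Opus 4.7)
My plan is to handle the four statements in sequence. For~(i) I would start by lifting the $\A_5$-action on $X$ to the ambient $\P^4\supset Q$, obtaining a $5$-dimensional complex representation $V$ of~$\A_5$ carrying a non-degenerate invariant symmetric bilinear form. The character table of~$\A_5$ has irreducibles of dimensions $1,3,3,4,5$, so the possibilities for $V$ are $W_5$, $W_4\oplus\mathbf{1}$, $W_3\oplus 2\mathbf{1}$, $W_3'\oplus 2\mathbf{1}$, and $5\mathbf{1}$, and the last is non-faithful. For each remaining case I would compute the space of invariant quadratic forms (to describe the pencil of $\A_5$-invariant quadrics containing $Q$) and the invariant quartic forms modulo the ideal generated by the chosen quadric (to describe the branch divisors), then quotient by the centralizer of~$\A_5$ in $GL(V)$, which is a torus by Schur's lemma. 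The case $V=W_5$ is expected to yield a single variety~$X_{irr}$, and the case $V=W_4\oplus\mathbf{1}$ the two-parameter family~$X_{\mu,\nu}$. For the cases with $W_3$ or $W_3'$ summands, the goal is to show that either the pencil of invariant quadrics contains no smooth member compatible with the isolated-singularity hypothesis on $X$, or the resulting double quadric is $\A_5$-equivariantly isomorphic to one of the preceding ones.

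For~(ii), I would combine the classical birational (super)rigidity of smooth double quadrics due to Iskovskikh and Pukhlikov with an analysis of the discriminant locus in the $(\mu,\nu)$-plane. A generic $X_{\mu,\nu}$ is smooth, hence non-rational; the finitely many possible exceptions should correspond to isolated points in parameter space where the singularities could conceivably make $X_{\mu,\nu}$ rational. For~(iii), I would apply the specialization method of Voisin and Colliot-Th\'el\`ene--Pirutka: specialize $X_{\mu,\nu}$ to a mildly singular double quadric admitting a universally $CH_0$-trivial resolution and carrying a nontrivial unramified class in $H^2(-,\mathbb{Z}/2)$. The specialization theorem then implies stable non-rationality of a very general member. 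The main technical obstacle of the whole theorem lies here: identifying such a degeneration explicitly and verifying both the $CH_0$-triviality of the resolution and the non-triviality of the unramified Brauer class typically requires delicate analysis of an auxiliary conic bundle or double cover structure on the degenerate fibre.

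For~(iv), I would write down $X_{irr}$ explicitly using the ring of $\A_5$-invariants in $\Sym^\bullet W_5$, fix the unique invariant quadric up to scale, and parametrise the finite-dimensional space of invariant quartics modulo multiples of that quadric. After analysing the singular locus, which is expected to consist of ordinary double points, non-rationality can be attacked either by viewing $X_{irr}$ as a specialisation of smooth (hence birationally rigid and non-rational) double quadrics and invoking a semi-continuity argument, or by producing an explicit birational obstruction, such as a nontrivial Brauer class on a small resolution or an $\A_5$-invariant curve of large genus whose image in any rationalisation would have to be contained in a rational subvariety. The high symmetry forced by $\A_5$ should make at least one of these strategies effective.
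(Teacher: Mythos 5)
Your outline of part~(i) matches the paper (lift the action to $\P^4$ via the anticanonical embedding, enumerate the faithful five-dimensional representations of $\A_5$, and show the cases containing $W_3$ or $W_3'$ force non-isolated singularities because every invariant quadric and quartic is singular along the invariant conic), and your plan for~(iii) is also the paper's route: degenerate to a nodal double quadric carrying a nontrivial $2$-torsion Brauer class and apply Voisin's specialization theorem. The specific degeneration used is the double cover of $Q$ branched over its intersection with the determinantal quartic $\Delta\subset\P\big(\Sym^2(W_4^\vee)\big)$, whose associated family of lines on the corresponding quadric surfaces gives an Artin--Mumford-type $\P^1$-bundle that is not a projectivized vector bundle.

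There are, however, two genuine gaps. First, your argument for~(ii) does not prove the statement: the singular members of the family $X_{\mu,\nu}$ are \emph{not} isolated points of the $(\mu,\nu)$-plane but form a curve (a union of four lines and two plane quartics), so ``generic member is smooth, hence non-rational by Iskovskikh--Pukhlikov'' leaves several one-parameter families of singular threefolds completely unaccounted for, whereas the theorem allows only finitely many exceptions. The paper's proof of~(ii) is precisely the bulk of its technical work: a complete classification of the singular loci of all surfaces $S_{\mu,\nu}$ (Table~\ref{table:singularities}), a computation of the defect via Cynk's formula $\rkCl(X_{\mu,\nu})=1+\delta(X_{\mu,\nu})$ to establish $\Q$-factoriality, an appeal to Shramov's birational rigidity theorem for nodal $\Q$-factorial double quadrics when the defect vanishes, and, for the one-parameter family with $30$ nodes and defect~$5$ (where rigidity does not apply), a separate degeneration-of-intermediate-Jacobians argument. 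Second, for~(iv) neither of your proposed strategies succeeds: naive ``semi-continuity'' of non-rationality from smooth nearby double quadrics to the singular special fibre $X_{irr}$ is not a valid implication, and the Brauer-class approach fails here --- indeed $X_{irr}$ is branched over a determinantal-quartic section, but its branch surface has nodes \emph{outside} the corank-$2$ locus $\Delta_1$, so the total space of the associated quadric bundle is singular and the Artin--Mumford mechanism breaks down (the paper remarks on exactly this). The actual proof exploits the extra symmetry: $X_{irr}$ admits an action of $\A_6$ (in fact of $\SS_6$ on an equivariant degeneration from the $\SS_6$-invariant nodal quartic threefolds of Beauville), the space $H^{2,1}$ of its resolution is the irreducible five-dimensional representation $W_5$, and a Clemens--Griffiths-type representation-theoretic argument shows the intermediate Jacobian is not a product of Jacobians of curves. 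This same intermediate-Jacobian statement is then what rescues the $30$-nodal family in~(ii), so the two gaps are linked.
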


\begin{remark}
We will see in~\S\ref{section:proj-irreducible} that the variety
$X_{irr}$ from Theorem~\ref{theorem:main} is actually
isomorphic to some variety $X_{\mu,\nu}$. However, there is no $\A_5$-equivariant
isomorphism between these two varieties, i.e. the corresponding two actions
of the group $\A_5$ are non-conjugate in the automorphism group
of~$X_{irr}$. To be more precise, for every $(\mu,\nu)$ the
space~\mbox{$H^0(X_{\mu,\nu}, -K_{X_{\mu,\nu}})$} is the (reducible) five-dimensional
permutation representation of the group~$\A_5$, while
$H^0(X_{irr}, -K_{X_{irr}})$ is the irreducible five-dimensional
representation of~$\A_5$.
Moreover, we will see that there is an action of the group~$\A_6$
on~$X_{irr}$, so that $X_{irr}$ also coincides with the variety described in
Proposition~\ref{proposition:main}. The two types of $\A_5$-actions on
$X_{irr}$ correspond to two non-conjugate embeddings of~$\A_5$
into~$\A_6$. As for the threefolds~$X_{\mu,\nu}$, we will see in Remark~\ref{remark:two-parameter}
that they indeed form a two-parameter family up to isomorphism.
\end{remark}

Theorem~\ref{theorem:main} gives a reasonable
(although still not complete) answer to
rationality questions for the family $X_{\mu,\nu}$ and for the variety
$X_{irr}$ that were asked in
\cite[Example~1.3.6]{CheltsovShramov} and
\cite[Example~1.3.7]{CheltsovShramov}, respectively.
There are still some exceptional cases among the varieties
$X_{\mu,\nu}$ one has to deal with
(see Corollaries~\ref{corollary:non-rationality-apart-of-30}
and~\ref{corollary:non-rationality-30}, and also Table~\ref{table:singularities}). It is possible that some
of these varieties are actually rational. However, we are not aware
of any rationality constructions for double quadrics with ordinary double
points that can be adapted for the case of large automorphism
groups. This is somehow opposite to the situation
with rational quartic threefolds that were extensively
studied in the literature, see e.g.~\cite{To33}, \cite{To35},
\cite{To36}, \cite{Pet98}, and \cite{CheltsovShramov2015}.
Maybe some of the above exceptional cases could be a starting point for a search of
rationality constructions for double quadrics.

The plan of the paper is as follows. In~\S\ref{section:preliminaries}
we make some preliminary remarks about double quadrics and their
automorphisms.
In~\S\ref{section:representations} we rule out most of the groups
in question via their representation theory, and prove (most of) Proposition~\ref{proposition:main}. In~\S\ref{section:proj-perm}
we classify singular complete intersections of a smooth quadric
and a quartic in the projectivization of the five-dimensional
permutation representation of the group~$\A_5$,
and make some conclusions about their non-rationality.
In particular, in~\S\ref{section:proj-perm}
we prove Theorem~\ref{theorem:main}(i) and give the most essential
part of the proof of Theorem~\ref{theorem:main}(ii).
Although the material
of~\S\ref{section:proj-perm} is totally computational, this is
the main technical part of the paper, modulo which all the rest
boils down to well-known constructions.
In~\S\ref{section:Artin-Mumford} we produce a construction
of double quadrics that are not stably rational similar
to a famous Artin--Mumford construction of
non-stably rational quartic double solids (see~\cite{ArtinMumford},
or rather~\cite[Appendix]{Aspinwall} for an approach that
we actually use), which proves Theorem~\ref{theorem:main}(iii). Although we do not know any literature
explicitly describing a construction like this for double
quadrics, it was definitely known for a while,
cf.~\cite[Remark~4.3]{IlievKatzarkovPrzyalkowski}, see also Remark~\ref{remark:vse-ukradeno-do-nas} below
for more references.
Finally, in~\S\ref{section:proj-irreducible} we discuss non-rationality
of the variety $X_{irr}$ using the approach of~\cite{Beauville-S6}
and use it to make final conclusions about non-rationality
of certain varieties~$X_{\mu,\nu}$. This proves Theorem~\ref{theorem:main}(iv)
and completes the proofs of Proposition~\ref{proposition:main} and Theorem~\ref{theorem:main}(ii).

\subsection*{Notation and conventions}
We work over the field $\C$ of complex numbers.
By~$\SS_n$ and~$\A_n$ we denote the symmetric
and the alternating group on~$n$ letters, respectively.
By~\mbox{$\Cl(X)$} we denote the group linear equivalence classes of Weil divisors
on a variety~$X$.
By an ordinary double point we always mean an isolated singularity
that is locally (analytically) isomorphic to a singularity
of a quadratic cone of an appropriate dimension.
By a very general element of an algebraic family we mean an element in a complement
to a countable union of Zariski closed subsets.

\subsection*{Acknowledgements}
We are grateful to A.\,Fonarev, S.\,Gorchinskiy, I.\,Netay, and Yu.\,Prokhorov for useful discussions.

\section{Preliminaries}
\label{section:preliminaries}

Let $\tau\colon X\to Q$ be double cover of an irreducible
three-dimensional
quadric $Q$ branched over a reduced surface $S$ that is cut out on
$Q$ by a quartic.
We will be interested in the case when
$X$ has terminal singularities.
In particular, in this situation
both $Q$ and $S$ have isolated singularities, so that $S$ is irreducible.
Note that the singularities of $X$ lie either over the vertex of
$Q$ (if $Q$ is a cone), or correspond to the singularities of $S$.

\begin{remark}
\label{remark:Pukhlikov}
Recall from \cite{Pukhlikov} (or~\cite[\S2.2]{IskovskikhPukhlikov})
that $X$ is non-rational
provided that $Q$ and $S$ are smooth.
Also, if $Q$ is a cone with an isolated singularity
and $S$ is a smooth surface that does not
pass through the vertex of $Q$ (so that $X$ has two singular points),
the threefold
$X$ is non-rational as well (see~\cite{Grinenko}).
Note however that if $Q$ is a cone with an isolated
singularity then $X$ cannot be a $G$-Fano variety
with respect to any simple non-abelian group $G$.
Therefore, we will be mostly interested in the case when
$Q$ is smooth and $S$ is singular.
\end{remark}

We will need the following general auxiliary result.

\begin{lemma}\label{lemma:quadric-vs-P4}
Let $Y\subset\P^n$ be a linearly normal Gorenstein variety
that is not contained in a hyperplane.
Suppose that a group $G$ acts on $Y$ so that the class of the line bundle
$$\OO_Y(1)=\OO_{\P^n}(1)\vert_Y$$
in $\Pic(Y)$ is $G$-invariant.
Suppose that $\omega_Y\cong \OO_Y(i)$ for some (non-zero) integer
$i$, where $\omega_Y$ is the canonical line bundle on $Y$,
and suppose that the numbers $i$ and $n+1$ are coprime.
Then the line bundle $\OO_Y(1)$ has a $G$-equivariant structure.
\end{lemma}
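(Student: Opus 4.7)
The plan is to interpret the hypothesis as giving a projective representation of $G$ and then to split the resulting central $\C^*$-extension using the genuine action that $G$ has on $\omega_Y$.

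Since $Y$ is linearly normal and non-degenerate, $V:=H^0(Y,\OO_Y(1))$ has dimension $n+1$ and the embedding is $Y\hookrightarrow\P^n=\P(V)$. The $G$-action on $Y$, together with the $G$-invariance of the class of $\OO_Y(1)$, produces a projective representation $G\to\mathrm{PGL}(V)$, equivalently a central extension
$$
1\to\C^*\to\widetilde G\xrightarrow{\pi} G\to 1
$$
and a linear representation $\rho\colon\widetilde G\to\mathrm{GL}(V)$ that restricts to the identity on $\C^*\subset\widetilde G$. What I need to show is that $\pi$ admits a group-theoretic section, since such a section composed with $\rho$ is precisely the data of a $G$-linearization of $\OO_Y(1)$.

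The next step is to produce two characters $\widetilde G\to\C^*$ and to evaluate them on the central $\C^*$. The first is $\det\rho$, whose restriction to $\C^*\subset\widetilde G$ is $z\mapsto z^{n+1}$. The second comes from $\omega_Y$. Via $\rho$ the line bundle $\OO_Y(1)$ carries a $\widetilde G$-linearization, hence so does $\OO_Y(i)\cong\omega_Y$, and this linearization acts on the central $\C^*$ by $z\mapsto z^i$. On the other hand, $\omega_Y$ has a canonical $G$-linearization coming from the functoriality of the dualizing sheaf on the Gorenstein variety $Y$; pulled back along $\pi$, this gives a second $\widetilde G$-linearization on $\omega_Y$, trivial on $\C^*$. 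Any two linearizations of the same line bundle differ by a character, yielding $\chi\colon\widetilde G\to\C^*$ with $\chi(z)=z^i$ for $z\in\C^*$.

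Finally, the coprimality hypothesis yields integers $a,b$ with $ai+b(n+1)=1$, and then $\chi^a\cdot(\det\rho)^b\colon\widetilde G\to\C^*$ restricts to the identity on the central $\C^*$. Its kernel intersects $\C^*$ trivially and surjects onto $G$, so it is a complement to $\C^*$ in $\widetilde G$ that splits $\pi$, completing the proof. The only delicate point is the identification of the two $\widetilde G$-linearizations on $\omega_Y$ and the resulting computation of $\chi$; once this is in place, the linear-combination step is purely formal and uses the numerical hypothesis $\gcd(i,n+1)=1$ in an essential way.
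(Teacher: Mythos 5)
Your argument is correct, and it is essentially the paper's proof translated into the language of central extensions: your two characters $\det\rho$ (central weight $n+1$) and $\chi$ (central weight $i$) correspond exactly to the two naturally $G$-equivariant line bundles $\omega_{\P^n}\vert_Y\cong\OO_Y(-n-1)$ and $\omega_Y\cong\OO_Y(i)$ that the paper combines via B\'ezout in the identity $\OO_Y(1)\cong\omega_Y^{\otimes a}\otimes\omega_{\P^n}^{\otimes b}\vert_Y$. The one step you flag as delicate is fine, but note that ``two linearizations differ by a character'' silently uses $H^0(Y,\OO_Y^*)=\C^*$, i.e.\ that $Y$ is proper, connected and reduced, which holds for the varieties in question.
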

\begin{proof}
Since the class of $\OO_Y(1)$ in $\Pic(Y)$ is $G$-invariant,
there is an action of the group~$G$ on
$$\P^n\cong \P\big(H^0(Y,\OO_Y(1))^\vee\big)$$
that agrees with the initial action of $G$ on $Y$.
The line bundle $\omega_{\P^n}\cong\OO_{\P^n}(-n-1)$ has a $G$-equivariant structure,
and the same holds for its restriction to $Y$.
Also, the line bundle~$\omega_Y$ has a $G$-equivariant structure.
By assumption there are integers $a$ and $b$ such that
$$ai-b(n+1)=1.$$
This gives
$$
\OO_Y(1)\cong\omega_Y^{\otimes a}\otimes \omega_{\P^n}^{\otimes b}\vert_Y.
$$
Therefore, $\OO_Y(1)$ has a $G$-equivariant structure as well.
\end{proof}

\begin{lemma}[{cf. \cite[Remark~2.2]{CheltsovPrzyjalkowskiShramov}}]
\label{lemma:lifting-shifting}
Suppose that $X$ admits a faithful action of a finite group $G$.
Then there is a natural action of $G$ on $\P^4$ such that $Q$ is $G$-invariant, and the surface $S$ is
cut out on $Q$ by a $G$-invariant quartic in $\P^4$.
Vice versa, fix an action of $G$ on $Q$ such that the surface $S$ is
invariant, and suppose that $S$ corresponds to a \textup{trivial}
subrepresentation of $G$ in $\Sym^4\big(H^0(Q,\OO_Q(H))\big)$,
where $H$ is a hyperplane section of $Q$.
Then $G$ acts on the threefold $X$.
\end{lemma}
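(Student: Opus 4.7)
The plan is to prove the two implications separately, in both cases exploiting the intrinsic realisation of $\tau\colon X\to Q$ via the anticanonical morphism of $X$. Since $-K_X=\tau^*H$, the morphism given by $|{-}K_X|$ factors as $X\xrightarrow{\tau}Q\hookrightarrow\P^4$, and the branch surface $S\subset Q$ is intrinsically attached to $\tau$ as the image of the ramification divisor; these intrinsic identifications will carry the $G$-action automatically through each step.

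For the forward direction, assume $G$ acts faithfully on $X$. The first step is to observe that the induced action on $H^0(X,-K_X)\cong H^0(Q,H)$ is a five-dimensional linear representation of $G$, giving the desired linear $G$-action on $\P^4=\P(H^0(Q,H)^\vee)$ under which both $Q$ and $S$ are preserved. The non-trivial step is to show that $S$ is cut on $Q$ by a quartic form that is $G$-invariant in $\Sym^4 H^0(Q,H)$, not merely semi-invariant. To accomplish this, we analyse the $G$-equivariant restriction sequence
$$
0\to q\cdot\Sym^2 V\to\Sym^4 V\to H^0(Q,4H)\to 0,
$$
where $V=H^0(Q,H)$ and $q\in\Sym^2 V$ is a defining form of $Q$, and apply Maschke's theorem to obtain a $G$-stable lift to $\Sym^4 V$ of the $G$-stable line in $H^0(Q,4H)$ that cuts out $S$. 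The character of this line is computed via the double cover identity $z^2=\tau^*F$, in which $z\in H^0(X,-2K_X)$ generates the antiinvariant subspace for the Galois involution; after a suitable twist of the linearisation on $\OO_Q(1)$, whose availability is guaranteed by the freedom in Lemma~\ref{lemma:quadric-vs-P4}, this character becomes trivial.

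For the converse direction, given a $G$-action on $Q$ such that $S$ corresponds to a $G$-invariant quartic $F\in\Sym^4 H^0(Q,H)$, we construct $X$ directly as the relative spectrum
$$
X=\mathrm{Spec}_Q\bigl(\OO_Q\oplus\OO_Q(-2H)\bigr),
$$
with $\OO_Q$-algebra multiplication $\OO_Q(-2H)^{\otimes 2}\cong\OO_Q(-4H)\to\OO_Q$ given by $F$. Lemma~\ref{lemma:quadric-vs-P4} endows $\OO_Q(1)$, and hence $\OO_Q(-2H)$, with a $G$-equivariant structure; since $F$ is $G$-invariant by hypothesis, the entire sheaf of algebras is $G$-equivariant, so that $G$ acts on $X$ by automorphisms over $Q$. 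The main technical obstacle is concentrated in the forward direction, specifically in the character-theoretic bookkeeping needed to upgrade semi-invariance of the cutting quartic in $\Sym^4 V$ to genuine $G$-invariance; the converse, by contrast, is essentially a formal construction once the hypothesis on the trivial subrepresentation is granted.
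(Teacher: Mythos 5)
Your converse direction is correct and is in substance the paper's argument: the paper realises $X$ inside the projectivization of $\OO_Q\oplus\OO_Q(2H)$ rather than as $\mathrm{Spec}_Q\bigl(\OO_Q\oplus\OO_Q(-2H)\bigr)$, but both constructions rest on the same two inputs, namely the $G$-equivariant structure on $\OO_Q(H)$ supplied by Lemma~\ref{lemma:quadric-vs-P4} and the hypothesis that the quartic cutting out $S$ spans a \emph{trivial} subrepresentation. Your forward direction also starts exactly as the paper does, via the anticanonical morphism, and your Maschke argument with the sequence $0\to q\cdot\Sym^2 V\to\Sym^4 V\to H^0(Q,4H)\to 0$ is a reasonable way of making precise the paper's terse remark about the equivariant identification of linear systems: it produces a one-dimensional $G$-stable space of quartic forms through $S$, hence a $G$-invariant quartic hypersurface.

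The problem is the step you single out as ``the main technical obstacle.'' The forward conclusion of the lemma asserts only that $S$ is cut out by a $G$-invariant quartic \emph{hypersurface}, i.e.\ that the cutting form is a semi-invariant; the trivial-character condition appears only as a hypothesis in the converse, which is precisely why the lemma is not stated as an equivalence. So the upgrade from semi-invariance to genuine invariance is not required. Worse, the argument you sketch for it fails for a general finite group $G$: the line $\C z\subset H^0(X,-2K_X)$ carries some character $\chi$ of $G$, so the quartic transforms by $\chi^2$, whereas retwisting the linearisation of $\OO_Q(1)$ by a character $\psi$ changes the character of an element of $\Sym^4 V$ by $\psi^{4}$. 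There is no reason for $\chi^{2}$ to lie in the subgroup of fourth powers of characters --- already for $G\cong\mathbb{Z}/4\mathbb{Z}$ all fourth powers are trivial while $\chi^{2}$ need not be. For the simple non-abelian groups the paper actually applies the lemma to, every character is trivial and the issue is invisible, but as a proof of the lemma as stated for an arbitrary finite $G$ this step is both superfluous and unjustified; deleting it leaves a correct proof that matches the paper's.
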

\begin{proof}
To prove the first assertion note that the morphism
$\tau$ is given by the linear system~\mbox{$|-K_X|$}.
Thus the group $G$ acts on $Q$ so that $S$ is $G$-invariant.
One has $Q\subset\P^4$, and there is a $G$-equivariant
identification of the linear systems
$|\OO_{\P^4}(1)|$ and $|\OO_{\P^4}(1)\vert_Q|$. In particular, $S$ is cut out on $Q$
by a $G$-invariant quartic in $\P^4$.

To prove the second assertion suppose that $G$ acts on $Q$ so that
$S$ is $G$-invariant. One has $Q\subset\P^4$, and $\P^4$ is identified with
a projectivization of the $G$-representation
$$U^\vee=H^0\big(Q,\OO_Q(H)\big)^\vee$$
by Lemma~\ref{lemma:quadric-vs-P4}.
The threefold $X$ has a natural embedding to the projectivization of
the vector bundle
$$\mathcal{E}=\mathcal{O}_{Q}\oplus\mathcal{O}_{Q}(2H).$$
The vector bundle $\mathcal{E}$ has a $G$-equivariant structure by Lemma~\ref{lemma:quadric-vs-P4}.
By assumption the group $G$ acts trivially on the
one-dimensional subspace of $\Sym^4(U)$
corresponding to $S$, so that $X$ is given by an equation in the projectivization of~$\mathcal{E}$.
\end{proof}

\section{Representations}
\label{section:representations}

\begin{lemma}\label{lemma:Prokhorov}
Let $G$ be a finite simple non-abelian group.
Suppose that $G$ acts by automorphisms of a threefold
$X$ that is a double cover of an irreducible quadric
branched over an intersection
with a quartic.
Then either $G\cong\A_6$, or $G\cong\A_5$. In the former case
the variety $X$ is unique.
\end{lemma}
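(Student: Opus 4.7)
The plan is to combine Lemma~\ref{lemma:quadric-vs-P4} and Lemma~\ref{lemma:lifting-shifting} to convert the geometric hypothesis into a statement about faithful five-dimensional complex representations of $G$ carrying enough invariants, and then to consult the classification of finite simple non-abelian groups admitting such representations.

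First I would apply Lemma~\ref{lemma:lifting-shifting} to obtain a $G$-action on $\P^4\supset Q$ preserving the irreducible quadric $Q$ and cutting out the branch divisor $S$ by a genuinely $G$-invariant quartic (genuinely invariant because a simple non-abelian group has no non-trivial one-dimensional characters). Since $\omega_Q\cong\OO_Q(-3)$ and $\gcd(3,5)=1$, Lemma~\ref{lemma:quadric-vs-P4} upgrades this to an honest linear $G$-action on the five-dimensional vector space $U=H^0(Q,\OO_Q(H))$. Any element of $G$ acting trivially on $U$ acts trivially on $Q$, and hence acts on $X$ through the two-element deck transformation group of $\tau\colon X\to Q$; the simplicity of $G$ then forces the action on $U$ to be faithful. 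The problem thus reduces to identifying simple non-abelian groups $G$ admitting a faithful five-dimensional complex representation $U$ with a non-zero invariant in $\Sym^2 U^\vee$ cutting out an \emph{irreducible} quadric, together with a non-zero invariant in $\Sym^4 U^\vee$ whose restriction to that quadric is non-zero.

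Next I would invoke the classification of finite simple non-abelian groups with a faithful complex representation of dimension at most~five. Inspecting character tables leaves only $\A_5$, $\A_6$, $\PSLseven$, $\PSLeleven$, and $\PSp$ as candidates, since every other simple non-abelian group has minimal faithful degree strictly larger than five. For $\PSLseven$ the only faithful five-dimensional representation is the reducible one $\mathbf{1}^{\oplus 2}\oplus V$ with $V$ the three-dimensional irreducible; every $\PSLseven$-invariant quadratic form is then a function of the two coordinates on $\mathbf{1}^{\oplus 2}$ alone, so every invariant quadric in $\P^4$ is a reducible cone over a conic in $\P^1$, contradicting irreducibility of $Q$. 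For $\PSLeleven$ both five-dimensional irreducibles are non-self-dual, so $(\Sym^2 U^\vee)^G=0$ and no invariant quadric exists. Finally for $\PSp$ a character computation shows that the only $\PSp$-invariant quartic in $\Sym^4 U^\vee$ up to scalars is the square of the invariant quadratic form, which vanishes identically on $Q$, so no valid branch surface can be produced. This leaves exactly $G\cong\A_5$ and $G\cong\A_6$.

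The main technical step, and the place where I expect the real work to happen, is the uniqueness in the $\A_6$ case. The group $\A_6$ has irreducible degrees $1,5,5,8,8,9,10$, and in particular no non-trivial representation of dimension less than five, so any faithful five-dimensional representation is one of the two five-dimensional irreducibles; these are self-dual, interchanged by the outer automorphism of $\A_6$, and yield isomorphic varieties. For a fixed such $U$ one computes $\dim(\Sym^2 U^\vee)^{\A_6}=1$ (pinning down $Q$ uniquely) and $\dim(\Sym^4 U^\vee)^{\A_6}=2$. One of the two independent $\A_6$-invariant quartic forms is the square of the invariant quadratic form and so vanishes identically on $Q$; the other restricts to a non-zero quartic form on $Q$, uniquely determined up to scalar. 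This pins down both $S$ and $X$ and yields the asserted uniqueness.
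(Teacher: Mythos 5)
Your overall strategy is exactly the paper's: reduce via Lemma~\ref{lemma:lifting-shifting} and Lemma~\ref{lemma:quadric-vs-P4} to a faithful five-dimensional linear representation carrying an irreducible invariant quadric and a suitable invariant quartic, run through the short list $\A_5$, $\A_6$, $\PSLseven$, $\PSLeleven$, $\PSp$ of simple groups with a faithful representation of degree at most five, and settle the $\A_6$ uniqueness by the dimension counts $\dim(\Sym^2 V^\vee)^{\A_6}=1$ and $\dim\Hom(I,\Sym^4 V^\vee)=2$ together with the observation that one of the two invariant quartics is the square of the invariant quadric. Your treatments of $\PSLseven$, $\PSLeleven$, and $\A_6$ coincide with the paper's (you add a clean faithfulness argument via the deck transformation, which the paper leaves implicit).

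However, your exclusion of $\PSp$ is wrong as stated. You claim that $\Sym^2 U^\vee$ contains an invariant quadratic form and that the unique invariant quartic is its square. In fact the two five-dimensional irreducible representations of $\PSp$ are complex conjugates of one another and are \emph{not} self-dual, so $(\Sym^2 U^\vee)^G=0$ and there is no invariant quadric at all; moreover the unique invariant quartic in this representation is the irreducible Burkhardt quartic, not the square of anything. So the ``character computation'' you invoke would return a result contradicting your premise. The group is still correctly excluded, but for the simpler reason the paper gives (and which you already used for $\PSLeleven$): non-self-duality kills the invariant quadric, so no irreducible $G$-invariant quadric $Q$ can exist. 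Replacing your degree-four argument by this degree-two argument repairs the proof; as written, that step fails.
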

\begin{proof}
By Lemmas~\ref{lemma:lifting-shifting} and~\ref{lemma:quadric-vs-P4}
there exists a faithful five-dimensional representation $V$ of the group $G$,
and there is an irreducible (reduced) $G$-invariant quadric in $\P(V)$.
Suppose that $G$ is not isomorphic to $\A_5$.
We know from~\cite[\S8.5]{Feit}
that $G$ is one of the groups $\PSp$, $\PSLeleven$, $\PSLseven$, or $\A_6$.

Assume that $G\cong\PSp$. Then $V$ is one of the two
irreducible five-dimensional representations of $G$
(see~\cite[p.~27]{Atlas}). However, there are no
$G$-invariant quadrics in~\mbox{$\P(V)$}, see e.g.~\cite{Bu91}.

Assume that $G\cong\PSLeleven$.
Then $V$ is one of the two
irreducible five-dimensional representations of $G$, see~\cite[p.~7]{Atlas}. As in the previous case, there are no
$G$-invariant quadrics in~\mbox{$\P(V)$}.

Assume that $G\cong\PSLseven$.
Then $V\cong I\oplus I\oplus V_3$, where $I$ is a trivial
representation and $V_3$ is one of the two irreducible
three-dimensional representations of $G$, see~\cite[p.~3]{Atlas}.
There are no one-dimensional $G$-subrepresentations in $\Sym^2(V_3^\vee)$,
see e.g.~\cite{Klein}. Thus, every $G$-invariant quadric in $\P(V)$ is reducible
(or non-reduced).

Finally, assume that $G\cong\A_6$. Then
$G$ has two faithful five-dimensional
representations, and both of them are irreducible
(see e.g.~\cite[p.~5]{Atlas}). Moreover, the images of
$G$ in $\mathrm{PGL}_5(\C)$ under these two representations
are conjugate.
One can check that $\Sym^2(V^\vee)$ contains a unique
trivial subrepresentation,
and
$$
\dim\mathrm{Hom}\big(I,\Sym^4(V^\vee)\big)=2,
$$
where $I$ is the trivial representation of $G$ (which is also its
only one-dimensional representation).
One of the above trivial subrepresentations corresponds to a square of the
$\A_6$-invariant quadratic form on $V$ (that is unique up to scaling).
Therefore,
$X$ is uniquely defined in this case.
\end{proof}

\begin{remark}
Let $X$ be the (unique) threefold
that is a double cover of an irreducible quadric
branched over an intersection
with a quartic such that $X$ has a non-trivial action of the
group~$\A_6$ (see Lemma~\ref{lemma:Prokhorov}).
We will see later in Section~\ref{section:proj-irreducible}
that the singularities of $X$ are ordinary double points.
\end{remark}

Keeping in mind Lemma~\ref{lemma:Prokhorov}, in the rest of the paper
we will work with double quadrics that admit a faithful action of the
icosahedral group~$\A_5$.
Denote by $I$ the trivial representation of the group $\A_5$.
Let $W_3$ and $W_3^\prime$
be the two irreducible three-dimensional representations of $\A_5$,
and let $W_4$ and $W_5$ be the irreducible four-dimensional and five-dimensional representations of~$\A_5$,
respectively (see e.g.~\cite[p.~2]{Atlas}). Note that $I$, $W_4$, and $W_5$ can be also
considered as representations of the group $\SS_5$.

There are four faithful five-dimensional representations of the
icosahedral group $\A_5$, namely:
\mbox{$I\oplus I\oplus W_3$}, $I\oplus I\oplus W_3^\prime$, $I\oplus W_4$, and $W_5$.

\begin{remark}
\label{remark:sym-sym}
Put $U=I\oplus I\oplus W_3$ or $U=I\oplus I\oplus W_3^\prime$.
Then
$$\dim\Hom\big(I,\Sym^2(U^\vee)\big)=4, \quad
\dim\Hom\Big(I,\Sym^4(U^\vee)\big)=9.$$
Let $x_0$ and $x_1$ be coordinates in the $\A_5$-subrepresentation
$I\oplus I$, and let $y_0,y_1,y_2$ be coordinates in the $\A_5$-subrepresentation
$W_3$ or $W_3^\vee$.
It is well-known that up to scaling there is a unique
form of degree $2$ in $y_0,y_1,y_2$
that is preserved by $\A_5$.
We may assume that $y_0,y_1,y_2$ are chosen so that
this form is~\mbox{$y_0^2+y_1^2+y_2^2$}.
One can consider~\mbox{$x_0,x_1,y_0,y_1,y_2$}
as homogeneous coordinates on $\P^4=\P(U)$.
Every $\A_5$-invariant quadric in $\P^4$ is given by equation
$$
F_2(x_0,x_1)+\alpha (y_0^2+y_1^2+y_2^2)=0,
$$
where $F_2$ is a form of degree $2$ and $\alpha\in\C$.
In particular, this quadric contains the curve $C$ given by
equations
$$
x_0=x_1=y_0^2+y_1^2+y_2^2=0.
$$
Every $\A_5$-invariant quartic in $\P^4$ is given by equation
$$
G_4(x_0,x_1)+ G_2(x_0,x_1)(y_0^2+y_1^2+y_2^2)
+\beta (y_0^2+y_1^2+y_2^2)^2=0,
$$
where $G_i$ is a form of degree $i$ and $\beta\in\C$.
In particular, it is singular along the curve $C$.
Therefore, every complete intersection of a quadric and
a quartic in $\P(U)$ has non-isolated singularities.
\end{remark}

By Remark~\ref{remark:sym-sym}
to classify three-dimensional double quadrics with isolated singularities that admit an
icosahedral symmetry it remains to
consider the $\A_5$-representations
$I\oplus W_4$ and $W_5$. We will do this in the next sections.

\begin{remark}
In general, it is an interesting problem to classify Fano varieties with an action of
some relatively large finite group, for example the icosahedral group~$\A_5$.
Apart from applications to classification of finite subgroups of Cremona groups,
this often leads to really beautiful geometric constructions.
We do not expect many examples like this among Fano threefolds of large anticanonical
degree. However, it is possible that some interesting cases can arise among
$\mathbb{Q}$-Fano threefolds of large index (cf.~\cite{Pr16b})
or smooth Fano fourfolds of large anticanonical degree (cf.~\cite{Kuz15}).
\end{remark}

\section{Projectivization of the permutation representation}
\label{section:proj-perm}

Consider the vector space $I\oplus W_4$ as a permutation representation
of the groups $\A_5$ and~$\SS_5$, and put $\P^4=\P(I\oplus W_4)$.
Let $x_0,\ldots,x_4$
be homogeneous coordinates in $\P^4$ that
are permuted by~$\SS_5$.
Put
\begin{equation*}
\sigma_k(x_0,\ldots,x_4)=x_0^k+\ldots+x_4^k.
\end{equation*}
It is easy to see that every reduced $\A_5$-invariant quadric in
$\P^4$ is given by equation
\begin{equation}\label{eq:quadric-lambda}
\sigma_2(x_0,\ldots,x_4)+\lambda\sigma_1(x_0,\ldots,x_4)^2=0
\end{equation}
for some $\lambda\in\C$.

\begin{lemma}\label{lemma:quadric-lambda}
A quadric $Q$ given by equation~\eqref{eq:quadric-lambda}
is singular if and only if $\lambda=-\frac{1}{5}$. If
it is non-singular, there is an $\SS_5$-equivariant
linear change of coordinates such that after it $Q$
is given by equation~\eqref{eq:quadric-lambda} with $\lambda=0$.
\end{lemma}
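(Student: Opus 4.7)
The plan is to handle the singularity statement by a direct linear algebra computation and the normalization statement by a short representation-theoretic argument using Schur's lemma.

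First I would represent the form $\sigma_2 + \lambda\sigma_1^2$ on $\C^5$ by the symmetric matrix $I_5 + \lambda J$, where $I_5$ is the identity matrix and $J$ is the all-ones matrix. Since $J$ has rank one with nonzero eigenvalue $5$, the eigenvalues of $I_5 + \lambda J$ are $1 + 5\lambda$ (simple) and $1$ (with multiplicity four), so $\det(I_5 + \lambda J) = 1 + 5\lambda$. This vanishes precisely when $\lambda = -\tfrac{1}{5}$, which settles the first assertion.

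For the second assertion I would use the $\SS_5$-invariant decomposition $\C^5 \cong I \oplus W_4$, with $I$ spanned by $(1,\ldots,1)$ and $W_4 = \{\sigma_1 = 0\}$. Since $I$ and $W_4$ are non-isomorphic irreducible $\SS_5$-representations, Schur's lemma forces every $\SS_5$-equivariant linear automorphism of $\C^5$ to have the form $a \cdot \mathrm{id}_I \oplus b \cdot \mathrm{id}_{W_4}$ for some $a, b \in \C^{*}$. The form $\sigma_2 + \lambda\sigma_1^2$ restricts to $5(1+5\lambda) u^2$ on $I$ (where $u$ is a coordinate on $I$) and to the $\lambda$-independent form $\sigma_2\vert_{W_4}$ on $W_4$; an $\SS_5$-equivariant change of coordinates of the above form rescales these two restrictions by $a^2$ and $b^2$ respectively. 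Hence after such a change the form becomes proportional to the $\lambda = 0$ form if and only if $a^2/b^2 = 1/(1+5\lambda)$, and this is solvable over $\C$ precisely when $\lambda \neq -\tfrac{1}{5}$.

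I do not anticipate any genuine obstacle beyond keeping track of two small bookkeeping points: that scalar changes of variable act as squares on a quadratic form, and that the defining equation of a projective quadric is determined only up to an overall nonzero scalar (so the right notion here is proportionality of forms rather than equality).
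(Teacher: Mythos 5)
Your proof is correct, and it splits into a part that matches the paper and a part that does not. For the singularity criterion you and the paper do the same thing: the paper writes out the symmetric matrix $I_5+\lambda J$ explicitly and computes $\det M=5\lambda+1$, which is exactly your rank-one-perturbation eigenvalue computation. For the normalization you take a genuinely different route. The paper exhibits an explicit substitution $x_i'=x_i+\alpha\sigma_1(x_0,\ldots,x_4)$ with $\alpha$ a root of $5\alpha^2+2\alpha=\lambda$, checks invertibility, and verifies by direct expansion that $\sigma_2(x_0',\ldots,x_4')=\sigma_2+\lambda\sigma_1^2$ on the nose -- so the issue of proportionality versus equality of forms never arises there. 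You instead invoke Schur's lemma to see that every $\SS_5$-equivariant automorphism is $a\cdot\mathrm{id}_I\oplus b\cdot\mathrm{id}_{W_4}$ and solve $(1+5\lambda)a^2=b^2$; this is more conceptual and makes it transparent that $\lambda=-\tfrac{1}{5}$ is precisely the locus where the restriction of the form to the trivial summand degenerates. The one step you leave implicit is that the form $\sigma_2+\lambda\sigma_1^2$ is block-diagonal for the decomposition $I\oplus W_4$, i.e.\ that the associated bilinear form $\sum x_iy_i+\lambda\left(\sum x_i\right)\left(\sum y_i\right)$ vanishes when one argument is $(1,\ldots,1)$ and the other lies in $\{\sigma_1=0\}$; this is immediate but is needed before you may speak of the two restrictions rescaling independently by $a^2$ and $b^2$. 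What the paper's explicit substitution buys, and your argument does not, is downstream reuse: the same change of variables is applied in \S\ref{section:Artin-Mumford} to put the determinantal quartic into the normal form~\eqref{eq:quartic} with explicit $(\mu,\nu)$ as functions of $\alpha$, and it reappears in Remark~\ref{remark:two-parameter} as the one-parameter family of automorphisms of $\P^4$ commuting with $\A_5$.
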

\begin{proof}
The quadratic form associated to the quadric~\eqref{eq:quadric-lambda} is given by the matrix
$$
M=\left(
  \begin{array}{ccccc}
    \lambda+1 & \lambda & \lambda & \lambda & \lambda \\
    \lambda & \lambda+1 & \lambda & \lambda & \lambda \\
    \lambda & \lambda & \lambda+1 & \lambda & \lambda \\
    \lambda & \lambda & \lambda & \lambda+1 & \lambda \\
    \lambda & \lambda & \lambda & \lambda & \lambda+1 \\
  \end{array}
\right).
$$
One has $\det M=5\lambda+1$, so the matrix is degenerate
only for $\lambda=-\frac{1}{5}$, and this is the only case when the
quadric is singular.
Let $\lambda\neq -\frac{1}{5}$ and let $\alpha$ be a root of the equation
\begin{equation}\label{eq:alpha}
5\alpha^2+2\alpha=\lambda.
\end{equation}
Consider a linear change of coordinates
putting
$$x_i'=x_i+\alpha\sigma_1(x_0,\ldots,x_4).$$
Note that it is indeed an invertible change of coordinates,
i.e. the corresponding matrix is non-degenerate.
One has
\begin{multline*}
\sigma_2(x_0',\ldots,x_4')
=\sigma_2(x_0,\ldots,x_4)+2\alpha\sigma_1^2(x_0,\ldots,x_4)+
5\alpha^2\sigma_1^2(x_0,\ldots,x_4)=\\
=\sigma_2(x_0,\ldots,x_4)+\lambda\sigma_1^2(x_0,\ldots,x_4).
\end{multline*}
\end{proof}

Keeping in mind Lemma~\ref{lemma:quadric-lambda} and
Remark~\ref{remark:Pukhlikov}, in the rest of this section
we will ignore the case $\lambda=-\frac{1}{5}$ and
will denote by $Q$ the quadric given by equation
\begin{equation}\label{eq:quadric}
\sigma_2(x_0,\ldots,x_4)=0,
\end{equation}
i.e. by equation~\eqref{eq:quadric-lambda} with $\lambda=0$.
Every irreducible reduced $\A_5$-invariant intersection of $Q$ with a quartic in $\P^4$ is
given by equations~\eqref{eq:quadric} and
\begin{equation}\label{eq:quartic}
\sigma_4(x_0,\ldots,x_4)+4\mu \sigma_3(x_0,\ldots,x_4)\sigma_1(x_0,\ldots,x_4)
+\nu \sigma_1(x_0,\ldots,x_4)^4=0
\end{equation}
for some $\mu, \nu\in\C$.
We will denote such intersection by $S_{\mu,\nu}$.

\begin{remark}
Double covers of $Q$ branched over $S_{\mu,\nu}$ form a two-parameter family of double
quadrics. Thus, Lemma~\ref{lemma:Prokhorov} and Remark~\ref{remark:sym-sym} imply Theorem~\ref{theorem:main}(i).
\end{remark}

Writing down partial derivatives, we see that a point $P\in\P^4$ is a singular point
of the surface $S_{\mu,\nu}$ if and only if the form $\sigma_2$ and
the determinants of the matrices
\begin{equation}\label{equation:matrix-2x2}
M_{ij}=\left(
\begin{array}{cc}
x_i^3+\mu \sigma_3+3\mu x_i^2\sigma_1+
\nu\sigma_1^3 &
x_j^3+\mu \sigma_3+3\mu x_j^2\sigma_1+
\nu\sigma_1^3\\
x_i & x_j
\end{array}
\right)
\end{equation}
for all $0\le i<j\le 4$ vanish at $P$.

\begin{lemma}
\label{lemma:fundamental}
Let $P=(1:x_1:x_2:x_3:x_4)$ be a singular point of a surface $S_{\mu,\nu}$.
Then one of the following cases occurs.
\begin{itemize}
  \item[(i)] Up to permutation of coordinates one has $P=(1:1:\ii:\ii:0)$.
  \item[(ii)] Up to permutation of coordinates one has $P=(1:0:0:0:\ii)$.
  \item[(iii)] Up to permutation of coordinates one has $P=(1:1:1:a:b)$,
for some $a,b\in\C$ such that $a^2+b^2+3=0$.
  \item[(iv)] Up to permutation of coordinates one has $P=(1:a:a:b:b)$,
for some $a,b\in\C$ such that $2a^2+2b^2+1=0$.
\end{itemize}
\end{lemma}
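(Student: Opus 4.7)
The plan is to reduce the classification of singular points to a combinatorial analysis of how often each coordinate value is repeated, starting from a convenient factorization of the determinants. Expanding directly,
\[
\det M_{ij}=(x_i-x_j)\bigl[x_ix_j(x_i+x_j+3\mu\sigma_1)-\mu\sigma_3-\nu\sigma_1^3\bigr].
\]
Writing $s=3\mu\sigma_1(P)$ and $K=\mu\sigma_3(P)+\nu\sigma_1^3(P)$ (constants depending only on $P$), the vanishing of all $\det M_{ij}$ at $P$ becomes: for every pair $i<j$ with $x_i\neq x_j$ one has $x_ix_j(x_i+x_j+s)=K$.

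Next I would bound the number $m$ of distinct values among $x_0,\ldots,x_4$. The case $m=1$ is ruled out by $\sigma_2(P)=0$. For any three pairwise distinct values $v_a,v_b,v_c$, subtracting $v_av_b(v_a+v_b+s)=K$ from $v_av_c(v_a+v_c+s)=K$ and factoring yields $v_a(v_b-v_c)(v_a+v_b+v_c+s)=0$, so whenever $v_a\neq 0$ one obtains the identity $v_a+v_b+v_c+s=0$. If $m\geq 4$ and some distinct value is nonzero, fix such a $v_a$ together with some $v_b$ and apply this identity with two different choices of the third value, forcing them to coincide---a contradiction. When one of the distinct values is $0$, plugging it into the defining equation forces $K=0$, after which the same argument applied to the remaining (nonzero) values yields the contradiction. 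Hence $m\in\{2,3\}$.

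The surviving multiplicity patterns are $(3,1,1)$, $(2,2,1)$, $(3,2)$, $(4,1)$, and I would go through them in turn, using $\sigma_2(P)=0$ to pin down the coordinates up to scaling and permutation. For $(3,1,1)$ with triple value $w$ and singletons $u,u'$, the subcases $w\neq 0$ and $w=0$ produce cases~(iii) and~(ii) respectively. For $(2,2,1)$ with doubles $u,v$ and singleton $w$, the subcases $w\neq 0$ and $w=0$ produce cases~(iv) and~(i) respectively. The two-value patterns collapse into case~(iii): pattern $(4,1)$ gives points of the form $(1:1:1:1:\pm 2\ii)$, which matches $(1:1:1:a:b)$ with $a^2+b^2+3=0$; and pattern $(3,2)$ gives points of the form $(1:1:1:a:a)$ with $2a^2+3=0$, again a special instance of~(iii).

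The arithmetic is elementary once the factorization in the first step is in hand, so I do not expect genuine difficulty; the only real care is in the second step, where one must separately treat the situations in which one of the distinct coordinate values is~$0$ (so that divisions by $v_a$ or $v_b-v_c$ are legitimate), and then in the third step one must notice that the degenerate two-value patterns are absorbed into the case-(iii) family rather than producing new types of singular points.
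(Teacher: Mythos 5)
Your proof is correct and follows the same overall architecture as the paper's: factor the determinant conditions, show that $P$ has at most three distinct coordinate values, then enumerate the multiplicity patterns of $5$ into at most three parts using $\sigma_2(P)=0$. The genuine difference is in how the key step is executed. The paper normalizes $x_0=1$ and factors $\det M_{0,i}=(x_i-1)\bigl[(\sigma_3-3x_i\sigma_1)\mu+\sigma_1^3\nu-x_i(x_i+1)\bigr]$, after which ruling out three distinct values $\neq 1$ requires a three-way case split according to the vanishing of $\sigma_1$ and $\sigma_3$. Your fully symmetric factorization $\det M_{ij}=(x_i-x_j)\bigl[x_ix_j(x_i+x_j+3\mu\sigma_1)-\mu\sigma_3-\nu\sigma_1^3\bigr]$ (which I have checked) yields by a single subtraction the identity $v_a(v_b-v_c)(v_a+v_b+v_c+3\mu\sigma_1)=0$ for pairwise distinct values, which handles all of the paper's subcases uniformly; in fact your separate treatment of the subcase where one distinct value is $0$ is not even needed, since the identity is obtained without any division and at most one of four distinct values can vanish, so a nonzero $v_a$ always exists. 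Your enumeration of the patterns $(3,1,1)$, $(2,2,1)$, $(3,2)$, $(4,1)$ and the observation that the two-value patterns are absorbed into case~(iii) (which imposes no constraint $a\neq b$ or $a,b\neq 1$) matches the paper's conclusion, including the identification of $(0:0:0:1:\pm\ii)$ and $(0:1:1:\pm\ii:\pm\ii)$ with cases~(ii) and~(i) up to permutation and scaling.
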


\begin{proof}
Rewrite~\eqref{equation:matrix-2x2} as
\begin{equation}
\label{equation:M_ij}
M_{0,i}=(x_i-1)\cdot \left(\left(\sigma_3(P)-3x_i\sigma_1(P) \right)\mu+
\sigma_1(P)^3\nu-x_i(x_i+1)\right).
\end{equation}
We are going to show that
$|\{1,x_1,x_2,x_3,x_4\}|\leq 3$, i.e. the number of non-equal
coordinates that are also not equal to $1$ among $x_1,x_2,x_3,x_4$ is
at most~$2$.
Indeed, suppose that $x_1$, $x_2$, and~$x_3$ are three non-equal coordinates
that are also not equal to $1$.
Consider the following cases.

\begin{itemize}
\item One has $\sigma_1(P)=\sigma_3(P)=0$. From the vanishing of
the determinant of $M_{0,i}$ one gets
either $x_i=1$ or $x_i(x_i+1)=0$
for every $i=1,\ldots,4$. This means that $x_i\in\{0,1,-1\}$,
so that condition~\eqref{eq:quadric} fails.

\item One has $\sigma_1(P)=0$, $\sigma_3(P)\neq 0$.
From the vanishing of the determinant of $M_{0,i}$ one gets
\begin{equation*}
\label{equation:mu-Sigma-0}
\mu=\frac{x_i(x_i+1)}{\sigma_3(P)}
\end{equation*}
for every $i=1,2,3$.
This implies
$$0=x_i(x_i+1)-x_j(x_j+1)=(x_i-x_j)(x_i+x_j+1)$$
for $i,j=1,2,3$, so that
$x_i+x_j+1=0$,
which gives a contradiction with the assumption that all $x_k$, $k=1,2,3$, are different.
\item One has $\sigma_1(P)\neq 0$.
From the equalities $\det M_{0,i}=\det M_{0,j}$ for $i,j=1,2,3$ one gets
\begin{equation}
\label{equation:mu-Sigma-ne0}
\mu=-\frac{x_i+x_j+1}{3\sigma_1(P)}.
\end{equation}
Hence
$$x_1+x_2+1=x_1+x_3+1=x_2+x_3+1,$$
which again gives a contradiction with the condition that all $x_k$, $k=1,2,3$, are different.
\end{itemize}

Thus one can assume that $x_1=a$, $x_2=b$,
and each of the coordinates~$x_3$ and~$x_4$ equals either~$1$, or~$a$, or~$b$.
This together with equation~\mbox{$\sigma_2(P)=0$}
gives possibilities (i)--(iv) for~$P$.
\end{proof}

\begin{corollary}\label{corollary:sing-on-sigma1}
Let $P=(x_0,\ldots,x_4)$ be a point such that $\sigma_1(P)=0$.
Then $P$ is not a singular point of any surface $S_{\mu,\nu}$.
\end{corollary}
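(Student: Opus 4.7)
The plan is to invoke Lemma~\ref{lemma:fundamental}, which puts any singular point $P$ of $S_{\mu,\nu}$ into one of four explicit forms (after a permutation of coordinates normalizing $x_0=1$), and then rule out $\sigma_1(P)=0$ case by case. The crucial simplification is that when $\sigma_1(P)=0$, the quartic equation~\eqref{eq:quartic} defining $S_{\mu,\nu}$ degenerates to
$$\sigma_4(P)=0$$
independently of $\mu$ and $\nu$. So after imposing $\sigma_2(P)=0$ and $\sigma_1(P)=0$ it suffices to verify that $\sigma_4(P)\neq 0$ (or, easier, that $\sigma_1(P)=0$ is already impossible from the shape of $P$).

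In cases~(i) and~(ii) of Lemma~\ref{lemma:fundamental} the coordinates of $P$ are explicit complex numbers, and one checks directly that $\sigma_1(P)$ equals $2+2\ii$ and $1+\ii$ respectively, so neither vanishes. In cases~(iii) and~(iv) the point depends on auxiliary parameters $a,b$ constrained by a relation coming from $\sigma_2(P)=0$; adding $\sigma_1(P)=0$ pins down the value of $a+b$, and combined with the $\sigma_2$-relation this determines $ab$. Thus $a,b$ become the two roots of an explicit quadratic. Expanding $a^4+b^4=(a^2+b^2)^2-2(ab)^2$ via Newton's identities then yields a specific numerical value for $\sigma_4(P)$, which a short computation shows is a non-zero (small) rational number in both cases. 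Hence $P$ does not lie on $S_{\mu,\nu}$ at all, contradicting the assumption that $P$ is a singular point of $S_{\mu,\nu}$.

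The only technical subtlety is that Lemma~\ref{lemma:fundamental} is formulated with $x_0=1$, i.e.\ after fixing one non-zero coordinate; this causes no loss of generality, as any point of~$\P^4$ has some non-zero coordinate and the hypothesis $\sigma_1(P)=0$ is preserved under permutations of coordinates. The most delicate step is the arithmetic in cases~(iii)--(iv), where $\sigma_1(P)=0$ is consistent with the $\sigma_2$-constraint and so must be ruled out by an explicit calculation of $\sigma_4(P)$; I do not expect any real obstacle there, since the values are easily computed from $a+b$ and $ab$ using standard symmetric function identities.
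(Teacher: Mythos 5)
Your proposal is correct and follows essentially the same route as the paper: the paper likewise observes that a singular point with $\sigma_1(P)=0$ would have to satisfy $\sigma_1(P)=\sigma_2(P)=\sigma_4(P)=0$, then runs through the four cases of Lemma~\ref{lemma:fundamental}, noting that $\sigma_1(P)\neq 0$ in cases~(i)--(ii) and that the system has no solutions in cases~(iii)--(iv). Your explicit symmetric-function computation (e.g.\ $a+b=-3$, $ab=6$, $\sigma_4(P)=-60$ in case~(iii), and $a+b=-\tfrac12$, $ab=\tfrac38$, $\sigma_4(P)=\tfrac{15}{16}$ in case~(iv)) just fills in the step the paper calls ``straightforward to check.''
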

\begin{proof}
Suppose that $P$ is a singular point of
a surface $S_{\mu,\nu}$.
Recalling equations~\eqref{eq:quadric} and~\eqref{eq:quartic},
we see that
\begin{equation}\label{eq:sigma-123}
\sigma_1(P)=\sigma_2(P)=\sigma_4(P)=0.
\end{equation}
On the other hand, the point $P$ must be
of one of the types listed in Lemma~\ref{lemma:fundamental}.
For the cases~(i) and~(ii) the assumption $\sigma_1(P)=0$ fails.
In the remaining two cases it is straightforward to check that the system of
equations~\eqref{eq:sigma-123} has no solutions.
\end{proof}

\begin{corollary}\label{corollary:isolated-singularities}
Any surface $S_{\mu,\nu}$ has isolated singularities.
\end{corollary}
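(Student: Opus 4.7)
The plan is to deduce Corollary~\ref{corollary:isolated-singularities} from Lemma~\ref{lemma:fundamental} by showing that cases (iii) and (iv) of that lemma, which a priori yield one-parameter families of candidate singular points in $\P^4$, cannot contribute one-dimensional components to the singular locus of any $S_{\mu,\nu}$. Cases (i) and (ii) consist of finite $\SS_5$-orbits, hence automatically contribute only isolated candidate points.

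The key observation is that the proof of Lemma~\ref{lemma:fundamental} exploits only the equation $\sigma_2(P)=0$ and the vanishings of $\det M_{0,i}$; the additional vanishings $\det M_{ij} = 0$ with $i,j \geq 1$ give new constraints. A direct computation from the definition yields the factorisation
\[
M_{ij} = (x_i - x_j)\bigl(x_i x_j (x_i + x_j) + 3\mu \sigma_1 x_i x_j - \mu \sigma_3 - \nu \sigma_1^3\bigr),
\]
which specialises to formula~\eqref{equation:M_ij} when $i = 0$ and $x_0 = 1$.

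For case (iii), I would take $P = (1:1:1:a:b)$ with $a^2 + b^2 + 3 = 0$ and examine $\det M_{3,4}$. After removing the factor $(a-b)$ (which vanishes only at the two points $a = b = \pm\ii\sqrt{3/2}$), I would parametrise the rest of the curve by $s = a+b$, using $ab = (s^2+3)/2$ and $a^3 + b^3 = -(s^3+9s)/2$, so that the cofactor becomes a cubic polynomial in $s$ whose four coefficients are affine-linear in $\mu$ and $\nu$. The claim then reduces to checking that no $(\mu,\nu)$ simultaneously zeroes all four coefficients: in fact, the $s^2$ coefficient and the constant coefficient already force $\mu = \nu = 0$, which contradicts the non-vanishing of the leading coefficient. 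Hence for any fixed $(\mu,\nu)$ the singular locus of $S_{\mu,\nu}$ meets the type (iii) family in only finitely many points.

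Case (iv), with $P = (1:a:a:b:b)$ and $2a^2 + 2b^2 + 1 = 0$, is handled analogously: $\det M_{1,2}$ and $\det M_{3,4}$ vanish identically because of repeated coordinates, but $\det M_{1,3}$ yields a cofactor of the same shape which, upon setting $s = a+b$ with $ab = (2s^2+1)/4$ and $a^3+b^3 = -(2s^3+3s)/4$, becomes another cubic polynomial in $s$ whose coefficient system in $(\mu,\nu)$ is again inconsistent. The only real obstacle is executing these two elementary linear-algebra verifications; in each case the relevant system is visibly over-determined, so the proposed approach requires no conceptual input beyond Lemma~\ref{lemma:fundamental}.
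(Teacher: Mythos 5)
Your argument is correct, but it takes a genuinely different route from the paper's. The paper deduces Corollary~\ref{corollary:isolated-singularities} in two lines from Corollary~\ref{corollary:sing-on-sigma1}: a positive-dimensional component of $\Sing(S_{\mu,\nu})$ would be a closed subvariety of $\P^4$ of positive dimension and would therefore meet the hyperplane $\sigma_1=0$, while Corollary~\ref{corollary:sing-on-sigma1} (itself obtained by checking that $\sigma_1=\sigma_2=\sigma_4=0$ is inconsistent on the candidate loci of Lemma~\ref{lemma:fundamental}) says that no singular point of any $S_{\mu,\nu}$ lies on that hyperplane. You instead intersect the one-parameter candidate families (iii) and (iv) of Lemma~\ref{lemma:fundamental} with the remaining conditions $\det M_{ij}=0$, $i,j\ge 1$. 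Your factorization $\det M_{ij}=(x_i-x_j)\bigl(x_ix_j(x_i+x_j)+3\mu\sigma_1x_ix_j-\mu\sigma_3-\nu\sigma_1^3\bigr)$ is correct (it specializes to~\eqref{equation:M_ij} for $i=0$), and both coefficient computations check out: in case (iii) the $s^2$ and $s^0$ coefficients of the cofactor are $\tfrac{9\mu}{2}-9\nu$ and $\tfrac{21\mu}{2}-27\nu$, forcing $\mu=\nu=0$ and contradicting the leading coefficient $\tfrac12+2\mu-\nu=\tfrac12$; in case (iv) one gets $\tfrac{3\mu}{2}-12\nu$ and $-\tfrac{\mu}{4}-\nu$, again forcing $\mu=\nu=0$ against the leading coefficient $\tfrac12+4\mu-8\nu=\tfrac12$. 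Since $s=a+b$ is non-constant on each (irreducible) candidate conic, a non-vanishing cubic in $s$ cuts out finitely many points, and the residual locus $a=b$ is finite, so your conclusion follows. The trade-off: the paper's proof is shorter and purely geometric, whereas yours is more explicit and in effect anticipates Corollary~\ref{corollary:nu}(v) and formula~\eqref{eq:mu-nu}, which show that $(\mu,\nu)$ is determined by a singular point on these families --- another way to see the finiteness you establish by hand.
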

\begin{proof}
If $S_{\mu,\nu}$ has non-isolated singularities, then there is a
singular point $P$ of $S_{\mu,\nu}$ such that $\sigma_1(P)=0$.
The latter is impossible by Corollary~\ref{corollary:sing-on-sigma1}.
\end{proof}

We introduce the following notation:
\begin{itemize}
\item $\Sigma_5^+$ denotes the $\SS_5$-orbit of the point $P=(1:1:1:1:2\ii)$;

\item $\Sigma_5^-$ denotes the $\SS_5$-orbit of the point $P=(1:1:1:1:-2\ii)$;

\item $\Sigma_{10}^+$ denotes the $\SS_5$-orbit of the point
$P=(1:1:1:\sqrt{\frac{3}{2}}\ii:\sqrt{\frac{3}{2}}\ii)$;

\item $\Sigma_{10}^-$ denotes the $\SS_5$-orbit of the point
$P=(1:1:1:-\sqrt{\frac{3}{2}}\ii:-\sqrt{\frac{3}{2}}\ii)$;

\item $\Sigma_{20}$ denotes the $\SS_5$-orbit of the point $P=(1:0:0:0:\ii)$;

\item $\Sigma_{20}^{a,b}$ denotes the $\SS_5$-orbit of the point $P=(1:1:1:a:b)$, where $a, b\in\C$ are such that
$a\neq b$, $a\neq 1$, and $b\neq 1$; note that the pairs $(a,b)$ and
$(b,a)$ give the same $\SS_5$-orbit, and thus only a non-ordered pair
is important here;

\item $\Sigma_{30}$ denotes the $\SS_5$-orbit of the point $P=(1:1:\ii:\ii:0)$;

\item $\Sigma_{30}^{a,b}$ denotes the $\SS_5$-orbit of the point
$P=(1:a:a:b:b)$, where $a, b\in\C$ are such that
$a\neq b$, $a\neq 1$, and $b\neq 1$;
as before, the pairs $(a,b)$ and
$(b,a)$ give the same $\SS_5$-orbit, and thus only a non-ordered pair
is important here.
\end{itemize}

\begin{remark}
One has
$$|\Sigma_k^+|=|\Sigma_k^-|=k, \quad
|\Sigma_k|=k, \quad
|\Sigma_k^{a,b}|=k.$$
Moreover, one can check that apart from the unique $\SS_5$-fixed
point in $\P^4$ that corresponds to the trivial
subrepresentation (and that is not contained in the quadric~$Q$),
the orbits
$\Sigma_5^+$, $\Sigma_5^-$, $\Sigma_{10}^+$, $\Sigma_{10}^-$,
$\Sigma_{20}$, $\Sigma_{20}^{a,b}$, $\Sigma_{30}$, and
$\Sigma_{30}^{a,b}$ are the only $\SS_5$-orbits
of length less than~$60$ in~$\P^4$.
Note also that $\Sigma_5^+$, $\Sigma_5^-$,
$\Sigma_{10}^+$, $\Sigma_{10}^-$, and
$\Sigma_{20}$ are contained in the closure of the union of
the $\SS_5$-orbits~$\Sigma_{20}^{a,b}$. Similarly,
$\Sigma_5^+$, $\Sigma_5^-$,
$\Sigma_{10}^+$, $\Sigma_{10}^-$, and
$\Sigma_{30}$ are contained in the closure of the union
of the $\SS_5$-orbits~$\Sigma_{30}^{a,b}$.
\end{remark}

\begin{corollary}
\label{corollary:nu}
Let $P=(x_0,\ldots,x_4)\in S_{\mu,\nu}$ be a point such that $\sigma_1(P)\neq 0$.
Then $P$ is singular on $S_{\mu,\nu}$ if and only if one of the following cases occurs.
\begin{itemize}
  \item[(i)] Up to permutation of coordinates one has $P=(1:0:0:0:\ii)$, so that $P\in \Sigma_{20}$;
in this case
$$
\mu= -\frac{1}{3},\ \nu= -\frac{1}{6}.
$$

  \item[(ii)] Up to permutation of coordinates one has $P=(1:1:\ii:\ii:0)$, so that $P\in \Sigma_{30}$;
in this case
$$
\mu= -\frac{1}{6},\ \nu= -\frac{1}{48}.
$$

\item[(iii)] Up to permutation of coordinates
one has $P=(1:1:1:1:2\ii)$, so that $P\in\Sigma_5^+$.
Then~$P$ is a singular point of a surface $S_{\mu,\nu}$
if and only if
\begin{equation}
\label{eq:sigma5plus_mu_nu}
\nu=
\left(\frac{8}{25}+\frac{6}{25}\ii\right)\mu+\left(\frac{7}{500}+\frac{6}{125}\ii\right).
\end{equation}

\item[(iii')] Up to permutation of coordinates
one has $P=(1:1:1:1:-2\ii)$, so that $P\in\Sigma_5^-$.
Then~$P$ is a singular point of a surface $S_{\mu,\nu}$
if and only if
\begin{equation}
\label{eq:sigma5minus_mu_nu}
\nu=
\left(\frac{8}{25}-\frac{6}{25}\ii\right)\mu+\left(\frac{7}{500}-\frac{6}{125}\ii\right).
\end{equation}

\item[(iv)] Up to permutation of coordinates
one has
$P=\left(1:1:1:\sqrt{\frac{3}{2}}\ii:\sqrt{\frac{3}{2}}\ii\right)$,
so that $P\in\Sigma_{10}^+$.
Then~$P$ is a singular point of a surface $S_{\mu,\nu}$
if and only if
\begin{equation}
\label{eq:sigma10plus_mu_nu}
\nu=\left(\frac{8}{25}+\frac{2\sqrt{6}}{75}\ii\right)\mu+\frac{23}{750}+\frac{2\sqrt{6}}{375}\ii.
\end{equation}

\item[(iv')]
Up to permutation of coordinates one has
$P=\left(1:1:1:-\sqrt{\frac{3}{2}}\ii:-\sqrt{\frac{3}{2}}\ii\right)$,
so that $P\in\Sigma_{10}^-$.
Then~$P$ is a singular point of a surface $S_{\mu,\nu}$
if and only if
\begin{equation}
\label{eq:sigma10minus_mu_nu}
\nu=\left(\frac{8}{25}-\frac{2\sqrt{6}}{75}\ii\right)\mu+\frac{23}{750}-\frac{2\sqrt{6}}{375}\ii.
\end{equation}

  \item[(v)] Up to permutation of coordinates one has $P=(1:1:1:a:b)$
or~\mbox{$P=(1:a:a:b:b)$} with $a\neq 1$, $b\neq 1$, $a\neq b$; in this case
\begin{equation}\label{eq:mu-nu}
\mu = -\frac{a+b+1}{3\sigma_1(P)},\
\nu=\frac{\sigma_3(P)(a+b+1)-3ab\sigma_1(P)}{3\sigma_1(P)^4}.
\end{equation}
\end{itemize}
\end{corollary}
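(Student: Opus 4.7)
The plan is a case-by-case analysis rooted in Lemma~\ref{lemma:fundamental}.

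The first step is a simplifying observation. Given a point $P$ with $\sigma_2(P) = 0$, if all the determinants $\det M_{ij}$ from~\eqref{equation:matrix-2x2} vanish at $P$, then $P$ automatically lies on the quartic~\eqref{eq:quartic}. Indeed, the $M_{ij}$-conditions force the gradient $\nabla F(P)$ of the left-hand side $F$ of~\eqref{eq:quartic} to be proportional to $(x_0, \ldots, x_4)$, so by Euler's identity $4F(P) = \sum x_i\, \partial F/\partial x_i |_P$ becomes a scalar multiple of $\sigma_2(P)$, hence vanishes. Thus it suffices to find $(\mu,\nu)$ for which the $M_{ij}$-equations hold at the candidate $P$ produced by Lemma~\ref{lemma:fundamental}.

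I would then dispatch the two isolated types from Lemma~\ref{lemma:fundamental}. For $P = (1:0:0:0:\ii)$, which has orbit $\Sigma_{20}$ (type (ii)), one computes $\sigma_1 = 1+\ii$, $\sigma_3 = 1-\ii$, $\sigma_1^3 = -2+2\ii$. Substituting into~\eqref{equation:M_ij}, the condition $M_{0,1} = 0$ reduces to $\mu = 2\nu$, while $M_{0,4} = 0$ reduces (after factoring out $(1-\ii)$) to $4\mu - 2\nu + 1 = 0$; together they force $(\mu,\nu) = (-1/3, -1/6)$, which is case~(i). For $P = (1:1:\ii:\ii:0)$, of orbit $\Sigma_{30}$ (type (i)), the surviving conditions $M_{0,2} = 0$ and $M_{0,4} = 0$ similarly factor through $(1-\ii)$ and yield two linear relations with unique solution $(\mu,\nu) = (-1/6, -1/48)$, giving case~(ii).

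For the generic parametric case~(v), take $P = (1:1:1:a:b)$ with $a, b, 1$ pairwise distinct, as in Lemma~\ref{lemma:fundamental}(iii). The equations $M_{0,3} = 0$ and $M_{0,4} = 0$ reduce, via~\eqref{equation:M_ij}, to $(\sigma_3 - 3a\sigma_1)\mu + \sigma_1^3 \nu = a(a+1)$ and the analogous equation with $a$ replaced by $b$; their difference recovers $\mu = -(a+b+1)/(3\sigma_1)$ exactly as in~\eqref{equation:mu-Sigma-ne0}. Substituting back and multiplying through by $3\sigma_1$ yields
$$3\sigma_1^4\, \nu = 3a(a+1)\sigma_1 + (\sigma_3 - 3a\sigma_1)(a+b+1) = \sigma_3(a+b+1) - 3ab\sigma_1,$$
which is~\eqref{eq:mu-nu}. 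The remaining determinants $\det M_{ij}$ vanish automatically once $\nabla F(P)$ is proportional to $P$. The case $P = (1:a:a:b:b)$ from Lemma~\ref{lemma:fundamental}(iv) is treated in parallel and produces the same pair of formulas in terms of $\sigma_1$, $\sigma_3$, $a$, $b$.

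Finally, I would treat the degenerate specializations of Lemma~\ref{lemma:fundamental}(iii)--(iv). When $a = 1$ in part~(iii) one has $b = \pm 2\ii$, so $P = (1:1:1:1:\pm 2\ii)$ lies in the orbit $\Sigma_5^\pm$; when $a = b$ one has $a = \pm\sqrt{3/2}\,\ii$, giving the orbit $\Sigma_{10}^\pm$. For $P \in \Sigma_5^+$, all $M_{ij}$ with $i,j \in \{0,1,2,3\}$ vanish trivially, and the four determinants $M_{i,4}$ agree up to a common nonzero factor by $\SS_4$-symmetry on the repeated coordinates; the resulting single linear equation in $(\mu,\nu)$, solved for $\nu$ and rationalized, gives~\eqref{eq:sigma5plus_mu_nu}. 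An entirely analogous computation on each of $\Sigma_5^-$, $\Sigma_{10}^+$, $\Sigma_{10}^-$ yields~\eqref{eq:sigma5minus_mu_nu}, \eqref{eq:sigma10plus_mu_nu}, and \eqref{eq:sigma10minus_mu_nu} respectively, with $\Sigma_5^-$ and $\Sigma_{10}^-$ also obtainable from the $+$ versions by complex conjugation. The principal labor lies in this last step: rationalizing the complex denominators to extract the exact coefficients in the statement is tedious bookkeeping, but no structural obstacle arises and everything reduces to linear algebra over $\C$.
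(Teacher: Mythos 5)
Your proposal is correct and follows essentially the same route as the paper's (very terse) proof: obtain $\mu$ by subtracting two of the conditions $\det M_{0,i}=0$, which is exactly~\eqref{equation:mu-Sigma-ne0}, then back-substitute into~\eqref{equation:M_ij} to solve for $\nu$, handling the collapsed orbits $\Sigma_5^{\pm}$, $\Sigma_{10}^{\pm}$ and the isolated orbits $\Sigma_{20}$, $\Sigma_{30}$ by the same linear algebra. Your Euler-identity remark explaining why the quartic equation holds automatically is a detail the paper leaves implicit, and the sample computations you give check out.
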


\begin{proof}
The value of $\mu$ is given by~\eqref{equation:mu-Sigma-ne0},
and the value of $\nu$ can be found from~\eqref{equation:M_ij}.
\end{proof}

Using Corollary~\ref{corollary:nu}, we derive the following facts.

\begin{corollary}\label{corollary:1-1-1-a-b}
Suppose that $P=(1:1:1:a:b)$ and $\sigma_2(P)=a^2+b^2+3=0$.
Suppose also that $a\neq 1$, $b\neq 1$, and $a\neq b$,
i.e. one has $P\in\Sigma_{20}^{a,b}$.
Then one of the following cases occurs.
\begin{enumerate}
\item[(i)] Up to permutation of coordinates one has
$$P=\left(1:1:1:-\frac{3}{2}+\frac{\sqrt{15}}{2}\ii:
-\frac{3}{2}-\frac{\sqrt{15}}{2}\ii\right).$$
Then $\sigma_1(P)=0$, and $P$ is not a singular
point of any surface~$S_{\mu,\nu}$
(cf. Corollary~\ref{corollary:sing-on-sigma1}).

\item[(ii)]
One has $\sigma_1(P)\neq 0$.
Then~$P$ is a singular point of a surface $S_{\mu,\nu}$
if and only if
\begin{equation*}
\mu = -\frac{a+b+1}{3(a+b+3)},\
\nu=\frac{(a^3+b^3+3)(a+b+1)-3ab(a+b+3)}{3(a+b+3)^4}.
\end{equation*}
\end{enumerate}
\end{corollary}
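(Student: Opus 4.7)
The plan is to split the argument on whether $\sigma_1(P)$ vanishes, and in each branch to read the conclusion directly from the information already gathered about $P$ together with the formulas of Corollary~\ref{corollary:nu}(v). Throughout, using $P=(1:1:1:a:b)$, we have $\sigma_1(P)=a+b+3$ and $\sigma_3(P)=a^3+b^3+3$, while the hypothesis $\sigma_2(P)=0$ reads $a^2+b^2=-3$.

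First I treat the degenerate case $\sigma_1(P)=0$, i.e. $a+b=-3$. Squaring gives $9=(a+b)^2=a^2+2ab+b^2=-3+2ab$, so $ab=6$. Hence $a$ and $b$ are the two roots of $t^2+3t+6=0$, namely
\begin{equation*}
t=-\frac{3}{2}\pm\frac{\sqrt{15}}{2}\ii.
\end{equation*}
These roots are distinct and different from $1$, so the constraints for $P\in\Sigma_{20}^{a,b}$ are satisfied, and, since the ordered pairs $(a,b)$ and $(b,a)$ give the same $\SS_5$-orbit, this determines $P$ uniquely up to the action of $\SS_5$. That $P$ is not a singular point of any $S_{\mu,\nu}$ is then immediate from Corollary~\ref{corollary:sing-on-sigma1}, since $\sigma_1(P)=0$.

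In the remaining case $\sigma_1(P)\neq 0$, Corollary~\ref{corollary:nu}(v) applies directly and gives
\begin{equation*}
\mu=-\frac{a+b+1}{3\sigma_1(P)},\quad \nu=\frac{\sigma_3(P)(a+b+1)-3ab\,\sigma_1(P)}{3\sigma_1(P)^4}.
\end{equation*}
Substituting $\sigma_1(P)=a+b+3$ and $\sigma_3(P)=a^3+b^3+3$ yields exactly the formulas asserted in~(ii).

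There is no real obstacle beyond bookkeeping: one only has to notice that partitioning according to the vanishing of $\sigma_1(P)$ exhausts all possibilities, and that in case~(i) the two solutions for $(a,b)$ are interchanged by the $\SS_5$-action and therefore correspond to a single orbit. Everything else is a direct substitution into the general formulas already established.
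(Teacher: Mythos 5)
Your proof is correct and follows the same route the paper intends: the paper states this corollary as a direct consequence of Corollary~\ref{corollary:nu} (and Corollary~\ref{corollary:sing-on-sigma1}) without writing out details, and you have simply supplied the short computation $a+b=-3$, $ab=6$ identifying the unique orbit with $\sigma_1(P)=0$, together with the substitution $\sigma_1(P)=a+b+3$, $\sigma_3(P)=a^3+b^3+3$ into formula~\eqref{eq:mu-nu}.
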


\begin{remark}\label{remark:quartic-curve-1}
In case~(ii) of Corollary~\ref{corollary:1-1-1-a-b} one has
$$
\nu=-\frac{405}{4}\mu^4-81\mu^3-27\mu^2-\frac{1}{4}.
$$
\end{remark}

\begin{corollary}\label{corollary:a-a-b-b-1}
Suppose that $P=(1:a:a:b:b)$, and $\sigma_2(P)=2a^2+2b^2+1=0$.
Suppose also that $a\neq 1$, $b\neq 1$, and $a\neq b$,
i.e. one has $P\in\Sigma_{30}^{a,b}$.
Then one of the following cases occurs.
\begin{enumerate}
\item[(i)] Up to permutation of coordinates one has
$$P=\left(-1:\frac{1}{4}+\frac{\sqrt{5}}{4}\ii:\frac{1}{4}+\frac{\sqrt{5}}{4}\ii:\frac{1}{4}-\frac{\sqrt{5}}{4}\ii:\frac{1}{4}-\frac{\sqrt{5}}{4}\ii
\right).$$
Then $\sigma_1(P)=0$, and $P$ is not a singular
point of any surface~$S_{\mu,\nu}$
(cf. Corollary~\ref{corollary:sing-on-sigma1}).

\item[(ii)]
One has $\sigma_1(P)\neq 0$.
Then~$P$ is a singular point of a surface $S_{\mu,\nu}$
if and only if
\begin{equation*}
\mu = -\frac{a+b+1}{3(2a+2b+1)},\
\nu=\frac{(2a^3+2b^3+1)(a+b+1)-3ab(2a+2b+1)}{3(2a+2b+1)^4}.
\end{equation*}
\end{enumerate}
\end{corollary}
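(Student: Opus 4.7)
The plan is to mirror the proof of Corollary~\ref{corollary:1-1-1-a-b}, splitting according to whether $\sigma_1(P)$ vanishes or not. First I would record the relevant power sums at $P = (1:a:a:b:b)$: one has $\sigma_1(P) = 1+2a+2b$ and $\sigma_3(P) = 1+2a^3+2b^3$, while the hypothesis amounts to $\sigma_2(P) = 1+2a^2+2b^2 = 0$, expressing that $P\in Q$.

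For case~(i), the additional assumption $\sigma_1(P)=0$ gives the system $a+b=-\tfrac{1}{2}$ and $a^2+b^2=-\tfrac{1}{2}$. Hence $2ab=(a+b)^2-(a^2+b^2)=\tfrac{3}{4}$, so $a$ and $b$ are the two roots of $t^2+\tfrac{1}{2}t+\tfrac{3}{8}=0$, namely $t=\tfrac{-1\pm\sqrt{5}\,\ii}{4}$. Rescaling the homogeneous coordinates of $P$ by $-1$ then yields precisely the point listed in~(i); the inequalities $a\ne b$, $a\ne 1$, $b\ne 1$ are immediate. Corollary~\ref{corollary:sing-on-sigma1} now rules out $P$ being a singular point of any $S_{\mu,\nu}$.

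For case~(ii), I would directly invoke Corollary~\ref{corollary:nu}(v), which covers points of the shape $(1:a:a:b:b)$ with $a\ne b$, $a\ne 1$, $b\ne 1$ and $\sigma_1(P)\ne 0$. Substituting $\sigma_1(P)=2a+2b+1$ and $\sigma_3(P)=2a^3+2b^3+1$ into the formulas~\eqref{eq:mu-nu} produces exactly the stated expressions for $\mu$ and $\nu$.

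I do not expect any genuine obstacle: the corollary is essentially a specialization of Corollary~\ref{corollary:nu}(v) to the coordinate pattern $(1:a:a:b:b)$, together with an elementary quadratic solve in case~(i). The only point that deserves explicit mention is that the hypothesis $\sigma_2(P)=0$ plays no role in deriving the formulas of~(ii); it only guarantees that $P$ lies on the branch surface, while the singularity conditions of Corollary~\ref{corollary:nu} already encode this through the matrix~\eqref{equation:matrix-2x2}.
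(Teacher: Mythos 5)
Your proposal is correct and follows essentially the same route as the paper, which states this result without separate proof as a direct consequence of Corollary~\ref{corollary:nu}: case~(ii) is just the specialization of formula~\eqref{eq:mu-nu} to $\sigma_1(P)=2a+2b+1$ and $\sigma_3(P)=2a^3+2b^3+1$, and case~(i) is the elementary quadratic solve you carry out (whose roots $t=\tfrac{-1\pm\sqrt{5}\ii}{4}$ indeed give the listed point after rescaling by $-1$), combined with Corollary~\ref{corollary:sing-on-sigma1}. The only nitpick is your closing aside: the vanishing of $\sigma_2(P)$ is a separate hypothesis ensuring $P\in Q$ rather than something encoded in the matrices~\eqref{equation:matrix-2x2}, but this does not affect the argument.
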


\begin{remark}\label{remark:quartic-curve-2}
In case~(ii) of Corollary~\ref{corollary:a-a-b-b-1} one has
$$
\nu=405\mu^4+324\mu^3+99\mu^2+14\mu+\frac{3}{4}.
$$
\end{remark}

Now we are ready to give a complete description
of singular loci of the surfaces~$S_{\mu,\nu}$ and the corresponding
double covers. To do this we will need some additional notation.

\begin{itemize}
\item
Let $\mu_{5,1}^+$ and $\mu_{5,2}^+$ be two different roots of the equation
$$
894825\mu^2+(126510+149670\ii)\mu-1249+9382\ii=0
$$
and define $\nu_{5,1}^+$ and $\nu_{5,2}^+$ by formula~\eqref{eq:sigma5plus_mu_nu}.

\item
Let
$\mu_{5}^{ab,+}=-\frac{1}{5}-\frac{1}{15}\ii,\ \nu_{5}^{ab,+}=-\frac{17}{500}-\frac{8}{375}\ii$.

\item
Let $\mu_{5,1}^-$ and $\mu_{5,2}^-$ be two different roots of the equation
$$
894825\mu^2+(126510-149670\ii)\mu-1249-9382\ii=0
$$
and define $\nu_{5,1}^-$ and $\nu_{5,2}^-$ by formula~\eqref{eq:sigma5minus_mu_nu}.

\item
Let
$\mu_{5}^{ab,-}=-\frac{1}{5}+\frac{1}{15}\ii,\ \nu_{5}^{ab,-}=-\frac{17}{500}+\frac{8}{375}\ii$.

\item
Let $\mu_{10,1}^+$ and $\mu_{10,2}^+$ be two different roots of the equation
$$
216090\mu^2+(34140+1485\sqrt{6}\ii)\mu+1556+93\sqrt{6}\ii=0
$$
and define $\nu_{10,1}^+$ and $\nu_{10,2}^+$ by formula~\eqref{eq:sigma10plus_mu_nu}.

\item
Let $\mu_{20\to 10}^+=-\frac{1}{5}-\frac{2\sqrt{6}}{45}\ii,\ \nu_{20\to 10}^+=-\frac{59}{2250}-\frac{16\sqrt{6}}{1125}\ii$.

\item
Let $\mu_{30\to 10}^+=-\frac{1}{5}+\frac{\sqrt{6}}{90}\ii,\ \nu_{30\to 10}^+=-\frac{79}{2250}+\frac{4\sqrt{6}}{1125}\ii$.

\item
Let $\mu_{10,1}^-$ and $\mu_{10,2}^-$ be two different roots of the equation
$$
216090\mu^2+(34140-1485\sqrt{6}\ii)\mu+1556-93\sqrt{6}\ii=0
$$
and define $\nu_{10,1}^-$ and $\nu_{10,2}^-$ by formula~\eqref{eq:sigma10minus_mu_nu}.

\item
Let $\mu_{20\to 10}^-=-\frac{1}{5}+\frac{2\sqrt{6}}{45}\ii,\ \nu_{20\to 10}^-=-\frac{59}{2250}+\frac{16\sqrt{6}}{1125}\ii$.

\item
Let $\mu_{30\to 10}^-=-\frac{1}{5}-\frac{\sqrt{6}}{90}\ii,\ \nu_{30\to 10}^-=-\frac{79}{2250}-\frac{4\sqrt{6}}{1125}\ii$.

\item Let $a_{20,i}$ be the roots of the equation
\begin{equation}\label{eq:a-20-i}
4a_{20,i}^8+16a_{20,i}^7+56a_{20,i}^6+116a_{20,i}^5+217a_{20,i}^4+266a_{20,i}^3+257a_{20,i}^2+172a_{20,i}+52=0\end{equation}
and put
\begin{multline*}
b_{20,i}= \frac{98}{215}a_{20,i}^7+\frac{314}{215}a_{20,i}^6+\frac{1094}{215}a_{20,i}^5
+\frac{376}{43}a_{20,i}^4+\frac{1401}{86}a_{20,i}^3+\frac{6567}{430}a_{20,i}^2+\frac{2874}{215}a_{20,i}+\frac{1271}{215}.
\end{multline*}
Note that $b_{20,i}$ is also a root of equation~\eqref{eq:a-20-i}, and there are only four
possible non-ordered pairs $(a_{20,i},b_{20,i})$; thus we will assume that
$i=1,\ldots,4$.
Define $\mu_{20,i}$ and $\nu_{20,i}$ by formula~\eqref{eq:mu-nu}.

\item Let $a_{30,i}$ be the roots of equation
\begin{multline}\label{eq:a-30-i}
512a_{30,i}^8+2048a_{30,i}^7+4608a_{30,i}^6+5952a_{30,i}^5+5698a_{30,i}^4+\\
+3740a_{30,i}^3+1765a_{30,i}^2+602a_{30,i}+163=0
\end{multline}
and put
\begin{multline*}
b_{30,i}=-\frac{22544}{12639}a_{30,i}^7-\frac{26992}{4213}a_{30,i}^6-\frac{174304}{12639}a_{30,i}^5-\frac{204442}{12639}a_{30,i}^4
-\frac{1036395}{67408}a_{30,i}^3-\\
-\frac{1670095}{202224}a_{30,i}^2-\frac{1606787}{404448}a_{30,i}-\frac{480445}{404448}.
\end{multline*}
Note that $b_{30,i}$ is also a root of equation~\eqref{eq:a-30-i}, and there are only four
possible non-ordered pairs $(a_{30,i},b_{30,i})$; thus we will assume that
$i=1,\ldots,4$.
Define $\mu_{30,i}$ and $\nu_{30,i}$ by formula~\eqref{eq:mu-nu}.
\end{itemize}

We also use the following notation.

\begin{itemize}
\item
Let
$$
a_{20}^+=-\frac{19+2\sqrt{95}}{26}+\frac{-30+3\sqrt{95}}{26}\ii,\ b_{20}^+=\frac{-19+2\sqrt{95}}{26}-\frac{30+3\sqrt{95}}{26}\ii.
$$

\item

Let
$$
a_{20}^-=\frac{-19+2\sqrt{95}}{26}+\frac{30+3\sqrt{95}}{26}\ii,\ b_{20}^-=-\frac{19+2\sqrt{95}}{26}+\frac{30-3\sqrt{95}}{26}\ii.
$$

\item
Let $a_{20,1}^+$ and $b_{20,1}^+$ be two different roots of the equation
$$
x^2+(1+\sqrt{6}\ii)x+\sqrt{6}\ii-1=0.
$$

\item
Let $a_{20,2}^+$ and $b_{20,2}^+$ be two different roots of the equation
$$
49x^2+(-63+35\sqrt{6}\ii)x-45\sqrt{6}\ii+39=0.
$$

\item
Let $a_{20,1}^-$ and $b_{20,1}^-$ be two different roots of the equation
$$
x^2+(1-\sqrt{6}\ii)x-\sqrt{6}\ii-1=0.
$$

\item
Let $a_{20,2}^-$ and $b_{20,2}^-$ be two different roots of the equation
$$
49x^2+(-63-35\sqrt{6}\ii)x+45\sqrt{6}\ii+39=0.
$$

\item
Let $a_{30,1}^+$ and $b_{30,1}^+$ be two different roots of the equation
$$
4x^2+(4-4\ii)x-4\ii+1=0.
$$

\item
Let $a_{30,2}^+$ and $b_{30,2}^+$ be two different roots of the equation
$$
676x^2-(52+260\ii)x+20\ii+121=0.
$$

\item
Let $a_{30,1}^-$ and $b_{30,1}^-$ be two different roots of the equation
$$
4x^2+(4+4\ii)x+4\ii+1=0.
$$

\item
Let $a_{30,2}^-$ and $b_{30,2}^-$ be two different roots of the equation
$$
676x^2-(52-260\ii)x-20\ii+121=0.
$$

\item
Let $a_{30}^+$ and $b_{30}^+$ be two different roots of the equation
$$
98x^2+(14+35\sqrt{6}\ii)x+5\sqrt{6}\ii-12=0.
$$

\item
Let $a_{30}^-$ and $b_{30}^-$ be two different roots of the equation
$$
98x^2+(14-35\sqrt{6}\ii)x-5\sqrt{6}\ii-12=0.
$$
\end{itemize}

We denote by $X_{\mu,\nu}$ the double cover of $Q$ branched over
$S_{\mu,\nu}$. Recall that the singularities of $X_{\mu,\nu}$ are
ordinary double points if and only if the same holds for
the singularities of $S_{\mu,\nu}$.
Define the \emph{defect} $\delta(X_{\mu,\nu})$ of the threefold
$X_{\mu,\nu}$ as
$$
\delta(X_{\mu,\nu})=
|\Sing(S_{\mu,\nu})|+\dim H^0\big(\P^4, \mathcal{I}(3)\big)-35,
$$
where $\mathcal{I}\subset\OO_{\P^4}$
is the ideal sheaf of the set $\Sing(S_{\mu,\nu})$.
Corollaries~\ref{corollary:nu},
\ref{corollary:1-1-1-a-b}, and~\ref{corollary:a-a-b-b-1}
provide a complete information about singularities
of the surfaces $S_{\mu,\nu}$
and thus also of the threefolds $X_{\mu,\nu}$.
Note that given a singular point $P$ of
$S_{\mu,\nu}$ one can check whether~$P$ is an ordinary double
point of $S_{\mu,\nu}$ by making an appropriate (analytic) change
of coordinates so that
$Q$ is given in a neighborhood of $P$ by vanishing of some new
coordinate~$z$, and then finding the rank of the
matrix of the second derivatives of the restriction of
the quartic~\eqref{eq:quartic} to the subspace~\mbox{$z=0$}.
Collecting all this information, we obtain the following result.

\begin{corollary}\label{corollary:classification}
Suppose that the surface $S_{\mu,\nu}$ is singular.
Then one of the cases given in Table~\ref{table:singularities}
occurs. In the first column of Table~\ref{table:singularities}
we place the number of singular points~\mbox{$|\Sing(S_{\mu,\nu})|$}.
In the second column we describe the set $\Sing(S_{\mu,\nu})$ itself,
namely, we list the $\SS_5$-orbits whose union gives
$\Sing(S_{\mu,\nu})$.
Note that in the 12th line we refer to non-ordered pairs
$a,b$ such that $a^2+b^2+3=0$, $a\neq 1$, $b\neq 1$, $a\neq b$ and
$a+b+3\neq 0$; similarly, in
the 20th line we refer to non-ordered pairs
$a,b$ such that $2a^2+2b^2+1=0$, $a\neq 1$, $b\neq 1$, $a\neq b$,
and~\mbox{$2a+2b+1\neq 0$}.
The corresponding pairs $(\mu,\nu)$ are
in the third column. In the forth column there are all pairs
$(\mu,\nu)$ for which
the singularities of $S_{\mu,\nu}$ are not just ordinary double points.
Finally, in the last column there is the defect~$\delta(X_{\mu,\nu})$.
\end{corollary}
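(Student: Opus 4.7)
The plan is to perform a systematic case analysis over all possible $\SS_5$-orbits that can occur in $\Sing(S_{\mu,\nu})$, intersect the resulting constraints on $(\mu,\nu)$, and then compute the invariants demanded by each column of Table~\ref{table:singularities}. By Corollary~\ref{corollary:sing-on-sigma1}, every singular point $P$ of $S_{\mu,\nu}$ satisfies $\sigma_1(P)\neq 0$; combined with Lemma~\ref{lemma:fundamental}, this forces $P$ to lie in one of the orbits $\Sigma_5^\pm$, $\Sigma_{10}^\pm$, $\Sigma_{20}$, $\Sigma_{30}$, $\Sigma_{20}^{a,b}$, or $\Sigma_{30}^{a,b}$ with $a^2+b^2+3=0$ or $2a^2+2b^2+1=0$ respectively. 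Since $\Sing(S_{\mu,\nu})$ is $\SS_5$-invariant (both $Q$ and the quartic~\eqref{eq:quartic} are), it is a disjoint union of such orbits.

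First I would encode each orbit-type as a locus in the $(\mu,\nu)$-plane. Corollary~\ref{corollary:nu} gives two isolated points for $\Sigma_{20}$ and $\Sigma_{30}$, and four affine lines $L_5^\pm$, $L_{10}^\pm$ defined by~\eqref{eq:sigma5plus_mu_nu}--\eqref{eq:sigma10minus_mu_nu} for $\Sigma_5^\pm$, $\Sigma_{10}^\pm$. Eliminating $a,b$ from~\eqref{eq:mu-nu} using the relation $a^2+b^2+3=0$ (resp.\ $2a^2+2b^2+1=0$) via Corollaries~\ref{corollary:1-1-1-a-b}, \ref{corollary:a-a-b-b-1} and Remarks~\ref{remark:quartic-curve-1}, \ref{remark:quartic-curve-2} produces two affine quartic curves $\mathcal{Q}_{20}$ and $\mathcal{Q}_{30}$ parametrizing the pairs $(\mu,\nu)$ for which $S_{\mu,\nu}$ acquires a singular orbit of the corresponding type. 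A single generic $\SS_5$-orbit of size $20$ or $30$ then corresponds to a generic point of $\mathcal{Q}_{20}$ or $\mathcal{Q}_{30}$, and all remaining lines of Table~\ref{table:singularities} arise as special loci on these two quartics.

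Next I would enumerate all possible singular sets by finding the incidences between these loci: self-intersections of $\mathcal{Q}_{20}$ and $\mathcal{Q}_{30}$ (where two distinct orbits $\Sigma_{20}^{a,b}$, resp.\ $\Sigma_{30}^{a,b}$, acquire singularities simultaneously), intersections of $\mathcal{Q}_{20}$ with $\mathcal{Q}_{30}$, and intersections of each quartic with the lines $L_5^\pm$, $L_{10}^\pm$ and with the points corresponding to $\Sigma_{20}$, $\Sigma_{30}$. Parametrizing $\mathcal{Q}_{20}$ by $a$ via $b=-a-(3\mu)^{-1}\sigma_1(P)$ and eliminating using the quadric equation $a^2+b^2+3=0$ leads to the degree-$8$ equation~\eqref{eq:a-20-i} for the self-intersection points of $\mathcal{Q}_{20}$, and the same procedure on $\mathcal{Q}_{30}$ gives~\eqref{eq:a-30-i}. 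The quadratic equations defining $a^\pm_{20,i}$, $a^\pm_{20}$, $a^\pm_{30,i}$, $a^\pm_{30}$ then arise as the defining equations for the intersections of $\mathcal{Q}_{20}$ or $\mathcal{Q}_{30}$ with the individual lines $L_5^\pm$, $L_{10}^\pm$, while the parameter values $\mu^\pm_{5,i}$, $\mu^\pm_{10,i}$, $\mu^{ab,\pm}_5$, $\mu^\pm_{20\to10}$, $\mu^\pm_{30\to10}$ are read off as the $\mu$-coordinates of these intersection points.

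The final (and most tedious, but essentially routine) step is to fill in the last two columns of Table~\ref{table:singularities}. For the fourth column, given a singular point $P\in S_{\mu,\nu}$, I would choose analytic coordinates on $Q$ near $P$ by solving the quadric~\eqref{eq:quadric} for one variable and substituting into~\eqref{eq:quartic}; the point $P$ is an ordinary double point precisely when the Hessian of the resulting three-variable quartic has rank $3$, and the non-ODP rows of the table are exactly the $(\mu,\nu)$ where this Hessian drops rank. For the defect $\delta(X_{\mu,\nu})$, I would compute $h^0(\P^4,\mathcal{I}(3))$ as the dimension of the linear system of cubics vanishing on $\Sing(S_{\mu,\nu})$; since $\Sing(S_{\mu,\nu})$ is $\SS_5$-invariant and of small cardinality, this is a finite-dimensional linear algebra problem, which can be carried out orbit-by-orbit and summed. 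The main conceptual obstacle is simply ensuring the exhaustiveness of the incidence analysis: one must verify that every degeneration of the parameter pair $(a,b)$ on $\mathcal{Q}_{20}$ or $\mathcal{Q}_{30}$ to a value where $a=1$, $b=1$, $a=b$, or where $(a,b)$ lies on the $\sigma_1=0$ locus, is consistently matched with one of the small orbits $\Sigma_5^\pm$, $\Sigma_{10}^\pm$, $\Sigma_{20}$, $\Sigma_{30}$; this is guaranteed by the description of these small orbits as the closure-boundary of $\Sigma_{20}^{a,b}$ and $\Sigma_{30}^{a,b}$ noted right after their definitions.
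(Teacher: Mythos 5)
Your overall route is the same as the paper's: the paper gives essentially no proof beyond the sentence preceding the corollary, namely that Corollaries~\ref{corollary:nu}, \ref{corollary:1-1-1-a-b} and~\ref{corollary:a-a-b-b-1} already list all possible singular points with the corresponding $(\mu,\nu)$, that the table is obtained by intersecting these loci in the $(\mu,\nu)$-plane, that ODP-ness is tested by restricting the quartic to a local chart of $Q$ and computing the rank of the Hessian, and that the defect is a finite linear-algebra computation with cubics through $\Sing(S_{\mu,\nu})$. All of this you reproduce correctly, and in more detail than the paper does. One small omission in your explicit incidence list: you never mention the pairwise intersections of the four lines $L_5^{\pm}$, $L_{10}^{\pm}$ with \emph{each other}, which account for six rows of Table~\ref{table:singularities} (the rows $\Sigma_5^+\cup\Sigma_5^-$, $\Sigma_5^{\pm}\cup\Sigma_{10}^{\pm}$, $\Sigma_{10}^+\cup\Sigma_{10}^-$); your general phrase ``finding the incidences between these loci'' covers this in spirit, but the enumeration as written would miss them.

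There is, however, one concrete error: you assert that eliminating $a,b$ ``leads to the degree-$8$ equation~\eqref{eq:a-20-i} for the self-intersection points of $\mathcal{Q}_{20}$'' (and likewise for~\eqref{eq:a-30-i}). The curves $C_{20}$ and $C_{30}$ have \emph{no} self-intersections arising from distinct orbits: by Remarks~\ref{remark:quartic-curve-1} and~\ref{remark:quartic-curve-2} they are graphs $\nu=p(\mu)$ of polynomials, and the map from unordered pairs $\{a,b\}$ to $\mu$ is injective, since $\mu$ determines $s=a+b$ via a M\"obius transformation and then $ab$ is forced by $a^2+b^2+3=0$ (resp.\ $2a^2+2b^2+1=0$). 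Consistently with this, Table~\ref{table:singularities} contains no row in which two distinct orbits $\Sigma_{20}^{a,b}$, $\Sigma_{20}^{a',b'}$ (or two distinct $\Sigma_{30}^{a,b}$) are simultaneously singular. The quantities $(\mu_{20,i},\nu_{20,i})$ and $(\mu_{30,i},\nu_{30,i})$ defined by~\eqref{eq:a-20-i} and~\eqref{eq:a-30-i} sit in the \emph{fourth} column of the table: they are the parameter values at which the singularities of $S_{\mu,\nu}$ along $\Sigma_{20}^{a,b}$ (resp.\ $\Sigma_{30}^{a,b}$) fail to be ordinary double points, i.e.\ they come from the Hessian rank-drop condition that you describe separately. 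If you carried out your plan literally you would find the self-intersection locus empty and would have to recover these eight exceptional points from the ODP analysis instead; so the error is a misattribution rather than a fatal gap, but the derivation of~\eqref{eq:a-20-i} and~\eqref{eq:a-30-i} as stated would fail.
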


\setlength{\LTleft}{-20cm plus -1fill}
\setlength{\LTright}{\LTleft}
\begin{longtable}{|c|c|c|c|c|}
\caption{Singularities of $S_{\mu,\nu}$}\label{table:singularities}\\
\hline
$\sharp$  &\vstrut $\Sing(S_{\mu,\nu})$ & $(\mu, \nu)$
& non-ODP $(\mu,\nu)$
& $\delta$ \\
\hline
\hline
\endhead

$5$\vstrut & $\Sigma_5^+$ & $\left(\mu,\left(\frac{8}{25}+\frac{6}{25}\ii\right)\mu+\frac{7}{500}+\frac{6}{125}\ii\right)$ &
\begin{minipage}[c]{3.9cm}
$(\mu_{5,1}^+, \nu_{5,1}^+)$,
$(\mu_{5,2}^+, \nu_{5,2}^+)$,
$\left(\mu_{5}^{ab,+},\nu_{5}^{ab,+}\right)$
\end{minipage}
& $0$\\
\hline
$5$\vstrut& $\Sigma_5^-$ & $\left(\mu,\left(\frac{8}{25}-\frac{6}{25}\ii\right)\mu+\frac{7}{500}-\frac{6}{125}\ii\right)$ &
\begin{minipage}[c]{3.9cm}
$(\mu_{5,1}^-, \nu_{5,1}^-)$,
$(\mu_{5,2}^-, \nu_{5,2}^-)$,
$\left(\mu_{5}^{ab,-},\nu_{5}^{ab,-}\right)$
\end{minipage}
 & $0$\\
\hline
$10$\vstrut & $\Sigma_5^+, \Sigma_5^-$ &
$\left(-\frac{1}{5},-\frac{1}{20}\right)$ &
$\varnothing$
& $0$\\
\hline
$10$\vstrut & $\Sigma_{10}^+$ &
$\left(\mu,\left(\frac{8}{25}+\frac{2\sqrt{6}}{75}\ii\right)\mu+\frac{23}{750}+\frac{2\sqrt{6}}{375}\ii\right)$
&
\begin{minipage}[c]{4.5cm}
$(\mu_{10,1}^+, \nu_{10,1}^+)$,
$(\mu_{10,2}^+, \nu_{10,2}^+)$,
$\left(\mu_{20\to 10}^+,\nu_{20\to 10}^+\right)$,
$\left(\mu_{30\to 10}^+,\nu_{30\to 10}^+\right)$
\end{minipage}
& $0$\\
\hline
$10$\vstrut & $\Sigma_{10}^-$ &
$\left(\mu,\left(\frac{8}{25}-\frac{2\sqrt{6}}{75}\ii\right)\mu+\frac{23}{750}-\frac{2\sqrt{6}}{375}\ii\right)$
&
\begin{minipage}[c]{4.5cm}
$(\mu_{10,1}^-, \nu_{10,1}^-)$,
$(\mu_{10,2}^-, \nu_{10,2}^-)$,
$\left(\mu_{20\to 10}^-,\nu_{20\to 10}^-\right)$,
$\left(\mu_{30\to 10}^-,\nu_{30\to 10}^-\right)$
\end{minipage}
& $0$\\
\hline
$15$\vstrut & $\Sigma_5^+, \Sigma_{10}^+$ & $\left(-\frac{1}{5}-\frac{(9+\sqrt{6})}{120}\ii,
\frac{-16+\sqrt{6}}{500}+\frac{(-9-\sqrt{6})}{375}\ii\right)$ &
$\varnothing$
& $0$\\
\hline
$15$\vstrut & $\Sigma_5^+, \Sigma_{10}^-$ &
$\left(-\frac{1}{5}+\frac{(-9+\sqrt{6})}{120}\ii,
\frac{-16-\sqrt{6}}{500}+\frac{(-9+\sqrt{6})}{375}\ii\right)$
&
$\varnothing$
& $0$\\
\hline
$15$\vstrut & $\Sigma_5^-, \Sigma_{10}^+$ & $\left(-\frac{1}{5}+\frac{(9-\sqrt{6})}{120}\ii,
\frac{-16-\sqrt{6}}{500}+\frac{(9-\sqrt{6})}{375}\ii\right)$ &
$\varnothing$
& $0$\\
\hline
$15$\vstrut & $\Sigma_5^-, \Sigma_{10}^-$ & $\left(-\frac{1}{5}+\frac{(9+\sqrt{6})}{120}\ii,
\frac{-16+\sqrt{6}}{500}+\frac{(9+\sqrt{6})}{375}\ii\right)$ &
$\varnothing$
 & $0$\\
\hline
$20$\vstrut & $\Sigma_{10}^+, \Sigma_{10}^-$ & $\left(-\frac{1}{5},
-\frac{1}{30}\right)$ &
$\varnothing$
& $0$\\
\hline
$20$\vstrut & $\Sigma_{20}$ & $\left(-\frac{1}{3},-\frac{1}{6}\right)$ &
$\varnothing$
& $0$\\
\hline
$20$\vstrut & $\Sigma_{20}^{a,b}$ & $\left(-\frac{(a+b+1)}{3\cdot (a+b+3)},\frac{(a^3+b^3+3)\cdot(a+b+1)-3ab\cdot(a+b+3)}{3\cdot(a+b+3)^4}\right)$ &
$(\mu_{20,i},\nu_{20,i}), 1\leq i\leq 4$
& $0$\\
\hline
$25$\vstrut & $\Sigma_5^+, \Sigma_{20}^{a_{20}^+,b_{20}^+}$ &
$\left(-\frac{1}{5}+\frac{1}{5}\ii,-\frac{49}{500}+\frac{8}{125}\ii\right)$
&
$\varnothing$
 & $0$ \\
\hline
$25$\vstrut & $\Sigma_5^-, \Sigma_{20}^{a_{20}^-,b_{20}^-}$ &
$\left(-\frac{1}{5}-\frac{1}{5}\ii,-\frac{49}{500}-\frac{8}{125}\ii\right)$
&
$\varnothing$
 & $0$\\
\hline
$30$\vstrut &
$\Sigma_{10}^+, \Sigma_{20}^{a_{20,1}^+,b_{20,1}^+}$
&
$\left(-\frac{1}{5}+\frac{\sqrt{6}}{15}\ii,-\frac{11}{250}+\frac{8\sqrt{6}}{375}\ii\right)$
 &
$\varnothing$
 & $0$
 \\
\hline
$30$\vstrut &
$\Sigma_{10}^+, \Sigma_{20}^{a_{20,2}^+,b_{20,2}^+}$
&
$\left(-\frac{1}{5}+\frac{\sqrt{6}}{45}\ii,-\frac{83}{2250}+\frac{8\sqrt{6}}{1125}\ii\right)$
 &
$\varnothing$
 & $0$
 \\
\hline
$30$\vstrut &
$\Sigma_{10}^-, \Sigma_{20}^{a_{20,1}^-,b_{20,1}^-}$
 &
$\left(-\frac{1}{5}-\frac{\sqrt{6}}{15}\ii,-\frac{11}{250}-\frac{8\sqrt{6}}{375}\ii\right)$
 & $\varnothing$
 & $0$
 \\
\hline
$30$\vstrut &
$\Sigma_{10}^-, \Sigma_{20}^{a_{20,2}^-,b_{20,2}^-}$
 &
$\left(-\frac{1}{5}-\frac{\sqrt{6}}{45}\ii,-\frac{83}{2250}-\frac{8\sqrt{6}}{1125}\ii\right)$
 & $\varnothing$
 & $0$
 \\
\hline
$30$\vstrut & $\Sigma_{30}$
&
$\left(-\frac{1}{6},-\frac{1}{48}\right)$
&
$\varnothing$
 & $5$
 \\
\hline
$30$\vstrut & $\Sigma_{30}^{a,b}$ &
$\left(-\frac{(a+b+1)}{3\cdot(2a+2b+1)},\frac{(2a^3+2b^3+1)\cdot(a+b+1)-3ab\cdot(2a+2b+1)}{3\cdot(2a+2b+1)^4}\right)$ &
$(\mu_{30,i},\nu_{30,i}), 1\leq i\leq 4$
& $5$\\
\hline
$35$\vstrut &
$\Sigma_{5}^+, \Sigma_{30}^{a_{30,1}^+,b_{30,1}^+}$
&
$\left(-\frac{2}{15}+\frac{1}{15}\ii,-\frac{67}{1500}+\frac{14}{375}\ii\right)$
 & $\varnothing$
 & $5$ \\
\hline
$35$\vstrut &
$\Sigma_{5}^+, \Sigma_{30}^{a_{30,2}^+,b_{30,2}^+}$
&
$\left(-\frac{4}{15}+\frac{1}{15}\ii,-\frac{131}{1500}+\frac{2}{375}\ii\right)$
 & $\varnothing$
 & $5$ \\
\hline
$35$\vstrut &
$\Sigma_{5}^-, \Sigma_{30}^{a_{30,1}^-,b_{30,1}^-}$
 &
$\left(-\frac{2}{15}-\frac{1}{15}\ii,-\frac{67}{1500}-\frac{14}{375}\ii\right)$
& $\varnothing$
 &  $5$\\
\hline
$35$\vstrut &
$\Sigma_{5}^-, \Sigma_{30}^{a_{30,2}^-,b_{30,2}^-}$
 &
$\left(-\frac{4}{15}-\frac{1}{15}\ii,-\frac{131}{1500}-\frac{2}{375}\ii\right)$
& $\varnothing$
 &  $5$\\
\hline
$40$\vstrut & $\Sigma_{10}^+, \Sigma_{30}^{a_{30}^+,b_{30}^+}$ &
$\left(-\frac{1}{5}-\frac{\sqrt{6}}{30}\ii,-\frac{7}{250}-\frac{4\sqrt{6}}{375}\ii\right)$
 & $\varnothing$
 &  $10$\\
\hline
$40$\vstrut & $\Sigma_{10}^-, \Sigma_{30}^{a_{30}^-,b_{30}^-}$ &
$\left(-\frac{1}{5}+\frac{\sqrt{6}}{30}\ii,-\frac{7}{250}+\frac{4\sqrt{6}}{375}\ii\right)$
 & $\varnothing$
 &  $10$\\
\hline
\end{longtable}

\begin{remark}
Let $C_5^+$, $C_5^-$, $C_{10}$, $C_{10}^-$, $C_{20}^o$, $C_{30}^o$ be curves in
the plane~\mbox{$\mathrm{Spec}\,\C[\mu,\nu]\cong\mathbb{A}^2$}
parameterizing pairs of $(\mu, \nu)$ that correspond to surfaces $S_{\mu,\nu}$ with singularities
at the points of the $\SS_5$-orbits $\Sigma_5^+$, $\Sigma_5^-$, $\Sigma_{10}^+$, $\Sigma_{10}^-$, $\Sigma_{20}^{ab}$,
and $\Sigma_{30}^{ab}$, respectively. Let $C_{20}$, $C_{30}$ be closures of $C_{20}^o$, $C_{30}^o$
in~\mbox{$\mathrm{Spec}\,\C[\mu,\nu]$}, respectively.
Then as suggested by notation one has
\begin{align*}
& C_5^+\cap C_{20}=C_5^+\cap C_{30}=\left(\mu_5^{ab,+},\nu_5^{ab,+}\right),\\
& C_5^-\cap C_{20}=C_5^-\cap C_{30}=\left(\mu_5^{ab,-},\nu_5^{ab,-}\right),\\
& C_{10}^+\cap C_{20}=\left(\mu_{20\to 10}^{+},\nu_{20\to 10}^{+}\right),\  C_{10}^+\cap C_{30}=\left(\mu_{30\to 10}^{+},\nu_{30\to 10}^{+}\right),\\
& C_{10}^-\cap C_{20}=\left(\mu_{20\to 10}^{-},\nu_{20\to 10}^{-}\right),\  C_{10}^-\cap C_{30}=\left(\mu_{30\to 10}^{-},\nu_{30\to 10}^{-}\right).
\end{align*}
Corollary~\ref{corollary:nu}, Remark~\ref{remark:quartic-curve-1}, and Remark~\ref{remark:quartic-curve-2}
show that the locus of pairs $(\mu,\nu)$ corresponding to singular
threefolds $X_{\mu,\nu}$ is a union of four lines and two quartic curves in~\mbox{$\mathrm{Spec}\,\C[\mu,\nu]$}.
\end{remark}

\begin{remark}\label{remark:two-parameter}
The affine plane $\mathbb{A}^2=\mathrm{Spec}\,\C[\mu,\nu]$ is a parameter space
for the threefolds~$X_{\mu,\nu}$, but not a moduli space:
there do exist different pairs $(\mu,\nu)$ and $(\mu',\nu')$ such
that $X_{\mu,\nu}\cong X_{\mu',\nu'}$, and moreover the latter isomorphism is
$\A_5$-equivariant (see Remark~\ref{remark:exceptional-values-for-AM} below
for such examples). However, there is still a two-parameter family
of threefolds $X_{\mu,\nu}$ and a one-parameter family of
singular threefolds $X_{\mu,\nu}$ up to isomorphism. Indeed,
every isomorphism between $X_{\mu,\nu}$ and $X_{\mu',\nu'}$ gives
rise to an element of $\mathrm{Aut}(\P^4)\cong\mathrm{PGL}_5(\C)$
that gives an isomorphism $S_{\mu,\nu}\cong S_{\mu',\nu'}$ provided
that $S_{\mu,\nu}$ and $S_{\mu',\nu'}$ have sufficiently nice singularities
(which is the case for a general singular $S_{\mu,\nu}$ by Corollary~\ref{corollary:classification}).
Choose a surface $S_{\mu,\nu}$ that has at most ordinary double
points as singularities, and suppose that there is a one-parameter family of automorphisms
$$A_t\in\mathrm{PGL}_5(\C)$$
such that $A_t(S_{\mu,\nu})=S_{\mu_t,\nu_t}$
for some $(\mu_t,\nu_t)$. Then the action of $\A_5$ is normalized
by~$A_t$ since the automorphism group of $S_{\mu,\nu}$ is finite,
and thus $A_t$ actually commutes with the action of $\A_5$ since the
automorphism group of $\A_5$ itself is finite.
On the other hand, since $I\oplus W_4$ is a sum of two
irreducible $\A_5$-representations, there is only a one-parameter family
of automorphisms of $\P^4$ that commute with the action of~$\A_5$.
Moreover, we already know such one-parameter family, i.e. the family
of coordinate changes used in the proof of Lemma~\ref{lemma:quadric-lambda}.
But a general automorphism from this family does not preserve the quadric~$Q$,
and thus does not map $S_{\mu,\nu}$ to any of the surfaces $S_{\mu',\nu'}$
since $Q$ is the unique quadric passing through the surface~$S_{\mu',\nu'}$.
\end{remark}

\begin{remark}
Vanishing of the defect for the threefolds $X_{\mu,\nu}$ described
in the first five lines of Table~\ref{table:singularities}
can be obtained not only by a direct computation, but also
from~\cite[Proposition~1.5]{Shramov}.
\end{remark}

Now we are ready to make conclusions on (non-)rationality of the threefolds~$X_{\mu,\nu}$.
It follows from~\cite[Theorem~2]{Cynk} (or rather from the proof of this
theorem) that
$$
\rkCl(X_{\mu,\nu})=1+\delta(X_{\mu,\nu})
$$
provided that $X_{\mu,\nu}$ has only ordinary double points as singularities.
On the other hand, by~\cite[Theorem~1.1]{Shramov}
the variety $X_{\mu,\nu}$ is non-rational provided that
the singularities of~$X_{\mu,\nu}$ are ordinary double
points, and $\rkCl(X_{\mu,\nu})=1$.
Therefore, Corollary~\ref{corollary:classification} implies
the following result.

\begin{corollary}\label{corollary:non-rationality-apart-of-30}
The varieties $X_{\mu,\nu}$ with $|\Sing(X_{\mu,\nu})|\neq 30$ listed in Table~\ref{table:singularities} are non-rational
up to a finite number of (possible) exceptions.
%
%
\end{corollary}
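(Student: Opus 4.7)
The plan is to combine the three ingredients assembled just before the statement: the classification of singular $X_{\mu,\nu}$ given by Corollary~\ref{corollary:classification} (equivalently, Table~\ref{table:singularities}), the defect formula $\rkCl(X_{\mu,\nu})=1+\delta(X_{\mu,\nu})$ from \cite[Theorem~2]{Cynk} (valid under the assumption that all singularities of $X_{\mu,\nu}$ are ordinary double points), and the non-rationality criterion \cite[Theorem~1.1]{Shramov} which states that a double quadric with only ordinary double points and $\rkCl=1$ is non-rational.

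First I would isolate from Table~\ref{table:singularities} the rows for which $|\Sing(X_{\mu,\nu})|\ne 30$: these are exactly the ones whose singular locus is a union of orbits among $\Sigma_5^{\pm}$, $\Sigma_{10}^{\pm}$, $\Sigma_{20}$, and $\Sigma_{20}^{a,b}$. Reading off the last column of the table, for each such row the defect $\delta(X_{\mu,\nu})$ equals $0$. Applying \cite[Theorem~2]{Cynk} under the ODP hypothesis therefore gives $\rkCl(X_{\mu,\nu})=1$ for all such $X_{\mu,\nu}$.

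Next I would invoke \cite[Theorem~1.1]{Shramov}: once $X_{\mu,\nu}$ has only ordinary double points and $\rkCl(X_{\mu,\nu})=1$, it is non-rational. Hence every $X_{\mu,\nu}$ in the rows under consideration is non-rational, provided its singularities are ordinary double points. The exceptional cases where this ODP hypothesis may fail are precisely the finitely many values of $(\mu,\nu)$ collected in the fourth column of Table~\ref{table:singularities} for these rows (the pairs $(\mu^{\pm}_{5,i},\nu^{\pm}_{5,i})$, $(\mu^{ab,\pm}_5,\nu^{ab,\pm}_5)$, $(\mu^{\pm}_{10,i},\nu^{\pm}_{10,i})$, $(\mu^{\pm}_{20\to 10},\nu^{\pm}_{20\to 10})$, $(\mu^{\pm}_{30\to 10},\nu^{\pm}_{30\to 10})$, and $(\mu_{20,i},\nu_{20,i})$, $1\le i\le 4$). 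Thus, outside this finite list the conclusion holds, which is exactly the statement of the corollary.

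The only potentially non-trivial step is verifying that $\delta(X_{\mu,\nu})=0$ in every row under consideration, but this has already been absorbed into the construction of Table~\ref{table:singularities}; the main obstacle is purely bookkeeping, namely checking that we have not missed any row with $|\Sing|\ne 30$ in which the number of cubic forms vanishing on $\Sing(S_{\mu,\nu})$ could exceed $35-|\Sing(S_{\mu,\nu})|$. A uniform way to see this is to note that the singular points for these rows lie in $\SS_5$-orbits of length $\le 20$ and impose independent conditions on cubics in $\P^4$ (equivalently, the relevant $\SS_5$-subrepresentations of $H^0(\P^4,\OO(3))$ each have dimension at least $|\Sing(S_{\mu,\nu})|$), so the defect formula forces $\delta=0$ and no additional exceptions beyond those recorded in the fourth column can appear.
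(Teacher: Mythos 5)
Your overall strategy is exactly the paper's: read off the singular loci and defects from Table~\ref{table:singularities}, apply \cite[Theorem~2]{Cynk} to get $\rkCl(X_{\mu,\nu})=1+\delta(X_{\mu,\nu})$ in the ODP case, and then apply \cite[Theorem~1.1]{Shramov}, with the non-ODP pairs from the fourth column supplying the finite list of exceptions. However, there is a concrete bookkeeping error in your execution. The rows with $|\Sing(X_{\mu,\nu})|\neq 30$ are \emph{not} exactly those whose singular locus is a union of orbits among $\Sigma_5^{\pm}$, $\Sigma_{10}^{\pm}$, $\Sigma_{20}$, and $\Sigma_{20}^{a,b}$: you have omitted the four rows with $|\Sing|=35$ (singular locus $\Sigma_5^{\pm}\cup\Sigma_{30}^{a,b}$-type) and the two rows with $|\Sing|=40$ (singular locus $\Sigma_{10}^{\pm}\cup\Sigma_{30}^{a,b}$-type). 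For these rows the last column of the table gives $\delta=5$ and $\delta=10$ respectively, so your assertion that ``for each such row the defect equals $0$'' is false, and Shramov's criterion does not apply to them at all. Your proposed ``uniform way'' to force $\delta=0$ in the last paragraph would likewise fail for these rows.

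The conclusion nonetheless survives, because each of these six omitted rows corresponds to a single pair $(\mu,\nu)$, so they can simply be added to the finite list of exceptions; this is precisely how the paper phrases it (``among $(\mu,\nu)$ such that $|\Sing(X_{\mu,\nu})|\neq 30$ there is only a finite number of cases when the defect does not vanish''). So the fix is trivial, but as written your argument misreads the table and asserts something the table contradicts; you should replace the claim ``all such rows have $\delta=0$'' by ``the rows with nonzero defect are finitely many isolated pairs $(\mu,\nu)$ and go into the exceptional set together with the non-ODP pairs.''
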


\begin{proof}
Note that there is only a finite number of pairs $(\mu,\nu)$ such that
the singularities of $X_{\mu,\nu}$
are at worse than ordinary double points
(they are listed in the fourth column of Table~\ref{table:singularities}).
Moreover, among $(\mu,\nu)$ such that $|\Sing(X_{\mu,\nu})|\neq 30$ there
is only a finite number of cases when
the defect of $X_{\mu,\nu}$ does not vanish
(see the fifth column of Table~\ref{table:singularities}).
\end{proof}

We will see later in Corollary~\ref{corollary:non-rationality-30}
that the varieties $X_{\mu,\nu}$ with $|\Sing(X_{\mu,\nu})|=30$
are non-rational up to a finite number of possible exceptions as well.

\section{Artin--Mumford-type double covers}
\label{section:Artin-Mumford}

In this section we show that certain double quadrics with an action of the group~$\A_5$
are not stably rational.
We use the notation of~\S\ref{section:proj-perm}.

Note that
\begin{equation}\label{eq:Sym2}
\Sym^2(W_4^\vee)\cong I\oplus W_4\oplus W_5.
\end{equation}
Let $W$ be the (unique) subrepresentation of $\Sym^2(W_4^\vee)$ isomorphic to
$I\oplus W_4$. Then $W$ is identified with the vector subspace in the space
of quadratic forms in variables $x_0,\ldots,x_4$
subject to the relation
$$x_0+\ldots+x_4=0,$$
that is spanned by the forms
$x_i^2$, $0\le i\le 4$. Let $\xi_{ii}$, $0\le i\le 4$, be coordinates in $W$,
so that elements of $W$ are written as $\sum \xi_{ii} x_i^2$.

Put $\P^4=\P(W)$. Then $\xi_{ii}$ are homogeneous coordinates in $\P^4$
that are permuted by the groups $\A_5$ and $\SS_5$.
Put
$$
\sigma_k(\xi_{00},\ldots,\xi_{44})=\xi_{00}^k+\ldots+\xi_{44}^k.
$$
Any (reduced) $\A_5$-invariant quadric in $\P^4$ is given by
\begin{equation}\label{eq:quadric-lambda-xi}
\sigma_2(\xi_{00},\ldots,\xi_{44})+\lambda\sigma_1(\xi_{00},\ldots,\xi_{44})^2=0
\end{equation}
for some $\lambda\in\C$, cf.~\eqref{eq:quadric-lambda}.

Let $Y$ be the quartic in $\P(W)$ given by the vanishing of the determinant
of a quadratic form. Then the equation of $Y$ can be written
as
\begin{multline*}
\det\left(
\begin{array}{cccc}
\xi_{11}+\xi_{00} & \xi_{00} & \xi_{00} &\xi_{00}\\
\xi_{00} & \xi_{22}+\xi_{00} & \xi_{00} &\xi_{00}\\
\xi_{00} & \xi_{00} & \xi_{33}+\xi_{00} &\xi_{00}\\
\xi_{00} & \xi_{00} & \xi_{00} &\xi_{44}+\xi_{00}
\end{array}
\right)=\\=\xi_{11}\xi_{22}\xi_{33}\xi_{44}+\xi_{00}\xi_{22}\xi_{33}\xi_{44}+\xi_{00}\xi_{11}\xi_{33}\xi_{44}+
\xi_{00}\xi_{11}\xi_{22}\xi_{44}+\xi_{00}\xi_{11}\xi_{22}\xi_{33}=0.
\end{multline*}

Denote by $Q_{\lambda}$ the quadric given by
equation~\eqref{eq:quadric-lambda-xi}, and denote by $S_{\lambda}$
the intersection of $Y$ with $Q_{\lambda}$.
Suppose that $Q_{\lambda}$ is smooth.
By Lemma~\ref{lemma:quadric-lambda} this happens if and only if~\mbox{$\lambda\neq -\frac{1}{5}$}.
Let $\alpha$ be a root of the equation
$$
5\alpha^2+2\alpha=\lambda,
$$
and put
\begin{equation}\label{eq:xi-to-xi-prime}
\xi_{ii}^\prime=\xi_{ii}+\alpha\sigma_1(\xi_{00},\ldots,\xi_{44}).
\end{equation}
In particular, one has
$$
\xi_{ii}=\xi_{ii}^\prime-\frac{\alpha}{5\alpha+1}\sigma_1(\xi_{00},\ldots,\xi_{44}),
$$
so the change of variables~\eqref{eq:xi-to-xi-prime} is invertible.
The quadric
$Q_{\lambda}$ is given by the equation
$$
\sigma_2(\xi_{00}^\prime,\ldots,\xi_{44}^\prime)=0
$$
(cf. the proof of Lemma~\ref{lemma:quadric-lambda}),
while the quartic $Y$ is given by the equation
\begin{multline*}
\sigma_4(\xi_{00}^\prime,\ldots,\xi_{44}^\prime)-
4\cdot\frac{3\alpha+1}{3(5\alpha+1)}\cdot\sigma_3(\xi_{00}^\prime,\ldots,\xi_{44}^\prime)\sigma_1(\xi_{00}^\prime,\ldots,\xi_{44}^\prime)-\\
-\frac{165\alpha^4+164\alpha^3+66\alpha^2+12\alpha+1}{6(5\alpha+1)^4}\sigma_1(\xi_{00}^\prime,\ldots,\xi_{44}^\prime)^4+\\
+\frac{11\alpha^2+6\alpha+1}{(5\alpha+1)^2}\sigma_2(\xi_{00}^\prime,\ldots,\xi_{44}^\prime)\sigma_1(\xi_{00}^\prime,\ldots,\xi_{44}^\prime)^2=0.
\end{multline*}
Therefore, the surface $S_{\lambda}$ is isomorphic to
the surface $S_{\mu,\nu}$ in the notation of~\S\ref{section:proj-perm} for
$$
\mu=-\frac{3\alpha+1}{3(5\alpha+1)},\ \nu=-\frac{165\alpha^4+164\alpha^3+66\alpha^2+12\alpha+1}{6(5\alpha+1)^4}.
$$
Applying Corollary~\ref{corollary:classification}, we
obtain the following result.

\begin{lemma}
\label{lemma:exc-lambda}
The singular locus of the surface $S_{\lambda}$ is a single
$\SS_5$-orbit
of twenty ordinary
double points, provided that
\begin{equation}\label{eq:exc-lambda}
\lambda\in\C\setminus\{-\frac{1}{5}, 
-1, -\frac{1}{2}\}.
\end{equation}
\end{lemma}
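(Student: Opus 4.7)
The plan is to apply Corollary~\ref{corollary:classification} (equivalently, Table~\ref{table:singularities}) to the pair $(\mu(\alpha),\nu(\alpha))$ produced by the $\SS_5$-equivariant isomorphism $S_\lambda\cong S_{\mu(\alpha),\nu(\alpha)}$ just established, where $\alpha$ is a root of $5\alpha^2+2\alpha=\lambda$. The task becomes to show that for $\lambda\in\C\setminus\{-\frac{1}{5},-1,-\frac{1}{2}\}$ this pair falls into the row corresponding to a $\Sigma_{20}^{a,b}$-type (or the limiting $\Sigma_{20}$-type) singular locus, and that the resulting singularities are ordinary double points.

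The first step is to check directly that $(\mu(\alpha),\nu(\alpha))$ lies on the closure $C_{20}$ of the $\Sigma_{20}^{a,b}$-locus for every $\alpha$ with $5\alpha+1\neq 0$. Writing $s=a+b$ and $p=ab$, the constraint $a^2+b^2+3=0$ of Corollary~\ref{corollary:1-1-1-a-b} becomes $s^2-2p+3=0$. Inverting the relation $\mu=-(s+1)/(3(s+3))$ expresses $s$ as a rational function of $\alpha$; then $p$ is recovered from the quadratic constraint, and substitution into the $\nu$-formula of the same corollary should reproduce $\nu(\alpha)$, confirming the inclusion. The case $\alpha=0$ (so $\lambda=0$) gives the limit $(-\frac{1}{3},-\frac{1}{6})$, i.e. the $\Sigma_{20}$-row pair, which still provides $20$ ODPs in a single $\SS_5$-orbit.

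The second step is to locate the exceptions. By Table~\ref{table:singularities} and the intersection data in the remark preceding the lemma, the pair $(\mu,\nu)\in C_{20}$ fails to describe a single $\SS_5$-orbit of $20$ ODPs only at $(\mu_{20\to10}^\pm,\nu_{20\to10}^\pm)=C_{20}\cap C_{10}^\pm$, at $(\mu_5^{ab,\pm},\nu_5^{ab,\pm})=C_{20}\cap C_5^\pm=C_{20}\cap C_{30}$, or at one of the four points $(\mu_{20,i},\nu_{20,i})$. A short direct substitution shows that $\alpha=-\frac{1}{5}\pm\frac{\sqrt{6}}{10}\ii$ maps to $(\mu_{20\to10}^\pm,\nu_{20\to10}^\pm)$, giving $\lambda=-\frac{1}{2}$, while $\alpha=-\frac{1}{5}\pm\frac{2}{5}\ii$ maps to $(\mu_5^{ab,\pm},\nu_5^{ab,\pm})$, giving $\lambda=-1$. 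Together with $\lambda=-\frac{1}{5}$, at which $Q_\lambda$ itself is singular and the coordinate change used in the identification $S_\lambda\cong S_{\mu(\alpha),\nu(\alpha)}$ breaks down, these are the three exclusions in the statement.

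The main obstacle is to rule out the four ``interior'' points $(\mu_{20,i},\nu_{20,i})$, on which a $\Sigma_{20}^{a,b}$-orbit fails to consist of ordinary double points. One must verify that none of them lies in the image of $\alpha\mapsto(\mu(\alpha),\nu(\alpha))$ apart from the $\alpha$-values already accounted for. Because these points are defined only implicitly through the octic~\eqref{eq:a-20-i}, the cleanest route is an elimination/resultant argument: substitute $(\mu(\alpha),\nu(\alpha))$ into the system~\eqref{eq:mu-nu}, impose the defining condition~\eqref{eq:a-20-i} on $a$ (and the analogous condition on $b$), and check that the resulting polynomial in $\alpha$ has no roots outside $\{-\frac{1}{5},-\frac{1}{5}\pm\frac{2}{5}\ii,-\frac{1}{5}\pm\frac{\sqrt{6}}{10}\ii\}$. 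This elimination is the one genuinely nontrivial computation in the proof; everything else reduces to direct consultation of Table~\ref{table:singularities}.
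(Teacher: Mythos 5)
Your overall strategy coincides with the paper's: the entire published proof is the single sentence ``Applying Corollary~\ref{corollary:classification}, we obtain the following result,'' and your first step correctly identifies what that application amounts to. Indeed, writing $s=a+b$ and inverting $\mu=-\tfrac{s+1}{3(s+3)}$ against $\mu(\alpha)=-\tfrac{3\alpha+1}{3(5\alpha+1)}$ gives $s=2+\tfrac1\alpha$, and substituting this into the $\nu$-formula of Corollary~\ref{corollary:1-1-1-a-b} reproduces $\nu(\alpha)$ exactly; so the image does lie on $C_{20}$, with $\alpha=0$ giving the $\Sigma_{20}$-point $\bigl(-\tfrac13,-\tfrac16\bigr)$, and your identification of $\alpha=-\tfrac15\pm\tfrac25\ii$ with $\lambda=-1$ and $\alpha=-\tfrac15\pm\tfrac{\sqrt6}{10}\ii$ with $\lambda=-\tfrac12$ is correct.

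However, this same computation shows that your steps 2 and 3 cannot be completed as described. Since $\alpha\mapsto s=2+\tfrac1\alpha$ is a M\"obius transformation, the map $\alpha\mapsto(\mu(\alpha),\nu(\alpha))$ sweeps out \emph{every} point of $C_{20}$ except the single point with $s=2$ (equivalently $\mu=-\tfrac15$). In particular the four points $(\mu_{20,i},\nu_{20,i})$ \emph{are} in the image, hit by $\alpha_i=\tfrac{1}{a_{20,i}+b_{20,i}-2}$, so the elimination you propose in step 3 will produce new roots rather than confirm their absence. Step 2 is likewise incomplete: $C_5^{\pm}$ and $C_{10}^{\pm}$ are lines and $C_{20}$ is a quartic, so each intersection consists of four points rather than the single named one, and Table~\ref{table:singularities} itself records the extra intersection points in the rows with $\Sing=\Sigma_5^{\pm}\cup\Sigma_{20}^{a,b}$ and $\Sigma_{10}^{\pm}\cup\Sigma_{20}^{a,b}$. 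Concretely, $\alpha=\tfrac{-3+2\ii}{15}$ gives $\lambda=5\alpha^2+2\alpha=-\tfrac{13}{45}$ and $(\mu(\alpha),\nu(\alpha))=\bigl(-\tfrac15+\tfrac15\ii,\,-\tfrac{49}{500}+\tfrac{8}{125}\ii\bigr)$, which satisfies~\eqref{eq:sigma5plus_mu_nu} and is exactly the table entry with twenty-five singular points; yet $-\tfrac{13}{45}$ is not among the excluded values of $\lambda$. So either these further points of $C_{20}$ must be shown not to be genuine exceptions --- which would require re-deriving the relevant entries of Table~\ref{table:singularities} rather than quoting them --- or the list of excluded $\lambda$ must be enlarged; as written, your argument asserts in step 2 precisely the fact that is in question, and your step 3 rests on an expectation that the parametrization $s=2+\tfrac1\alpha$ refutes.
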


\begin{remark}\label{remark:exceptional-values-for-AM}
Using Table~\ref{table:singularities}, one can describe
singularities of the surfaces $S_{\lambda}$ for exceptional values
$\lambda\in\{-1, -\frac{1}{2}\}$.
Taking $\alpha=-\frac{1}{5}-\frac{2}{5}\ii$
and $\alpha=-\frac{1}{5}+\frac{2}{5}\ii$
gives isomorphisms
$$S_{\mu_5^{ab,+},\nu_5^{ab,+}}
\cong S_{-1}\cong
S_{\mu_5^{ab,-},\nu_5^{ab,-}}$$
respectively, so that $S_{-1}$ has exactly five
singular points, and all of them are worse than ordinary double ones. Taking
$\alpha=-\frac{1}{5}-\frac{\sqrt{6}}{10}\ii$
and
$\alpha=-\frac{1}{5}+\frac{\sqrt{6}}{10}\ii$
gives isomorphisms
$$
S_{\mu_{20\to 10}^+,\nu_{20\to 10}^+}\cong
S_{-\frac{1}{2}}\cong
S_{\mu_{20\to 10}^-,\nu_{20\to 10}^-}
$$
respectively, so that $S_{-\frac{1}{2}}$ has
exactly ten singular points, and all of them are worse than ordinary double ones.
\end{remark}

Let $\Delta$ be the determinantal hypersurface
in
$$\P^9=\P\big(\Sym^2(W_4^\vee)\big),$$
i.e. the hypersurface parameterizing singular
quadrics in $\P^3=\P(W_4)$.
Let $\Delta_i\subset\Delta$, $i=1,2$, be the subvariety
parameterizing quadrics of rank at most~$3-i$.
Then $\Delta$ is singular
along $\Delta_1$, the singularity of
$\Delta$ at every point $P\in\Delta_1\setminus\Delta_2$
is locally isomorphic to a product of a germ of a surface ordinary double
point with $\mathbb A^6$, and
the singularity of $\Delta$
at every point of $\Delta_2$ has multiplicity~$3$. Applying Lemma~\ref{lemma:exc-lambda},
we obtain the following result.

\begin{corollary}\label{corollary:sing-S-lambda}
Suppose that $\lambda$ is like in~\eqref{eq:exc-lambda}.
Then the surface $S_{\lambda}$ intersects the subvariety
$\Delta_1\subset\Delta$ transversally at $20$ points,
and has no singular points outside~$\Delta_1$.
In particular, $Q_{\lambda}$ intersects
$\Delta$ transversally at the points of $\Delta\setminus\Delta_1$, intersects
$\Delta_1$ transversally at the points of $\Delta_1\setminus\Delta_2$,
and is disjoint from~$\Delta_2$.
\end{corollary}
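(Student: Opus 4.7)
My plan combines an explicit combinatorial description of $\P(W)\cap\Delta_1$ with a single tangent space computation at the twenty intersection points.

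First, I describe $\P(W)\cap\Delta_1$. A point $\xi=(\xi_{00}:\ldots:\xi_{44})\in\P(W)$ represents the diagonal quadratic form $\sum\xi_{ii}x_i^2$ restricted to the hyperplane $\{x_0+\cdots+x_4=0\}\subset\C^5$. A direct rank computation with the $4\times 4$ matrix displayed before~\eqref{eq:quadric-lambda-xi} shows that this restricted form has rank $\le 2$ precisely when at most two of the five coordinates $\xi_{ii}$ are nonzero, and rank $\le 1$ precisely when at most one of them is nonzero. Consequently $\P(W)\cap\Delta_1$ is the union of the $\binom{5}{2}=10$ coordinate lines $\ell_{ij}=\{\xi_{kk}=0\text{ for all }k\notin\{i,j\}\}$, meeting pairwise at the five coordinate points that form $\P(W)\cap\Delta_2$; this is the $1$-skeleton of the coordinate $4$-simplex in $\P(W)\cong\P^4$.

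Next, I intersect $Q_\lambda$ with this configuration. On the line $\ell_{ij}$, setting $u=\xi_{ii}$ and $v=\xi_{jj}$, equation~\eqref{eq:quadric-lambda-xi} reduces to $(1+\lambda)(u^2+v^2)+2\lambda uv=0$, a binary quadratic form of discriminant $-4(1+2\lambda)$. For $\lambda$ as in~\eqref{eq:exc-lambda} this discriminant is nonzero and neither $u=0$ nor $v=0$ is a solution (which would force $1+\lambda=0$), so $Q_\lambda\cap\ell_{ij}$ consists of two distinct points of $\ell_{ij}\setminus(\P(W)\cap\Delta_2)$. This already yields $Q_\lambda\cap\Delta_2=\emptyset$ and $|Q_\lambda\cap\Delta_1|=20$, with all twenty points in $\Delta_1\setminus\Delta_2$. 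Since $\P(W)\cap\Delta_1=\Sing(Y)$, each of these twenty points is a singular point of $S_\lambda=Y\cap Q_\lambda$, and by Lemma~\ref{lemma:exc-lambda} they account for \emph{all} the singularities of $S_\lambda$; in particular $S_\lambda$ is smooth outside $\Delta_1$.

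The main technical point is transversality in $\P^9$. Fix $P\in Q_\lambda\cap\ell_{ij}\setminus\Delta_2$; since $P\in\Sing(Y)$, the tangent space $T_PY$ equals $T_P\P(W)$, so $T_PS_\lambda=T_PQ_\lambda$, both three-dimensional (consistent with $S_\lambda$ having an ordinary double point at $P$). Taking a representative $P=(0:0:0:s:t)$ with $st\neq 0$ and the factorization $sx_3^2+tx_4^2=L_+L_-$, the tangent space $T_P\Delta_1$ inside $\Sym^2$ of the dual of the hyperplane is spanned by the first-order deformations $LL_++L'L_-$, and a direct linear-algebra computation identifies $T_P\P(W)\cap T_P\Delta_1$ with the one-dimensional tangent line $T_P\ell_{ij}$. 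The discriminant computation above shows that $Q_\lambda$ meets $\ell_{ij}$ transversally in $\P(W)$, i.e.\ $T_PQ_\lambda\not\supset T_P\ell_{ij}$, and hence $T_PS_\lambda\cap T_P\Delta_1=0$, proving transversality of both $S_\lambda$ and $Q_\lambda$ with $\Delta_1$ at $P$ inside $\P^9$.

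The ``in particular'' statements are now immediate. Away from $\Delta_1$ the hypersurface $\Delta$ is smooth, and transversality of $Q_\lambda$ with $\Delta$ at a point $P$ is equivalent to smoothness of $S_\lambda=Q_\lambda\cap\Delta$ at $P$, which was established in the second paragraph. Transversality on $\Delta_1\setminus\Delta_2$ is the content of the third paragraph, and $Q_\lambda\cap\Delta_2=\emptyset$ follows from the inclusion $Q_\lambda\cap\Delta_2\subset Q_\lambda\cap\Delta_1$ together with the second paragraph. The main obstacle is the tangent space computation of the third paragraph; the other steps reduce to the combinatorial picture of $\P(W)\cap\Delta_1$.
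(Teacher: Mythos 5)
Your proof is correct, and it is substantially more explicit than the paper's own derivation, which consists of the single line ``Applying Lemma~\ref{lemma:exc-lambda}, we obtain the following result.'' The paper's intended argument runs entirely through the classification of $\Sing(S_{\mu,\nu})$: since Lemma~\ref{lemma:exc-lambda} says the singularities of $S_\lambda$ are exactly $20$ ordinary double points, and since $\Delta$ is locally a surface ordinary double point times $\mathbb{A}^6$ along $\Delta_1\setminus\Delta_2$ and has multiplicity $3$ along $\Delta_2$, the reader is expected to supply the linear-algebra fact that an ordinary double point of $S_\lambda=Q_\lambda\cap\Delta$ at a point $P\in\Delta_1\setminus\Delta_2$ forces $T_PQ_\lambda\cap T_P\Delta_1=0$ (the Hessian of the local equation of $\Delta$ restricted to $T_PQ_\lambda$ is degenerate exactly on this intersection), while a point of $Q_\lambda\cap\Delta_2$ would have multiplicity at least $3$ on $S_\lambda$. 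You instead identify $\P(W)\cap\Delta_1$ explicitly as the ten coordinate lines (correct: the displayed $4\times4$ matrix has rank at most $2$ precisely when at least three of the $\xi_{ii}$ vanish), count the $20$ points of $Q_\lambda\cap\Delta_1$ by the discriminant computation on each line, and prove transversality directly from the description $T_P\Delta_1=L_+W_4^\vee+L_-W_4^\vee$ together with $W\cap\left(L_+W_4^\vee+L_-W_4^\vee\right)=\langle x_i^2,x_j^2\rangle$; Lemma~\ref{lemma:exc-lambda} enters only to rule out singular points of $S_\lambda$ away from $\Delta_1$. Both routes rely on Lemma~\ref{lemma:exc-lambda}, but yours makes the transversality assertions self-contained rather than inferred from the type of singularity, at the cost of the tangent-space computation. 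One small point: the equality $\P(W)\cap\Delta_1=\Sing(Y)$ that you assert is in fact true, but your argument only uses (and only needs) the easy inclusion $\P(W)\cap\Delta_1\subseteq\Sing(Y)$; claiming the equality without proof invites an unnecessary verification, since the reverse inclusion requires ruling out points where $\P(W)$ is tangent to $\Delta$ along its smooth locus.
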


Let $T$ be a subvariety of $\P^9\cong\P\big(\Sym^2(W_4^\vee)\big)$.
Let $\phi\colon\mathcal{T}\to T$ be the
restriction of the tautological quadric bundle over
$\P^9$ to $T$.
Our current goal is to find conditions on $T$ to
guarantee that $\mathcal{T}$ is smooth. This is due to the fact that
in our construction we restrict ourselves to a certain subfamily of
quadric threefolds in~$\P^9$; so that we have to check smoothness
of (total spaces of) corresponding bundles explicitly as opposed to more standard approaches that make use
of generality assumptions, see e.g.~\cite[Exercise~7.3.2(i)]{GS}.

\begin{lemma}\label{lemma:det-smooth}
Let $P$ be a point of $T$, and $\mathcal{T}_P$ be the fiber of
$\phi$ over $P$. The following assertions hold.
\begin{itemize}
\item[(i)]
Suppose that $P\not\in\Delta$. Then $\mathcal{T}$ is smooth at every
point of $\mathcal{T}_P$ if and only if $T$ is smooth at $P$.

\item[(ii)]
Suppose that $P\in\Delta\setminus\Delta_1$. Then $\mathcal{T}$ is smooth at every
point of $\mathcal{T}_P$ if and only if $T$ is smooth at $P$, and $T$
intersects $\Delta$ transversally at~$P$.

\item[(iii)]
Suppose that $P\in\Delta_1\setminus\Delta_2$. Then $\mathcal{T}$ is smooth at every
point of $\mathcal{T}_P$ provided that $T$ is smooth at $P$, and $T$
intersects $\Delta_1$ transversally at~$P$.
\end{itemize}
\end{lemma}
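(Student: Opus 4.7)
The plan is to reduce smoothness of $\mathcal{T}$ to a local analysis of the universal equation of the tautological quadric bundle $\mathcal{Q}\subset\P^9\times\P^3$. Writing this equation as $F=\sum_i a_{ii}x_i^2+2\sum_{i<j}a_{ij}x_ix_j$, where $(a_{ij})$ are the natural coordinates on $\P^9=\P(\Sym^2(W_4^\vee))$ and $(x_i)$ are homogeneous coordinates on $\P^3=\P(W_4)$, the partial derivatives $\partial F/\partial a_{ii}=v_i^2$ and $\partial F/\partial a_{ij}=2v_iv_j$ never vanish simultaneously for $v\neq 0$, so $\mathcal{Q}$ itself is smooth. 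At a smooth point $[v]$ of the fiber $\mathcal{Q}_P$ some $x$-partial $\partial F/\partial x_k=2(A(P)v)_k$ is non-zero, so $\phi\colon\mathcal{Q}\to\P^9$ is a smooth morphism at $(P,[v])$ and, by base change, $\mathcal{T}$ is smooth at $(P,[v])$ if and only if $T$ is smooth at $P$. This immediately handles (i), and reduces (ii) and (iii) to pairs $(P,[v])$ with $[v]$ singular in $\mathcal{Q}_P$: at any such point all $x$-partials of $F$ vanish, so $dF|_{(P,[v])}$ lies in $T_P^*\P^9$, and the local equation cutting $\mathcal{T}\subset T\times\P^3$ out of $T\times\P^3$ is the single linear form $dF|_{(P,[v])}$, whose non-vanishing on $T_PT$ is equivalent to smoothness of $\mathcal{T}$ at $(P,[v])$ (assuming $T$ is smooth at $P$).

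For (ii) I would choose coordinates on $\P^3$ so that the unique singular point of $\mathcal{Q}_P$ is $v=e_0$, spanning the kernel of $A(P)$; then only the $a_{00}$-partial of $F$ is non-zero at $(P,v)$, so $dF|_{(P,v)}=da_{00}$. The cofactor formula for $d\det$ at a corank-one symmetric matrix with kernel $e_0$ identifies the tangent hyperplane $T_P\Delta\subset T_P\P^9$ with $\{da_{00}=0\}$. Hence non-vanishing of $da_{00}$ on $T_PT$ is exactly transversality of $T$ to $\Delta$ at $P$, and combined with smoothness of $T$ at $P$ this yields the equivalence claimed in (ii).

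The hardest part will be (iii), because the singular locus of $\mathcal{Q}_P$ is a whole line $\ell\cong\P^1$ and a single transversality hypothesis on $T$ must force smoothness of $\mathcal{T}$ at every point of $\phi^{-1}(\ell)$. Here I would choose coordinates so that $\ker A(P)=\langle e_0,e_1\rangle$, which forces $a_{ij}(P)=0$ whenever $\min(i,j)\leq 1$. A direct computation at $v=(s:t:0:0)\in\ell$ then gives
$$
dF|_{(P,v)}=s^2\,da_{00}+2st\,da_{01}+t^2\,da_{11}.
$$
The standard identification of the tangent space to the symmetric rank-$\leq 2$ locus at a rank-two matrix with kernel $K$ as $\{B\text{ symmetric}:B|_{K\times K}=0\}$ yields $T_P\Delta_1=\{da_{00}=da_{01}=da_{11}=0\}$, so transversality of $T$ and $\Delta_1$ at $P$ amounts to surjectivity of the induced map $T_PT\to T_P\P^9/T_P\Delta_1\cong\C^3$. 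Since $(s^2,2st,t^2)\neq 0$ for every $(s:t)\in\P^1$, the linear form induced by $dF|_{(P,v)}$ on the quotient is non-zero, so its pull-back to $T_PT$ is non-zero by surjectivity; together with smoothness of $T$ at $P$ this gives smoothness of $\mathcal{T}$ at every point of $\mathcal{T}_P$, completing (iii).
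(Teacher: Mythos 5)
Your proposal is correct and follows essentially the same route as the paper: reduce to the differential of the universal quadric equation at singular points of the fibers, and identify the conormal directions of $\Delta$ (via the cofactor of the kernel vector) and of $\Delta_1$ (the three coordinates supported on the kernel plane) so that transversality forces the relevant linear form to be non-zero on $T_PT$. The only cosmetic differences are that you phrase part (i) via smoothness of the morphism and base change instead of a direct Jacobian rank count, and you quote the standard tangent-space description of the symmetric determinantal locus where the paper differentiates the $3\times 3$ minors; the substance is identical.
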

\begin{proof}
Choose homogeneous coordinates $z_0,\ldots,z_3$ in $\P^3$
and homogeneous coordinates $\zeta_{ij}$, $0\le i\le j\le 3$, in~$\P^9$.
Suppose that $T$ is given in $\P^9$ by equations
\begin{equation}\label{eq:T}
F_1(\zeta_{ij})=\ldots=F_k(\zeta_{ij})=0.
\end{equation}
Then $\mathcal{T}$ is given in $\P^9\times\P^3$ by equations~\eqref{eq:T} and the equation
\begin{equation}\label{eq:T-upstairs}
\sum\limits_{0\le i\le j\le 3} \zeta_{ij}z_iz_j=0.
\end{equation}

Let $r$ be the codimension of $T$ in $\P^9$, so that $\mathcal{T}$ has codimension~\mbox{$r+1$}
in~\mbox{$\P^9\times\P^3$}.
The variety $\mathcal{T}$ is non-singular at a point $\PP\in\mathcal{T}_P$
if and only if the matrix $M$ of partial derivatives of equations~\eqref{eq:T} and~\eqref{eq:T-upstairs}
with respect to the variables $\zeta_{ij}$ and $z_i$
has rank~\mbox{$r+1$} at~$\PP$. If $T$ is singular at $P$, then the matrix of partial derivatives
of equations~\eqref{eq:T} has rank at most $r-1$ at $P$, so that $\mathcal{T}$ is singular at every
point of the fiber~$\mathcal{T}_P$.

From now on we assume that $T$ is non-singular at $P$.
Note that the partial derivatives of~\eqref{eq:T} with respect to
variables $z_i$ vanish at every point,
while for a smooth point of $\mathcal{T}_P$ there exists a partial derivative
of~\eqref{eq:T-upstairs} with respect to some variable $z_i$ that does not
vanish at that point. Thus we see that the matrix $M(\PP)$
has rank $r+1$ provided that the quadric $\mathcal{T}_P$ is non-singular at~$\PP$.
In particular, this proves assertion~(i).

Suppose that $P\in\Delta\setminus\Delta_1$. We have already seen
that $\mathcal{T}$ is smooth at $\PP$ provided that~$\PP$ is a smooth
point of~$\mathcal{T}_P$. So we suppose that $\PP$ is the (unique)
singular point of~$\mathcal{T}_P$. After a suitable change of
coordinates $z_0,\ldots,z_3$ we may also assume that the point~$P$
corresponds to the quadratic form
\begin{equation*}
z_0^2+z_1^2+z_2^2=0,
\end{equation*}
and $\PP$ corresponds to the point $(0:0:0:1)$ in~$\P^3$.
The only partial derivative of~\eqref{eq:T-upstairs} that does not vanish at~$\PP$
is the one with respect to~$\zeta_{33}$. Similarly, the only partial derivative
of the equation of $\Delta$ that does not vanish at the point $P$
is the one with respect to~$\zeta_{33}$. Therefore, singularity of $\mathcal{T}$ at $\PP$
and transversality of $T$ and $\Delta$ at $P$
are given by the same condition. This proves assertion~(ii).

Now suppose that $P\in\Delta_1\setminus\Delta_2$, and $\PP$ is a
singular point of~$\mathcal{T}_P$. After a suitable change of
coordinates $z_0,\ldots,z_3$ we may also assume that the point $P$
corresponds to the quadratic form
\begin{equation*}
z_0^2+z_1^2=0,
\end{equation*}
and $\PP$ corresponds to the point $(0:0:\alpha:\beta)$ in~$\P^3$.
The partial derivatives of~\eqref{eq:T-upstairs} at $\PP$ with respect to variables
$\zeta_{22}$, $\zeta_{23}$, and $\zeta_{33}$ equal $\alpha^2$, $\beta^2$ and $\alpha\beta$,
respectively, while all other partial derivatives of~\eqref{eq:T-upstairs}
vanish at $\PP$. The subvariety $\Delta_1\subset\P^9$ is given by vanishing of the
\mbox{$3\times 3$}-minors of a matrix of a quadratic form. All partial derivatives of these
minors except those with respect to $\zeta_{22}$, $\zeta_{23}$, and $\zeta_{33}$ vanish at
the point~$P$.
Therefore, transversality of $T$ and $\Delta_1$ at $P$ implies that $\mathcal{T}$
is non-singular at~$\PP$. This proves assertion~(iii).
\end{proof}

\begin{remark}
It would be interesting to have an ``if and only if'' condition
for smoothness of the variety $\mathcal{T}$ in terms
of the varieties $T$, $\Delta$ and~$\Delta_i$.
\end{remark}

In the remaining part of this section we
suppose that $\lambda$ is like in~\eqref{eq:exc-lambda}.
Let $X_{\lambda}$ be a double cover of $Q_{\lambda}$ branched over
$S_{\lambda}$. Put
$$X_{\lambda}^o=X_{\lambda}\setminus\Sing(X_{\lambda}),\quad Q_{\lambda}^o=Q_{\lambda}\setminus\Sing(S_{\lambda}) .$$
Then $X_{\lambda}^o$ is a double cover of $Q_{\lambda}^o$. Let
$\psi\colon\mathcal{Q}_{\lambda}\to Q_{\lambda}$ be the
restriction of the tautological quadric bundle over
$\P\big(\Sym^2(W_4^\vee)\big)$ to $Q_{\lambda}$.
Corollary~\ref{corollary:sing-S-lambda} shows that
$X_{\lambda}^o$ can be identified with a variety parameterizing
families of lines on quadrics corresponding to the points
of~$Q_{\lambda}^o$, and that there
is a natural $\P^1$-bundle $\pi\colon\mathcal{P}_{\lambda}^o\to X_{\lambda}^o$
such that the points of $\mathcal{P}_{\lambda}^o$
parameterize the lines on such quadrics.
The following result is
obtained in a way identical to a well-known approach
to rationality of double solids
(cf.~\cite[Appendix]{Aspinwall},
\mbox{\cite[Lemma~3.2]{IlievKatzarkovPrzyalkowski}},
\cite[Exercise~7.3.2]{GS}).

\begin{lemma}[{cf.~\cite[Lemma~3.2]{IlievKatzarkovPrzyalkowski}}]
\label{lemma:no-section}
The $\P^1$-bundle $\pi$ is not a projectivization of
a vector bundle.
\end{lemma}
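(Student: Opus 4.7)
The plan is to follow the standard approach used for quartic double solids (see \cite[Appendix]{Aspinwall}, \cite[Lemma~3.2]{IlievKatzarkovPrzyalkowski}, \cite[Exercise~7.3.2]{GS}), adapted to the double quadric setting. Suppose for contradiction that $\pi = \P(E)$ for some rank-$2$ vector bundle $E$ on $X_\lambda^o$; then $\pi$ admits a regular section $s\colon X_\lambda^o \to \mathcal{P}_\lambda^o$.

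First I would unpack the geometric meaning of such a section. A point $x \in X_\lambda^o$ encodes a pair $(q, r)$ where $q \in Q_\lambda^o$ is a smooth quadric in $\P^3$ and $r$ is a ruling of $q$, and the fiber $\pi^{-1}(x)$ parameterizes the lines in $r$. The section $s$ thus picks out a specific line $\ell(q,r) \subset q$ for every such $(q,r)$. Applying $s$ to the two points of $\tau^{-1}(q)$ yields two incident lines on $q$, one from each ruling, whose intersection is a well-defined point $p(q) \in q$. In this way $s$ determines a regular section $\tilde s\colon Q_\lambda^o \to \mathcal{Q}_\lambda^o$ of the universal quadric bundle $\psi$.

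Next I would study the behavior of $\tilde s$ near the branch locus $S_\lambda = Q_\lambda \cap \Delta$. Since $\psi$ is projective and $Q_\lambda \setminus Q_\lambda^o = S_\lambda$ has codimension one in $Q_\lambda$, the section $\tilde s$ extends to a rational section of $\psi$ over all of $Q_\lambda$. By Corollary~\ref{corollary:sing-S-lambda}, $S_\lambda$ has exactly $20$ nodes, all lying in $\Delta_1 \setminus \Delta_2$, and $Q_\lambda$ meets both $\Delta$ and $\Delta_1$ transversally there; by Lemma~\ref{lemma:det-smooth} the total space $\mathcal{Q}_\lambda$ is smooth at every point of these $20$ fibers. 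Going around a small loop in $Q_\lambda$ about such a node $P$ exchanges the two rulings of the smooth quadrics nearby (this is the deck transformation of $\tau$), so the local monodromy of the tautological family of rulings is nontrivial. A direct local model computation — choosing suitable coordinates so that $q$ degenerates to a pair of planes at $P$ and tracing the intersection point $p(q)$ as $q \to q_P$ — shows that $\tilde s$ is forced to have indeterminacy at $P$ whose resolution contributes a nontrivial class (an odd intersection number with the exceptional fiber of a resolution of $\mathcal{Q}_\lambda$, equivalently a nontrivial residue of the associated Brauer class). Accumulating this residue over all $20$ nodes, or equivalently comparing with the Brauer-class description of $\pi$ as an element of $\mathrm{Br}(X_\lambda^o)$, yields a contradiction with the assumed existence of the regular section~$s$.

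The main obstacle I anticipate is the explicit local analysis at a single node: one has to verify that the section $\tilde s$ cannot be made regular there, regardless of how the original $s$ is chosen. This is exactly the step where the transversality furnished by Corollary~\ref{corollary:sing-S-lambda} and the smoothness of $\mathcal{Q}_\lambda$ granted by Lemma~\ref{lemma:det-smooth} are essential, since they reduce the local picture to a standard $A_1$-type model for which the obstruction is computed in the references cited above.
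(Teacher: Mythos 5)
Your first step --- converting the hypothetical section $s$ of $\pi$ into a rational section $\sigma'$ of the quadric bundle $\psi\colon\mathcal{Q}_{\lambda}\to Q_{\lambda}$ by intersecting the two lines chosen in the two rulings --- is exactly how the paper's proof begins (note that a projectivized vector bundle only guarantees a \emph{rational} section, not a regular one, but that is all one needs). The step where you derive the contradiction, however, is a genuine gap. You propose a local analysis at the $20$ nodes of $S_{\lambda}$: monodromy exchanging the rulings, forced indeterminacy of $\sigma'$ at each node, and a ``residue'' accumulated over the nodes. This does not work as stated, for two reasons. First, a rational section is allowed to be indeterminate along a locus of codimension $\ge 2$, so indeterminacy at finitely many points is not in itself contradictory; moreover, the monodromy exchanging the two rulings lives around the whole branch divisor $S_{\lambda}$, not around its nodes (a small loop ``about a node'' in the threefold $Q_{\lambda}$ is contractible in the complement of that node). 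Second, and more fundamentally, the obstruction is not local: the Brauer class defined by $\pi$ is unramified --- that is precisely what makes the Artin--Mumford-type construction work in Corollary~\ref{corollary:non-stably-rational} --- so all residues vanish and no accumulation of local contributions can detect it.

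The paper's proof replaces your local analysis by a global parity argument. Let $R\subset\mathcal{Q}_{\lambda}$ be the closure of the image of $\sigma'$; then $R\cdot F=1$ for a fiber $F$ of $\psi$. The fivefold $\mathcal{Q}_{\lambda}$ is a \emph{smooth} divisor of bidegree $(1,2)$ in $Q_{\lambda}\times\P^3$ --- smoothness is the only place where Lemma~\ref{lemma:det-smooth} and Corollary~\ref{corollary:sing-S-lambda} are used, not to set up a local model at the nodes --- so by the Lefschetz hyperplane section theorem the class of $R$ in $H^4(\mathcal{Q}_{\lambda},\mathbb{Z})$ is the restriction of some $\bar R\in H^4(Q_{\lambda}\times\P^3,\mathbb{Z})$. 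The class of $F$ in $H^8(Q_{\lambda}\times\P^3,\mathbb{Z})$ is divisible by $2$ (a quadric surface in a fiber $\{pt\}\times\P^3$ is homologous to twice a plane), hence $R\cdot F=\bar R\cdot F$ is even, contradicting $R\cdot F=1$. To repair your argument you should substitute this global computation (or an equivalent one) for the local residue analysis.
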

\begin{proof}
Suppose that $\pi$ is a projectivization of some vector bundle.
Then there exists a rational section $\sigma\colon X_{\lambda}^o\dasharrow\mathcal{P}_{\lambda}^o$
of $\pi$. Note that $\sigma$ defines a family $\Sigma$ of lines on
quadrics corresponding to the points
of $Q_{\lambda}^o$, and for a (smooth) quadric $M$ corresponding
to a general point of $Q_{\lambda}^o$ there are exactly two lines in $\Sigma$
that are contained in $M$, one of them in each of the two one-parameter
families of lines on~$M$. Taking an intersection point of the latter
two lines, we define a rational section $\sigma'$ of the quadric bundle~$\psi$.
Let $R\subset\mathcal{Q}_{\lambda}$ be the closure of the image of $\sigma'$.
Then for a fiber $F$ of $\psi$ one has an intersection $R\cdot F=1$
on~$\mathcal{Q}_{\lambda}$.

The fivefold $\mathcal{Q}_{\lambda}$ is naturally embedded
into the sixfold $Q_{\lambda}\times\P^3$ as a divisor of
bi-degree~\mbox{$(1,2)$}.
Moreover, $\mathcal{Q}_{\lambda}$ is smooth
by Lemma~\ref{lemma:det-smooth} and Corollary~\ref{corollary:sing-S-lambda}.

By the Lefschetz hyperplane section theorem, there is an element
$$\bar{R}\in H^4(Q_{\lambda}\times\P^3,\mathbb{Z})$$
that restricts to the element $R\in H^4(\mathcal{Q}_{\lambda},\mathbb{Z})$.
The fiber $F$ of $\psi$ can be considered both as an element
of $H^6(\mathcal{Q}_{\lambda},\mathbb{Z})$ and of
$H^8(Q_{\lambda}\times\P^3,\mathbb{Z})$. In the latter group
it is divisible by~$2$.
The intersection of $F$ with $R$ on $\mathcal{Q}_{\lambda}$
equals the intersection of $F$ with $\bar{R}$ on $Q_{\lambda}\times\P^3$.
On the other hand, the former
intersection equals $1$ while the latter intersection is even, which
gives a contradiction.
\end{proof}

\begin{corollary}[{cf.~\cite[Theorem~3.3]{IlievKatzarkovPrzyalkowski}}]
\label{corollary:non-stably-rational}
Let $\tilde{X}_{\lambda}$ be a blow up of
the singular points of~$X_{\lambda}$. Then
there is a non-trivial torsion element in the cohomology group~\mbox{$H^3(\tilde{X}_{\lambda},\mathbb{Z})$}.
In particular,
the varieties $\tilde{X}_{\lambda}$ and
$X_{\lambda}$ are not stably rational.
\end{corollary}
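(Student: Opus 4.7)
The plan is to apply the Artin--Mumford-type obstruction: a smooth projective complex variety which has non-trivial torsion in $H^3(-,\mathbb{Z})$ cannot be stably rational. In our setting the torsion class comes from the failure of $\pi$ to split, detected by a $2$-torsion Brauer class.

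First I would produce a $\P^1$-bundle $\tilde\pi\colon\tilde{\mathcal{P}}_{\lambda}\to\tilde{X}_{\lambda}$ extending $\pi$. The singularities of $X_{\lambda}$ are the twenty ordinary double points lying over the points of $\Delta_1\setminus\Delta_2$ at which $Q_{\lambda}$ meets $\Delta_1$ transversally (Corollary~\ref{corollary:sing-S-lambda}), so the total space $\mathcal{Q}_{\lambda}$ of the quadric bundle $\psi$ is smooth by Lemma~\ref{lemma:det-smooth}(iii). A local analysis at each corank-$1$ quadric in $\P^3$ should show that the two families of lines on smooth nearby quadrics, which merge into a single ruling over an ODP, fit together with the two rulings of the exceptional $\P^1\times\P^1$ of $\tilde{X}_{\lambda}\to X_{\lambda}$, giving the desired extension $\tilde\pi$.

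Next I would show that $\tilde\pi$ is not the projectivization of any vector bundle by essentially replaying the proof of Lemma~\ref{lemma:no-section}, with $\tilde{X}_{\lambda}$ in place of $X_{\lambda}^o$. A rational section of $\tilde\pi$ would again produce a rational section $R\subset\mathcal{Q}_{\lambda}$ of $\psi$ with $R\cdot F=1$ for a fiber $F$ of $\psi$; but $\mathcal{Q}_{\lambda}\subset Q_{\lambda}\times\P^3$ is a smooth divisor of bi-degree $(1,2)$, so the Lefschetz hyperplane theorem lifts $[R]$ to $H^4(Q_{\lambda}\times\P^3,\mathbb{Z})$, where $[F]$ is divisible by $2$, forcing $R\cdot F$ to be even.

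Hence $\tilde\pi$ is a $\mathrm{PGL}_2$-torsor that is not a projectivization, and therefore defines a non-trivial class $\alpha\in\mathrm{Br}(\tilde{X}_{\lambda})$ of order exactly $2$. The exponential sequence on the smooth projective threefold $\tilde{X}_{\lambda}$ converts $\alpha$ into a non-trivial $2$-torsion element $\eta\in H^3(\tilde{X}_{\lambda},\mathbb{Z})$. Since torsion in $H^3$ is a stable birational invariant (and vanishes for $\P^n$), the threefold $\tilde{X}_{\lambda}$ is not stably rational, and neither is $X_{\lambda}$, which is birational to it. The main obstacle is the extension step: one must verify that $\pi$ extends as an honest $\P^1$-bundle (and not merely as a singular conic bundle) across the twenty exceptional $\P^1\times\P^1$'s, which requires combining the smoothness of $\mathcal{Q}_{\lambda}$ with the explicit geometry of the rulings on rank-$3$ quadrics in $\P^3$ matching the rulings of the ODP resolution.
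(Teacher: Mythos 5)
Your overall strategy (a non-split $\P^1$-bundle gives a non-trivial $2$-torsion Brauer class, hence torsion in $H^3$, hence an obstruction to stable rationality) is the right one, but the first step of your argument --- extending $\pi$ to an honest $\P^1$-bundle $\tilde\pi$ over all of $\tilde{X}_{\lambda}$ --- is a genuine gap, and the local picture you base it on is inaccurate. The twenty nodes of $X_{\lambda}$ lie over the points of $Q_{\lambda}\cap\Delta_1$, i.e.\ over quadrics of rank~$2$ (pairs of distinct planes); it is over the rank-$3$ locus $\Delta\setminus\Delta_1$ (the branch divisor of $\tau$) that the two rulings of a nearby smooth quadric merge into the single ruling of a cone. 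Over a pair of planes the Fano scheme of lines is two-dimensional (two copies of~$\P^2$), so there is no evident matching of ``rulings'' with the two rulings of the exceptional $\P^1\times\P^1$, and you give no actual construction of $\tilde\pi$ --- you only flag it as ``the main obstacle.'' As written, the proof does not go through.

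The paper avoids the extension entirely. Since each exceptional divisor $E_i\cong\P^1\times\P^1$ of $\tilde{X}_{\lambda}\to X_{\lambda}$ is simply connected, the residue of any Brauer class of $X_{\lambda}^o=\tilde{X}_{\lambda}\setminus\bigcup E_i$ along $E_i$ (which lives in $H^1$ of $E_i$ with torsion coefficients) vanishes, and purity for the Brauer group of the smooth projective threefold $\tilde{X}_{\lambda}$ gives $\mathrm{Br}(X_{\lambda}^o)\cong\mathrm{Br}(\tilde{X}_{\lambda})$; this is the content of the appendix of \cite{Aspinwall} that the paper cites. Hence the non-trivial class produced by Lemma~\ref{lemma:no-section} on $X_{\lambda}^o$ already lives on $\tilde{X}_{\lambda}$, with no need to extend the $\P^1$-bundle geometrically and no need to rerun the Lefschetz argument on $\tilde{X}_{\lambda}$ (restriction to the open part is injective on Brauer groups, so non-triviality downstairs is automatic). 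If you replace your extension step by this Brauer-group isomorphism, the remainder of your argument --- passing to torsion in $H^3(\tilde{X}_{\lambda},\mathbb{Z})$ using $H^2(\OO_{\tilde{X}_{\lambda}})=0$, and invoking the stable birational invariance of this torsion as in \cite{ArtinMumford} --- is correct and coincides with the paper's proof.
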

\begin{proof}
By Lemma~\ref{lemma:no-section} the $\P^1$-bundle $\pi$ defines a non-trivial
element in the Brauer group
$$\mathrm{Br}(X_{\lambda}^o)\cong\mathrm{Br}(\tilde{X}_{\lambda}).$$
The latter gives a non-trivial torsion element in $H^3(\tilde{X}_{\lambda},\mathbb{Z})$,
see e.g.~\cite[Appendix]{Aspinwall}.
By~\cite[Proposition~1]{ArtinMumford} such element
provides an obstruction to stable rationality of $\tilde{X}_{\lambda}$, and
thus also of~$X_{\lambda}$.
\end{proof}

Applying Corollary~\ref{corollary:non-stably-rational}
together with~\cite[Theorem~0.1]{Voisin}, we
prove Theorem~\ref{theorem:main}(iii). In fact, this gives the
the following more general result
(that is actually well-known to experts).

\begin{proposition}
Let $X$ be a very general double cover of a (smooth) three-dimensional
quadric branched over an intersection with a quartic. Then
$X$ is not stably rational.
\end{proposition}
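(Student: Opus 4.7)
The plan is to apply Voisin's degeneration principle \cite[Theorem~0.1]{Voisin} (or, in the more flexible form, the Colliot-Th\'el\`ene--Pirutka refinement) to the family of double quadrics, using the varieties $X_{\lambda}$ constructed in this section as the special fibers that obstruct stable rationality.

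First I would set up the parameter space. Let $B\subset\P(H^0(\P^4,\OO_{\P^4}(2)))\times \P(H^0(\P^4,\OO_{\P^4}(4)))$ denote the open subset parameterizing pairs $(Q,F)$ such that $Q$ is a smooth quadric threefold and the intersection $S=Q\cap\{F=0\}$ is a reduced surface with only isolated singularities of sufficiently mild type so that the double cover is defined and has terminal singularities; include in $B$ the locus corresponding to a smooth branch surface, so that the very general fiber of the resulting flat family $\pi\colon\mathcal{X}\to B$ is smooth. By Lemma~\ref{lemma:lifting-shifting} this family contains, after a suitable base change, the one-parameter sub-family of varieties $X_\lambda$ from this section, each of which has, by Corollary~\ref{corollary:sing-S-lambda} and Lemma~\ref{lemma:exc-lambda}, exactly twenty ordinary double points (for $\lambda$ as in~\eqref{eq:exc-lambda}).

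The next step is to invoke the specialization principle. A small resolution (or crepant resolution) $\tilde X_\lambda\to X_\lambda$ of the twenty ODPs is a universally $\mathrm{CH}_0$-trivial resolution in the sense of Colliot-Th\'el\`ene--Pirutka: the exceptional locus over each node is a smooth quadric surface, which is rational and has trivial Chow group of zero-cycles. By Corollary~\ref{corollary:non-stably-rational} the variety $\tilde X_\lambda$ carries a non-trivial torsion class in $H^3(\tilde X_\lambda,\mathbb Z)$, and is in particular not stably rational. Voisin's theorem then asserts that in a flat family whose very general fiber is smooth and whose special fiber admits a universally $\mathrm{CH}_0$-trivial resolution that is not stably rational, the very general fiber itself is not stably rational. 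Applied to $\pi$ and the specialization to $X_\lambda$ (any single $\lambda$ satisfying~\eqref{eq:exc-lambda} is enough), this yields the conclusion.

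The main obstacle is the verification that the specialization machinery applies in the form needed, and this splits into two points. First, one must confirm that ordinary double points admit universally $\mathrm{CH}_0$-trivial resolutions; this is standard (see Voisin and Colliot-Th\'el\`ene--Pirutka) but should be mentioned. Second, one must ensure that the parameter space $B$ can be chosen so that both the very general smooth member and the nodal member $X_\lambda$ sit in a common flat family; this amounts to checking that the forms defining $X_\lambda$ are deformable within the linear system of quartic sections of smooth quadrics to a smooth branched cover, which follows at once from the openness of the locus of smooth members in the Hilbert scheme of complete intersections of type $(2,4)$ in $\P^4$. Once these are in place the result follows without any further computation, and, as noted by the authors, the statement is in fact more general than the $\A_5$-equivariant version needed for Theorem~\ref{theorem:main}(iii).
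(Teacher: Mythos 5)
Your proposal is correct and follows essentially the same route as the paper: the authors likewise deduce the statement by combining Corollary~\ref{corollary:non-stably-rational} (the Brauer/torsion obstruction on a resolution of the twenty-nodal $X_{\lambda}$) with Voisin's degeneration theorem~\cite[Theorem~0.1]{Voisin}. You merely spell out the details the paper leaves implicit (the parameter space, the universal $\mathrm{CH}_0$-triviality of the resolution of ordinary double points), which is fine.
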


\begin{remark}\label{remark:vse-ukradeno-do-nas}
Since double covers of quadrics branched over intersections with
quartics are degenerations of quartic hypersurfaces,
Corollary~\ref{corollary:non-stably-rational} implies
that a very general quartic threefold is stably non-rational.
This result is known from~\cite{CTP}, see also~\cite{HT},
where another approach was used to obtain it.
Moreover, the approach to stable non-rationality of
quartic threefolds via double quadrics was used
in~\cite{ST}, and is actually not affected by a gap in~\cite{ST}
that becomes crucial only in higher dimensions.
\end{remark}

\section{Projectivization of the irreducible representation}
\label{section:proj-irreducible}

In this section we study the unique double quadric with an action of the
group~$\A_5$ such that the corresponding quadric
is embedded into the projectivization of the irreducible
five-dimensional representation of the group~$\A_5$,
i.e. the variety~$X_{irr}$ in the notation of Theorem~\ref{theorem:main}.
This will also appear to be the unique double quadric with an action of the
group~$\A_6$.

Recall that $W_5$ denotes the unique five-dimensional
irreducible representation of the group $\A_5$.
Note that $W_5$ can be also considered as a representation
of the groups~$\A_6$ and~$\SS_6$.
Put $\P^4=\P(W_5)$.
One has
\begin{equation}\label{eq:quadrics-in-W5}
\dim\Hom\big(I,\Sym^2(W_5^\vee)\big)=1, \quad
\dim\Hom\Big(I,\Sym^4(W_5^\vee)\big)=2,
\end{equation}
where $I$ and $W_5$ are considered as representations
of either of the groups $\A_5$, $\A_6$, or~$\SS_6$.
In particular, in $\P^4$ there is a unique quadric $Q$ invariant
under the group~$\A_5$ (or $\A_6$, or~$\SS_6$, respectively),
and a unique intersection $S$ of $Q$ with a quartic invariant
under the group~$\A_5$ (or $\A_6$, or $\SS_6$, respectively).
They can be described as follows.

Let $W\cong I\oplus W_5$ be the six-dimensional permutation representation
of the group~$\SS_6$.
Put $\P^5=\P(W)$, and let $y_0,\ldots,y_5$ be the homogeneous
coordinates in $\P^5$ that are permuted by $\SS_6$. Put
\begin{equation}\label{eq:sigma-in-W5}
\sigma_k(y_0,\ldots,y_5)=y_0^k+\ldots+y_5^k.
\end{equation}
Equation $\sigma_1=0$ defines the linear subspace $\P^4\subset\P^5$.
The quadric $Q$ is defined by an equation
$$
\sigma_1=\sigma_2=0,
$$
and reduced $\SS_6$-invariant (respectively, $\A_6$-invariant,
$\A_5$-invariant) quartics are defined by equations
\begin{equation}\label{eq:W5-quartics}
\sigma_1=\sigma_4-\zeta\sigma_2^2=0, \quad \zeta\in\C.
\end{equation}
In particular, the surface $S$ is defined
by equations
$$
\sigma_1=\sigma_2=\sigma_4=0.
$$

\begin{remark}
The quartics given by equation~\eqref{eq:W5-quartics}
were studied in~\cite{Beauville-S6} and~\cite{CheltsovShramov2015}.
\end{remark}

Consider the point
$$P=(1:1:\omega:\omega:\omega^2:\omega^2)\in\P^5,$$
where $\omega$ is a non-trivial cubic root of $1$.
Let $\Xi$ be the $\SS_6$-orbit of the point $P$.
Then one has~\mbox{$|\Xi|=30$}. Choose a subgroup
$\A_5^\prime\subset\SS_6$ that is isomorphic to $\A_5$ but is not conjugate
to the initial $\A_5\subset\SS_6$.
The restriction of the representation $W_5$ of
$\SS_6$ to $\A_5^\prime$ is isomorphic to $I\oplus W_4$, and
one can assume that the coordinates $y_1,\ldots,y_5$ are permuted
by $\A_5^\prime$. In these coordinates
$\Xi$ is the $\A_5^\prime$-orbit of
the point
$$P^\prime=(1:\omega:\omega:\omega^2:\omega^2)\in\P^4,$$
and the quadric $Q$ is given by equation~\eqref{eq:quadric-lambda} with $\lambda=1$.
Therefore, in the notation of~\S\ref{section:proj-perm}
the $\A_5^\prime$-orbit $\Xi$
is identified with $\Sigma_{30}^{a,b}$
for
$$
a=\frac{-2+\sqrt{6}}{4}+\frac{2\sqrt{3}-\sqrt{2}}{4}\ii,\ b= \frac{-2+\sqrt{6}}{4}+\frac{-2\sqrt{3}+\sqrt{2}}{4}\ii,
$$
and the surface $S$ is identified with
$S_{\mu,\nu}$ for
\begin{equation}\label{eq:mu-nu-for-A6}
\mu=-\frac{6+\sqrt{6}}{30},\
\nu=-\frac{3+8\sqrt{6}}{750}.
\end{equation}
In particular, by Corollary~\ref{corollary:classification}
one has $\Sing(S)=\Xi$, and
the singularities of the surface $S$ are ordinary double points.

Let $X$ be a double cover of $Q$ branched over $S$.

\begin{remark}\label{remark:A5-vs-A5}
By Corollary~\ref{corollary:classification} the singularities of $X$ are $30$ ordinary double points.
The threefold $X$
is the unique double quadric with an
action of the group $\A_6$, see Lemma~\ref{lemma:Prokhorov}.
In the notation of~\S\ref{section:proj-perm}
there is an isomorphism $X\cong X_{\mu,\nu}$ for $\mu$ and $\nu$
given by~\eqref{eq:mu-nu-for-A6}, but this isomorphism is
not $\A_5$-equivariant.
\end{remark}

Below we will need the following notation. Consider the weighted projective space
$\P=\P(1^5,2)$ with weighted homogeneous coordinates $y_1,\ldots,y_5,u$, where
$y_i$ have weight~$1$, and $u$ has weight~$2$. The group $\SS_6$ acts on $\P$
so that $H^0(\mathcal{O}_{\P}(1))$ is identified with the $\SS_6$-representation~$W_5$.
Put
$$
y_0=-(y_1+\ldots+y_5),
$$
and define the forms $\sigma_i(y_1,\ldots,y_5)=\sigma_i(y_0,y_1,\ldots,y_5)$
by formula~\eqref{eq:sigma-in-W5}.

\begin{proposition}[{cf.~\cite{Beauville-S6}}]
\label{proposition:Jac-non-trivial}
The intermediate Jacobian of $X$ is not isomorphic to a product of Jacobians
of curves as a principally polarized abelian variety.
In particular, $X$ is non-rational.
\end{proposition}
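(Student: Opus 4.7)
The plan is to follow the approach of \cite{Beauville-S6}: use the $\A_6$-symmetry to show that the intermediate Jacobian of a smooth model of $X$ admits no decomposition as a product of Jacobians of smooth projective curves, and then invoke the Clemens--Griffiths criterion. Since the $30$ singular points of $X$ form a single $\A_6$-orbit of ordinary double points and the defect $\delta=5$ from Table~\ref{table:singularities} is positive, one obtains a small projective resolution $\pi\colon\tilde X\to X$ carrying an induced $\A_6$-action, and $\tilde X$ is birational to $X$, so non-rationality of $\tilde X$ implies non-rationality of $X$. The intermediate Jacobian $J(\tilde X)$ is then a principally polarized abelian variety of dimension $g = h^{1,2}(X_{sm})-(|\Sing X|-\delta)$, where $X_{sm}$ is a generic smooth member of the deformation family, equipped with a polarization-preserving $\A_6$-action.

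First I would compute the character of the induced $\A_6$-representation on $H^3(\tilde X,\Q)\cong H^1(J(\tilde X),\Q)$ by applying the equivariant Lefschetz fixed-point formula to representatives of each conjugacy class of $\A_6$ acting on $\tilde X$. The fixed loci of these elements on $\tilde X$ can be read off from their fixed loci on $\P(W_5)$, on the quadric $Q$, and on the branch surface $S$, all of which are explicit from the known character of $W_5$ together with the defining equations $\sigma_1=\sigma_2=0$ and $\sigma_4-\zeta\sigma_2^2=0$; combined with the identification $\Sing X=\Xi$, this determines the character of $H^3(\tilde X,\Q)$ completely.

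Next, I would assume for contradiction that $J(\tilde X)\cong\prod_i J(C_i)^{m_i}$ as principally polarized abelian varieties, with pairwise non-isomorphic smooth projective curves $C_i$. Since $\A_6$ is simple, it must permute the isotypic blocks of this decomposition, and Torelli's theorem lifts the induced action on each $J(C_i)$ to an action of the stabilizer $H\subset\A_6$ on $C_i$ (up to the hyperelliptic involution). The $\A_6$-representation on $H^1\bigl(\prod J(C_i)^{m_i},\Q\bigr)$ must therefore be a direct sum of induced representations of the form $\mathrm{Ind}_H^{\A_6}H^1(C,\Q)$ coming from smooth $H$-curves $C$; using the list of subgroups of $\A_6$, the Hurwitz bound, and symplecticity with respect to the intersection form, one then compares this with the character computed in the first step and eliminates every admissible decomposition.

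The main obstacle will be this final case analysis, in particular ruling out decompositions whose elliptic factors carry a trivial $\A_6$-action; this is excluded by showing that $H^3(\tilde X,\Q)$ contains no trivial $\A_6$-subrepresentation, which can be traced back to the irreducibility of $W_5$ and the dimension counts~\eqref{eq:quadrics-in-W5}. Once non-decomposability of $J(\tilde X)$ as a product of Jacobians of curves is established, the Clemens--Griffiths criterion yields non-rationality of $\tilde X$, and hence of $X$.
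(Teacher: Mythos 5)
Your overall strategy (equivariant Hodge theory, then the Clemens--Griffiths criterion plus a case analysis of product decompositions) is sound in outline, but it is a genuinely different and far more laborious route than the one the paper takes. The paper never computes the character of $H^3$ directly. It writes $X$ in $\P(1^5,2)$ as $\sigma_2=u^2-\sigma_4=0$ and embeds it in the one-parameter family $\sigma_2-\theta u=u^2-\sigma_4=0$, whose fibers for $\theta\neq 0$ are exactly the $\SS_6$-symmetric quartic threefolds $\sigma_4-\theta^{-2}\sigma_2^2=0$ treated in \cite{Beauville-S6}. All fibers are singular along the same $\SS_6$-orbit of $30$ nodes, so blowing up that locus gives a simultaneous resolution; since Beauville proved $H^{2,1}\cong W_5$ for the quartic fibers and representations of a finite group are rigid in families, the same holds at $\theta=0$, and the elimination of product-of-Jacobians decompositions is then quoted from \cite{Beauville}. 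Note also that the paper works with the full $\SS_6$-action, which this presentation provides for free, rather than only with $\A_6$.

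Two concrete problems with your version. First, the claim that positive defect yields a small \emph{projective} resolution \emph{carrying an induced $\A_6$-action} is unjustified: the standard construction of a projective small resolution takes $\mathrm{Proj}$ of the algebra of a Weil divisor class that is not $\Q$-Cartier, and for this to be equivariant one needs such a class to be $\A_6$-invariant, i.e.\ a trivial summand inside the $5$-dimensional defect part of $\Cl(X)\otimes\Q$; nothing you have said produces one, and in this example that part is expected to be a non-trivial irreducible representation. The fix is easy --- use the canonical blow-up of the $30$ nodes, which is automatically equivariant and projective and has the same $H^3$ (hence the same intermediate Jacobian) as any small resolution --- but as written the step does not stand. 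Second, the heart of your argument, namely the Lefschetz fixed-point computation of the character of $H^3(\tilde X,\Q)$ and the subsequent elimination of every admissible decomposition, is only announced, not carried out; in particular, the assertion that the absence of trivial subrepresentations ``can be traced back to'' the dimension counts~\eqref{eq:quadrics-in-W5} is not an argument, since those counts concern $\Sym^k(W_5^\vee)$ rather than the middle cohomology of the threefold. Until the character is actually computed and shown to give $H^{2,1}\cong W_5$, the proof is incomplete; the paper's degeneration to Beauville's quartics is precisely the device that makes this computation unnecessary.
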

\begin{proof}
One can write equations of the threefold $X$ in $\P$ as
\begin{equation}\label{eq:theta-0}
\sigma_2=u^2-\sigma_4=0.
\end{equation}
Consider the equations
\begin{equation}\label{eq:theta}
\sigma_2-\theta u=u^2-\sigma_4=0, \quad \theta\in\C.
\end{equation}
If $\theta\neq 0$, we can rewrite~\eqref{eq:theta}
as
\begin{equation}\label{eq:theta-ne-0}
u-\theta^{-1}\sigma_2=\sigma_4-\theta^{-2}\sigma_2^2=0.
\end{equation}
The latter equations define a threefold $X_{\theta}$ that is isomorphic
to a quartic given by~\eqref{eq:W5-quartics} for $\zeta=\theta^{-2}$.
On the other hand, if $\theta=0$, then~\eqref{eq:theta}
is rewritten as~\eqref{eq:theta-0}. This shows that the
quartic threefolds given by~\eqref{eq:W5-quartics} or~\eqref{eq:theta-ne-0}
degenerate to the double quadric~$X$.
All varieties $X_{\theta}$, $\theta\neq 0$, and $X$ are $\SS_6$-invariant.
Note that the singularities of a general threefold $X_{\theta}$, as well
as the singularities of $X$, are $30$ ordinary double points
that form the $\SS_6$-orbit $\Xi''$ of the point $P''\in\P$ with coordinates
$$
y_1=1,\ y_2=y_3=\omega, \ y_4=y_5=\omega^2,\ u=0,
$$
see~\cite{Beauville-S6} for details.

Put $\Pi=\P\times\mathbb{A}^1$, and let $\mathcal{X}\subset\Pi$ be
the subvariety
$$
\mathcal{X}=\{(P,\theta)\mid \sigma_2(P)-\theta u=u^2-\sigma_4(P)=0\}.
$$
Let $\pi\colon\Pi\to\mathbb{A}^1$ be the natural projection, and put
$\mathcal{X}_{\theta}=\pi^{-1}(\theta)$. Then $\mathcal{X}_{\theta}\cong X_{\theta}$
for $\theta\neq 0$, and $\mathcal{X}_0\cong X$.

Let $\varphi\colon\tilde{\Pi}\to\Pi$
be the blow up of the locus $\Xi''\times\mathbb{A}^1\subset\Pi$, and
let $\tilde{\mathcal{X}}$ be the proper transform of $\mathcal{X}$
on $\tilde{\Pi}$. Denote by $\tilde{\mathcal{X}}_{\theta}$ the proper transform of
$\mathcal{X}_{\theta}$ on $\tilde{\Pi}$. Then $\tilde{\mathcal{X}}_{\theta}$
is smooth for a general $\theta\in\mathbb{A}^1$, and
$\tilde{\mathcal{X}}_0$ is smooth as well. Put
$$
C=\mathbb{A}^1\setminus\{\theta\mid \tilde{\mathcal{X}}_{\theta} \text{\ is singular}\},
$$
and denote by $\tilde{\mathcal{X}}^o$ the preimage of $C$ in $\tilde{\mathcal{X}}$.

The fibration $\tilde{\mathcal{X}}^o\to C$ defines a vector bundle
$\mathcal{W}\to C$ whose fiber $\mathcal{W}_{\theta}$ over $\theta\in C$
is identified with the vector space $H^{2,1}(\tilde{\mathcal{X}}_{\theta})$,
see e.g.~\cite[\S10.2.1]{Voisin-book-I}.
Moreover, there is a fiberwise action of the group $\SS_6$
on $\tilde{\mathcal{X}}^o$. It gives rise to a fiberwise
action of $\SS_6$ on $\mathcal{W}$. By~\cite[Proposition]{Beauville-S6}
one has $\mathcal{W}_{\theta}\cong W_5$ for $\theta\neq 0$.
Note that representations of finite groups do not vary in families,
since there is only a finite number of (characters of) representations of
fixed dimension of a given group. Therefore,
we have $\mathcal{W}_0\cong W_5$. Now the assertion of the proposition follows
by an argument of~\cite[\S3]{Beauville}.
\end{proof}

\begin{remark}
An alternative proof of Proposition~\ref{proposition:Jac-non-trivial}
could be given following the method of~\cite{Beauville-S6} step by step, since
the singular loci of the variety $X$ and the quartics considered in~\cite[\S2]{Beauville},
i.e. the quartics given by~\eqref{eq:W5-quartics},
arise from the same $\SS_6$-orbit in~$\P^4$.
\end{remark}

\begin{remark}
Recall from~\eqref{eq:Sym2}
that there is a unique $\A_5$-subrepresentation isomorphic to $W_5$
in $\Sym^2(W_4^\vee)$.
Thus we can conclude from~\eqref{eq:quadrics-in-W5} that $X$
is a double cover of the $\A_5$-invariant quadric $Q$ branched over an
intersection with the determinantal quartic~\mbox{$\Delta\subset\P\big(\Sym^2(W_4^\vee)\big)$}.
However, the approach of~\S\ref{section:Artin-Mumford}
is not applicable to prove non-rationality of~$X$ since its
branch surface~\mbox{$S\subset Q$} has singularities outside the singular locus~\mbox{$\Delta_1\subset\Delta$}, cf. Lemma~\ref{lemma:det-smooth}(ii).
\end{remark}

Proposition~\ref{proposition:Jac-non-trivial}
proves Theorem~\ref{theorem:main}(iv).
Also, Lemma~\ref{lemma:Prokhorov}, Remark~\ref{remark:A5-vs-A5},
and Proposition~\ref{proposition:Jac-non-trivial}
imply Proposition~\ref{proposition:main}.
Another consequence of Proposition~\ref{proposition:Jac-non-trivial}
is the following result.

\begin{corollary}\label{corollary:non-rationality-30}
The varieties $X_{\mu,\nu}$ with $|\Sing(X_{\mu,\nu})|=30$
listed in Table~\ref{table:singularities}
are non-rational up to a finite number of (possible) exceptions.
\end{corollary}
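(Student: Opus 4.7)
The plan is to deduce non-rationality of a generic member of the $|\Sing|=30$ locus from the non-rationality of $X_{irr}$ (Proposition~\ref{proposition:Jac-non-trivial}) by a standard deformation argument for intermediate Jacobians. First I would organize the $X_{\mu,\nu}$ with $|\Sing(X_{\mu,\nu})|=30$ into a one-parameter family: by Corollary~\ref{corollary:classification} together with Remark~\ref{remark:quartic-curve-2}, all such pairs $(\mu,\nu)$ lie on the irreducible quartic curve
$$\nu=405\mu^4+324\mu^3+99\mu^2+14\mu+\tfrac{3}{4},$$
which also contains the isolated $\SS_5$-symmetric point $(-\tfrac{1}{6},-\tfrac{1}{48})$. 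Off the finite set of non-ODP parameters of column four of Table~\ref{table:singularities}, the singularities of $X_{\mu,\nu}$ are exactly thirty ordinary double points, so one obtains a flat proper family $\mathcal{X}\to B$ of nodal Fano threefolds over a one-dimensional irreducible base $B$.

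The key input is Remark~\ref{remark:A5-vs-A5}: the variety $X_{irr}$ is abstractly isomorphic to $X_{\mu_0,\nu_0}$ for the specific parameters from~\eqref{eq:mu-nu-for-A6}, so $X_{irr}$ appears as a distinguished fiber of $\mathcal{X}\to B$. By Proposition~\ref{proposition:Jac-non-trivial}, the intermediate Jacobian of any small resolution of this fiber is not isomorphic as a principally polarized abelian variety to a product of Jacobians of smooth curves.

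Next, after passing to an \'etale (or analytic) cover of $B$ on which the thirty nodes admit simultaneous small resolutions, I would consider the resulting period map $\Phi\colon B\to\mathcal{A}_g$ sending a parameter to the intermediate Jacobian of the corresponding resolved fiber (here $g$ is the common value of $h^{1,2}$ of the small resolutions, which is fixed along the family). For our range of $g$, the closure $\mathcal{J}\subset\mathcal{A}_g$ of the locus of products of Jacobians of smooth curves is a proper closed subvariety. Since $\Phi(\mu_0,\nu_0)\notin\mathcal{J}$, the image of $\Phi$ is not contained in $\mathcal{J}$, so $\Phi^{-1}(\mathcal{J})$ is a proper closed subset of $B$; since $B$ is a curve, this preimage is finite. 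For every parameter outside this finite set, the Clemens--Griffiths criterion yields non-rationality of the corresponding $X_{\mu,\nu}$.

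The main obstacle is the technical setup for the family of intermediate Jacobians: constructing the simultaneous small resolutions (which are generally only analytic rather than algebraic for nodal Fano threefolds with positive defect), producing an honest period map into a quasi-projective moduli space, and invoking the fact that the closure of the Jacobi locus is a proper subvariety in the relevant $\mathcal{A}_g$. All of these are by now standard tools in the analysis of nodal Fano threefolds, so the argument should reduce to carefully citing such deformation-theoretic facts and combining them with Proposition~\ref{proposition:Jac-non-trivial}.
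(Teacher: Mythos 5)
Your proposal follows essentially the same route as the paper's proof: organize the $30$-nodal threefolds into a one-parameter family over the curve of parameters with $\Sing(S_{\mu,\nu})=\Sigma_{30}^{a,b}$, observe via Remark~\ref{remark:A5-vs-A5} that $X_{irr}$ sits in this family as the fiber over~\eqref{eq:mu-nu-for-A6}, and propagate the conclusion of Proposition~\ref{proposition:Jac-non-trivial} to all but finitely many fibers using the closedness of the locus of products of Jacobians, finishing with Clemens--Griffiths. The only difference is technical packaging: the paper avoids your acknowledged ``main obstacle'' (simultaneous small resolutions being in general only analytic) by instead blowing up the nodes in the family $\mathcal{Z}\to B$ --- which is automatically projective and changes neither $H^3$ nor the rationality question --- and then quoting \cite[Lemme~5.6.1]{Beauville} for the specialization step in place of your period-map argument.
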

\begin{proof}
Every double quadric $X_{\mu,\nu}$ is naturally embedded
into the weighted projective space $\P$, so that $X_{\mu,\nu}\subset\P$
is given by equations
$$
\sigma_2=u^2-\sigma_4(y_1,\ldots,y_4)+4\mu \sigma_3(y_1,\ldots,y_4)\sigma_1(y_1,\ldots,y_4)
+\nu \sigma_1(y_1,\ldots,y_4)^4=0.$$
Define a (locally closed) curve $B\subset\mathrm{Spec}\,\C[\mu,\nu]\cong\mathbb{A}^2$ as
\begin{multline*}
B=\left\{(\mu,\nu)\mid \text{\ there are\ } a,b\in\C\text{\ such that\ }
\Sing(S_{\mu,\nu})=\Sigma_{30}^{a,b}, \right.\\
\left. \text{\ and the singularities of\ } S_{\mu,\nu}
\text{\ are ordinary double points} \right\}.
\end{multline*}
By Remark~\ref{remark:A5-vs-A5} the curve $B$ contains the point
$(\mu,\nu)$ given by~\eqref{eq:mu-nu-for-A6}.

Put $\Upsilon=\P\times B$. Define $\mathcal{Z}\subset\Upsilon$
as
$$
\mathcal{Z}=\{(P,\mu,\nu)\mid P\in X_{\mu,\nu}\}.
$$

Let $\phi\colon\tilde{\Upsilon}\to\Upsilon$
be the blow up of the locus
$$
\Sigma=\{(P,\mu,\nu)\mid P\in\Sing(X_{\mu,\nu})\}\subset\Upsilon,
$$
and let $\tilde{\mathcal{Z}}$ be the proper transform of $\mathcal{Z}$
on $\tilde{\Upsilon}$. Denote by $\tilde{\mathcal{Z}}_{\mu,\nu}$ the fiber of
the natural projection $\tilde{\mathcal{Z}}\to B$ over a point $(\mu,\nu)\in B$.
Then $\tilde{\mathcal{Z}}_{\mu,\nu}$
is smooth.

We see that $\tilde{\mathcal{Z}}$ is a family of resolutions of singularities
of the threefolds $X_{\mu,\nu}$ degenerating
to a resolution of singularities of the threefold~$X$.
On the other hand, we know from Proposition~\ref{proposition:Jac-non-trivial}
that the intermediate Jacobian of $X$ is not isomorphic to a product of Jacobians
of curves as a principally polarized abelian variety.
Now the required assertion follows from~\cite[Lemme~5.6.1]{Beauville} applied to the
family~$\tilde{\mathcal{Z}}$.
\end{proof}

Corollaries~\ref{corollary:non-rationality-apart-of-30}
and~\ref{corollary:non-rationality-30}
prove Theorem~\ref{theorem:main}(ii).

\end{document}